\newtheorem{thm}{Theorem}[section]
\newtheorem{definition}[thm]{Definition}
\newtheorem{lemma}[thm]{Lemma}
\newtheorem{prop}[thm]{Proposition}
\newtheorem{remark}[thm]{Remark}
\newcommand{\R}{\mathbb{R}}
\newcommand{\RR}{\mathbb{R}^{2}}
\newcommand{\RD}{\mathbb{R}^{d}}
\newcommand{\RDD}{\mathbb{R}^{d+1}}
\newcommand{\Hdot}{\dot{H}}
\newcommand{\Us}{\text{U}_{\! \sslash}}
\newcommand{\Umus}{\text{U}^{\mu}_{\! \sslash}}
\newcommand{\psit}{\widetilde{\psi}}
\newcommand{\Hb}{H_{b} \hspace{-0.05cm} \left(\text{div}^{\mu}_{0},\Omega \right)}
\newcommand{\psia}{\psi_{(\alpha)}}
\newcommand{\Umua}{\text{U}^{\mu}_{(\alpha)}}
\newcommand{\lver}{\left\lvert}
\newcommand{\llver}{\left\lvert\left\lvert}
\newcommand{\rver}{\right\rvert}
\newcommand{\rrver}{\right\rvert\right\rvert}
\renewcommand{\footnote}[1]{\textsuperscript{\addtocounter{footnote}{1}(\thefootnote)}\footnotetext{#1}}
\def \epsilon {\varepsilon}
\begin{document}
\title{\textbf{Coriolis effect on water waves}}
\author{Benjamin MELINAND\footnote{IMB, Universit\'e de Bordeaux. Email : benjamin.melinand@math.u-bordeaux.fr}}
\date{October 2015}

\maketitle

\vspace{-1cm}

\begin{abstract}
\noindent This paper is devoted to the study of water waves under the influence of the gravity and the Coriolis force. It is quite common in the physical literature that the rotating shallow water equations are used to study such water waves.  We prove a local wellposedness theorem for the water waves equations with vorticity and Coriolis force, taking into account the dependence on various physical parameters and we justify rigorously the shallow water model. We also consider a possible non constant pressure at the surface that can be used to describe meteorological disturbances such as storms or pressure jumps for instance.
\end{abstract}

\section{Introduction}

\subsection{Presentation of the problem}

\noindent There has been a lot of interest on the Cauchy problem for the irrotational water waves problem since the work of S. Wu (\cite{Wu_2D} and \cite{Wu_3D}). More relevant for our present work is the Eulerian approach developed by D. Lannes (\cite{Lannes_wellposedness_ww}) in the presence of a bottom. Another program initiated by W. Craig (\cite{craig_boussinesq}) consists in justifying the use of the many asymptotic models that exist in the physical literature to describe the motion of water waves. This requires a local wellposedness result that is uniform with respect to the small parameters involved (typically, the shallow water parameter). This was achieved by B. Alvarez-Samaniego and D. Lannes (\cite{Alvarez_Lannes}) for many regimes; other references in this direction are (\cite{schneider_wayne_NLS}, \cite{schneider_wayne_longwave}, \cite{Iguchi_shallow_water}). The irrotational framework is however not always the relevant one. When dealing with wave-current interactions or, at larger scales, if one wants to take into account the Coriolis force. The latter configuration motivates the present study. Several authors considered the local wellposedness theory for the water waves equations in the presence of vorticity (\cite{coutand_shkoller_ww} or \cite{lindblad_ww} for instance). Recently, A. Castro and D. Lannes proposed a generalization of the Zakharov-Craig-Sulem formulation (see \cite{Zakharov}, \cite{Craig_Sulem_1}, \cite{Craig_Sulem_2}, \cite{abz_zaka_euler} for an explanation of this formulation), and gave a system of three equations that allow for the presence of vorticity. Then, they used it to derive new shallow water models that describe wave current interactions and more generally the coupling between waves and vorticity effects  (\cite{Castro_Lannes_vorticity} and \cite{Castro_Lannes_shallow_water}). In this paper, we base our study on their formulation.
\medskip

\noindent This paper is organized in three parts : firstly we derive a generalization of the Castro-Lannes formulation that takes into account the Coriolis forcing as well as non flat bottoms and a non constant pressure at the surface; secondly, we prove a local wellposedness result taking account the dependence of small parameters; Finally, we justify that the rotational shallow water model is a good asymptotic model of the rotational water waves equations under a Coriolis forcing. 
\medskip

\noindent We model the sea by an incompressible ideal fluid bounded from below by the seabed and from above by a free surface. We suppose that the seabed and the surface are graphs above the still water level. The pressure at the surface is of the form $P + P_{\text{ref}}$ where $P(t,\cdot)$ models a meteorological disturbance and $P_{\text{ref}}$ is a constant which represents the pressure far from the meteorological disturbance. We denote by $d$ the horizontal dimension, which is equal to $1$ or $2$. The horizontal variable is $X \in \RD$ and $z \in \mathbb{R}$ is the vertical variable. $H$ is the typical water depth. The water occupies the domain $\Omega_{t} := \{ (X,z) \in \RDD \text{ , } -H + b(X) < z < \zeta (t,X) \}$. The water is homogeneous (constant density $\rho$), inviscid with no surface tension. We denote by $\textbf{U}$ the velocity of the fluid, $\textbf{V}$ is the horizontal component of the velocity and $\textbf{w}$ its vertical component. The water is under the influence of the gravity $\bm{g} = -g \bm{e_{z}}$ and the rotation of the Earth with a rotation vector $\textbf{f} = \frac{f}{2} \bm{e_{z}}$. Finally, we define the pressure in the fluid domain by  $\mathcal{P}$. The equations governing the motion of the surface of an ideal fluid under the influence of gravity and Coriolis force are the free surface Euler Coriolis equations \footnote{We  consider that the centrifugal potential is constant and included in the pressure term.}

\begin{equation}\label{Euler_equations}
\left\{
\begin{aligned}
&\partial_{t} \textbf{U} + \left( \textbf{U} \cdot \nabla_{\! X,z} \right) \textbf{U} + \textbf{f} \times \textbf{U} = - \frac{1}{\rho} \nabla_{\! X,z} \mathcal{P} - g \bm{e_{z}} \text{ in } \Omega_{t},\\
&\text{div} \; \textbf{U} = 0 \text{ in } \Omega_{t},
\end{aligned}
\right.
\end{equation}

\noindent with the boundary conditions

\begin{equation}\label{bound_cond}
\left\{
\begin{aligned}
&\partial_{t} \zeta	- \underline{\text{U}} \cdot N = 0, \\
&\text{U}_{b} \cdot N_{b} = 0,
\end{aligned}
\right.
\end{equation}

\noindent where $N = \begin{pmatrix} - \nabla \zeta \\ 1 \end{pmatrix}$, $N_{b} = \begin{pmatrix} - \nabla b \\ 1 \end{pmatrix}$, $\underline{\text{U}} = \begin{pmatrix} \underline{\text{V}} \\ \underline{\text{w}} \end{pmatrix} = \textbf{U}_{|z=\zeta}$ and $\text{U}_{b} = \begin{pmatrix} \text{V}_{b} \\ \text{w}_{b} \end{pmatrix} = \textbf{U}_{|z=-H+b}.$ 

\medskip

\noindent The pressure $\mathcal{P}$ can be decomposed as the surface contribution and the internal pressure

\begin{equation*}
\mathcal{P}(t,X,z) = P(t,X) + P_{\text{ref}} + \widetilde{\mathcal{P}}(t,X,z), 
\end{equation*}

\noindent with $\widetilde{\mathcal{P}}_{|z=\zeta} = 0$.

\begin{remark}
\noindent In this paper, we identify functions on $\R^{2}$ as function on $\R^{3}$. Then, the gradient, the curl and the divergence operators become in the two dimensional case

\upshape
\begin{equation*}
\nabla_{\! X,z} f = \begin{pmatrix}  \partial_{x} f \\ 0 \\ \partial_{z} f \end{pmatrix} \text{, } \text{curl} \; \textbf{A} = \begin{pmatrix}  -\partial_{z} \textbf{A}_{2} \\ \partial_{z} \textbf{A}_{1} - \partial_{x} \textbf{A}_{3} \\ -\partial_{x} \textbf{A}_{2} \end{pmatrix} \text{, } \text{div} \; \textbf{A} = \partial_{x} \textbf{A}_{1} + \partial_{z} \textbf{A}_{3}.
\end{equation*}
\itshape
\end{remark}

\noindent In order to obtain some asymptotic models we nondimensionalize the previous equations. There are five important physical parameters : the typical amplitude of the surface $a$, the typical amplitude of the bathymetry $a_{\text{bott}}$, the typical horizontal scale $L$, the characteristic water depth $H$ and the typical Coriolis frequency $f$. Then we can introduce four dimensionless parameters 

\begin{equation}\label{parameters}
\epsilon =\frac{a}{H} \text{, } \beta = \frac{a_{\text{bott}}}{H} \text{, } \mu = \frac{H^{2}}{L^{2}} \text{ and } \text{Ro} = \frac{a}{f L} \sqrt{\frac{g}{H}}, 
\end{equation}

\noindent where $\epsilon$ is called the nonlinearity parameter, $\beta$ the bathymetric parameter, $\mu$ the shallowness parameter and $\text{Ro}$ the Rossby number. We also nondimensionalize the variables and the unknowns. We introduce (see \cite{Lannes_ww} and \cite{my_proud_res} for instance for an explanation of this nondimensionalization)

\begin{equation}\label{nondimens_variables}
 \left\{ 
 \begin{aligned}
 &X' = \frac{X}{L} \text{, } z' = \frac{z}{H} \text{, } \zeta' = \frac{\zeta}{a} \text{, } b' = \frac{b}{a_{\text{bott}}} \text{, } t' = \frac{\sqrt{gH}}{L} t \text{, }\\
 &\textbf{V}' = \sqrt{\frac{H}{g}} \frac{\textbf{V}}{a} \text{, } \textbf{w}'= H \sqrt{\frac{H}{g}} \frac{\textbf{w}}{aL} \text{, } P' = \frac{P}{\rho g a} \text{ and } \widetilde{\mathcal{P}}^{\prime} = \frac{\widetilde{\mathcal{P}}}{\rho g H}.
 \end{aligned}
 \right.
\end{equation}

\noindent In this paper, we use the following notations

\begin{equation}\label{nondim_op}
\nabla^{\mu}_{\! X',z'} = \begin{pmatrix} \sqrt{\mu} \nabla_{\! X'} \\ \partial_{z'} \end{pmatrix} \text{, } \text{curl}^{\mu} = \nabla^{\mu}_{\! X',z'} \times \text{, } \text{div}^{\mu} = \nabla^{\mu}_{\! X',z'} \cdot .
\end{equation}
 
\noindent We also define 

\begin{equation}\label{nondim_U}
\textbf{U}^{\mu} = \begin{pmatrix} \sqrt{\mu} \textbf{V}' \\ \textbf{w}' \end{pmatrix} \text{, } \bm{\omega}'=\frac{1}{\mu} \text{curl}^{\mu} \textbf{U}^{\mu} \text{, } \underline{\text{U}}^{\mu} = \textbf{U}^{\mu}_{|z'= \epsilon \zeta'} \text{, } \text{U}^{\mu}_{b} = \textbf{U}^{\mu}_{|z'=-1+ \beta b'},
\end{equation}

\noindent and 

\begin{equation}
 N^{\mu} =  \begin{pmatrix} - \epsilon \sqrt{\mu} \nabla \zeta' \\ 1 \end{pmatrix} \text{, } N_{\! b}^{\mu} = \begin{pmatrix} - \beta \sqrt{\mu} \nabla b' \\ 1 \end{pmatrix}.
\end{equation}

\noindent Notice that our nondimensionalization of the vorticity allows us to consider only weakly sheared flows (see \cite{Castro_Lannes_shallow_water}, \cite{teshukov_shear_flows}, \cite{richard_gravi_shear_flows}). The nondimensionalized fluid domain is

\begin{equation}
\Omega^{\prime}_{t'} := \{ (X',z') \in \RDD \text{ , } -1 + \beta b'(X') < z' < \epsilon \zeta' (t',X') \}.
\end{equation}

\noindent Finally, if $\textbf{V}  = \begin{pmatrix} \text{V}_{1} \\ \text{V}_{2} \end{pmatrix} \in \RR$, we define $\textbf{V}$ by $\textbf{V}^{\perp} = \begin{pmatrix} -\text{V}_{2} \\ \text{V}_{1} \end{pmatrix}$. Then, the Euler Coriolis equations \eqref{Euler_equations} become 

\small
\begin{equation}\label{nondim_Euler_equations}
\left\{
\begin{aligned}
&\partial_{t'} \textbf{U}^{\mu} + \frac{\epsilon}{\mu} \left( \textbf{U}^{\mu} \cdot \nabla_{\! X',z'}^{\mu} \right) \textbf{U}^{\mu} + \frac{\epsilon \sqrt{\mu}}{\text{Ro}} \begin{pmatrix} \;\; \textbf{V}^{\prime \perp} \\ 0 \end{pmatrix} = - \sqrt{\mu} \begin{pmatrix} \nabla \! P' \\ 0 \end{pmatrix} - \frac{1}{\epsilon} \nabla_{\! X',z'}^{\mu} \widetilde{\mathcal{P}}^{\prime} - \frac{1}{\epsilon} \bm{e_{z}} \text{ in } \Omega^{\prime}_{t},\\
&\text{div}^{\mu}_{\! X' \! ,z'} \; \textbf{U}^{\mu} = 0 \text{ in } \Omega^{\prime}_{t},
\end{aligned}
\right.
\end{equation}
\normalsize

\noindent with the boundary conditions

\begin{equation}\label{nondim_bound_cond}
\left\{
\begin{aligned}
&\partial_{t'} \zeta'	- \frac{1}{\mu} \underline{\text{U}}^{\mu} \cdot N^{\mu} = 0, \\
&\text{U}^{\mu}_{b} \cdot N_{b}^{\mu} = 0.
\end{aligned}
\right.
\end{equation}

\noindent In the following we omit the primes. In \cite{Castro_Lannes_vorticity}, A. Castro and D. Lannes derived a formulation of the water waves equations with vorticity. We outline the main ideas of this formulation and extend it to take into account the Coriolis force; even in absence of Coriolis forcing, our results extend the result of \cite{Castro_Lannes_vorticity} by allowing non flat bottoms. First, applying  the $\text{curl}^{\mu}$ operator to the first equation of \eqref{nondim_Euler_equations} we obtain an equation on $\bm{\omega}$

\begin{equation}\label{vorticity_eq}
\partial_{t} \bm{\omega} + \frac{\epsilon}{\mu} \left(\textbf{U}^{\mu} \cdot \nabla^{\mu}_{\! X,z} \right) \bm{\omega} = \frac{\epsilon}{\mu} \bm{\omega} \cdot \nabla^{\mu}_{\; X,z} \textbf{U}^{\mu} + \frac{\epsilon}{\mu \text{Ro}} \partial_{z} \textbf{U}^{\mu}.
\end{equation}

\noindent Furthermore, taking the trace at the surface of the first equation of \eqref{nondim_Euler_equations} we get

\begin{equation}\label{eq_int1}
\partial_{t} \underline{\text{U}}^{\mu} + \epsilon \left( \underline{\text{V}} \cdot \nabla_{\! X} \right) \underline{\text{U}}^{\mu} + \frac{\epsilon \sqrt{\mu}}{\text{Ro}} \begin{pmatrix} \;\;  \underline{\text{V}}^{\perp} \\ 0 \end{pmatrix} = - \sqrt{\mu} \begin{pmatrix} \nabla \! P \\ 0  \end{pmatrix} - \frac{1}{\epsilon} \begin{pmatrix} 0 \\ 1  \end{pmatrix} - \frac{1}{\epsilon} \left( \partial_{z} \widetilde{\mathcal{P}} \right)_{|z=\epsilon \zeta} N^{\mu}.
\end{equation}

\noindent Then, in order to eliminate the term $\left( \partial_{z} \mathcal{P} \right)_{|z=\epsilon \zeta} N^{\mu}$, we have to introduce the following quantity. If $\textbf{A}$ is a vector field on $\Omega_{t}$, we define $\text{A}_{\sslash}$ as

\begin{equation*}
\text{A}_{\sslash} = \frac{1}{\sqrt{\mu}} \underline{\text{A}}_{h} + \epsilon \underline{\text{A}}_{v} \nabla \zeta,
\end{equation*}

\noindent where $\text{A}_{h}$ is the horizontal component of $\text{A}$, $\text{A}_{v}$ its vertical component, $\underline{\text{A}}  = \textbf{A}_{|z=\epsilon \zeta}$  and $\text{A}_{b} = \textbf{A}_{|z=-1+\beta b}$. Notice that,

\begin{equation}\label{Apara_vectorial}
\underline{\text{A}} \times N^{\mu} = \sqrt {\mu} \begin{pmatrix} - \text{A}_{\sslash}^{\perp} \\ -\epsilon \sqrt{\mu} \text{A}_{\sslash}^{\perp} \cdot \nabla  \zeta \end{pmatrix}.
\end{equation}

\noindent Therefore, taking the orthogonal of the horizontal component of the vectorial product of \eqref{eq_int1} with $N^{\mu}$ we obtain

\begin{equation}\label{eq_int2}
\partial_{t} \Umus +  \nabla \zeta + \frac{\epsilon}{2} \nabla \left\lvert \Umus \right\rvert^{2} - \frac{\epsilon}{2 \mu} \nabla \left[ \left( 1 +  \epsilon^{2} \mu \left\lvert \nabla \zeta\right\rvert^{2} \right) \underline{\text{w}}^{2} \right] + \epsilon \left( \underline{\omega} \hspace{-0.05cm} \cdot \hspace{-0.05cm} N^{\mu} \hspace{0.05cm} + \frac{1}{\text{Ro}} \right) \underline{\text{V}}^{\perp} = - \nabla P.
\end{equation}

\noindent Since $\Umus$ is a vector field on $\mathbb{R}^{2}$, we have the classical Hodge-Weyl decomposition

\begin{equation}\label{decomposition_Umus}
\Umus = \nabla \frac{\nabla}{\Delta} \cdot \Umus + \nabla^{\perp} \frac{\nabla^{\perp}}{\Delta} \cdot \Umus.
\end{equation}

\noindent In the following we denote by $\psi := \frac{\nabla}{\Delta} \cdot \Us$ and $\psit := \frac{\nabla^{\perp}}{\Delta} \cdot \Us$ \footnote{We define rigorously these operators in the next section.}. Applying the operator $\frac{\nabla}{\Delta} \cdot$ to \eqref{eq_int2}, we obtain

\begin{equation}
\partial_{t} \psi + \zeta + \frac{\epsilon}{2} \left\lvert \Umus \right\rvert^{2} - \frac{\epsilon}{2 \mu} \left( 1 + \epsilon^{2} \mu \left\lvert \nabla \zeta \right\rvert^{2} \right) \! \underline{\text{w}}^{2} + \epsilon \frac{\nabla}{\Delta} \cdot \left[ \left( \underline{\omega} \cdot N^{\mu} + \frac{1}{\text{Ro}} \right) \underline{\text{V}}^{\perp} \right] = -P.
\end{equation}

\noindent Moreover, using the following vectorial identity

\begin{equation}\label{Uss_curl}
\left( \nabla^{\mu}_{\! X,z} \times \textbf{U}^{\mu} \right)_{|z=\epsilon \zeta} \cdot N^{\mu} = \mu \nabla^{\perp} \cdot \Umus,
\end{equation}

\noindent we have

\begin{equation}
\Delta \psit = \left( \underline{\omega} \cdot N^{\mu} \right).
\end{equation}

\noindent We can now give the nondimensionalized Castro-Lannes formulation of the water waves equations with vorticity in the presence of Coriolis forcing. It is a system of three equations for the unknowns $\left( \zeta,\psi, \bm{\omega} \right)$

\small
\begin{equation}\label{Castro_lannes_formulation}
\left\{
\begin{aligned}
&\hspace{-0.05cm} \partial_{t} \zeta - \frac{1}{\mu} \underline{\text{U}}^{\mu} \cdot N^{\mu} = 0,\\
&\hspace{-0.05cm} \partial_{t} \psi \hspace{-0.05cm} + \hspace{-0.05cm} \zeta \hspace{-0.05cm} + \hspace{-0.05cm} \frac{\epsilon}{2} \hspace{-0.05cm} \left\lvert  \text{U}^{\mu}_{\! \sslash} \right\rvert^{2} \hspace{-0.15cm} - \hspace{-0.05cm} \frac{\epsilon}{2 \mu} \hspace{-0.05cm} \left(\hspace{-0.05cm} 1 + \hspace{-0.05cm} \epsilon^{2} \mu \left\lvert \nabla \zeta \right\rvert^{2} \hspace{-0.05cm} \right) \! \underline{\text{w}}^{2} \hspace{-0.1cm} + \hspace{-0.1cm} \epsilon \frac{\nabla}{\Delta} \hspace{-0.1cm} \cdot \hspace{-0.1cm} \left[ \left( \underline{\omega} \hspace{-0.05cm} \cdot \hspace{-0.05cm} N^{\mu} \hspace{0.05cm} + \frac{1}{\text{Ro}} \right) \underline{\text{V}}^{\perp} \right] = - P,\\
&\hspace{-0.05cm} \partial_{t} \bm{\omega} \hspace{-0.05cm} + \hspace{-0.05cm} \frac{\epsilon}{\mu} \hspace{-0.05cm} \left( \hspace{-0.05cm} \textbf{U}^{\mu} \hspace{-0.05cm} \cdot \hspace{-0.05cm} \nabla^{\mu}_{\! X,z} \hspace{-0.05cm} \right) \bm{\omega} \hspace{-0.05cm} = \hspace{-0.05cm} \frac{\epsilon}{\mu} \left( \bm{\omega} \cdot \nabla^{\mu}_{\! X,z} \right) \textbf{U}^{\mu} \hspace{-0.05cm} + \hspace{-0.05cm} \frac{\epsilon}{\mu \text{Ro}} \partial_{z} \textbf{U}^{\mu},
\end{aligned}
\right.
\end{equation}
\normalsize

\noindent where $\textbf{U}^{\mu} := \textbf{U}^{\mu}[\epsilon \zeta, \beta b](\psi,\bm{\omega})$ is the unique solution in $H^{1}(\Omega_{t})$ of

\begin{equation}\label{div_curl_formulation}
\left\{
\begin{aligned}
&\text{curl}^{\mu} \; \textbf{U}^{\mu} = \mu \bm{\omega} \text{ in } \Omega_{t},\\
&\text{div}^{\mu} \; \textbf{U}^{\mu} = 0 \text{ in } \Omega_{t},\\
&\Us^{\mu} = \nabla \psi + \frac{\nabla^{\perp}}{\Delta} \left( \underline{\omega} \cdot N^{\mu} \right),\\
&\text{U}_{b}^{\mu} \cdot N^{\mu}_{b} = 0.
\end{aligned}
\right.
\end{equation}

\noindent We add a technical assumption. We assume that the water depth is bounded from below by a positive constant

\begin{equation}\label{nonvanishing}
  \exists \, h_{\min} > 0 \text{ ,  } \epsilon \zeta + 1 - \beta b \geq h_{\min}.
\end{equation} 

\noindent We also suppose that the dimensionless parameters satisfy

\begin{equation}\label{constraints_parameters}
\exists \mu_{\max} \text{, } 0 < \mu \leq \mu_{\max} \text{, } 0 < \epsilon \leq 1 \text{, } 0 < \beta \leq 1 \text{ and } \frac{\epsilon}{\text{Ro}} \leq 1.
\end{equation}

\begin{remark}
\noindent The assumption $ \epsilon \leq \text{Ro}$ is equivalent to $fL \leq \sqrt{gH}$. This means that the typical rotation speed due to the Coriolis force is less than the typical water wave celerity. For water waves, this assumption is common (see for instance \cite{pedlosky}). Typically for offshore long water waves at mid-latitudes, we have $L=100 \text{km}$ and $H=1 \text{km}$ and  $f=10^{-4} \text{Hz}$. Then, $\frac{\epsilon}{\text{Ro}} = 10^{-1}$.
\end{remark}

\subsection{Existence result}

\noindent In this part, we give our main result. It is a wellposedness result for the system \eqref{Castro_lannes_formulation_straight} which is a straightened system of the Castro-Lannes formulation.  This result extends Theorem 4.7 and Theorem 5.1 in \cite{Castro_Lannes_vorticity} by adding a non flat bottom and a Coriolis forcing. We define the energy $\mathcal{E}^{N}$ in Subsection \ref{framework_ww}.

\begin{thm}
\noindent Assume that the initial data, $b$ and $P$ are smooth enough and the initial vorticity is divergent free. Assume also that Conditions \eqref{nonvanishing} and \eqref{rayleigh_taylor_assumption} are satisfied initially. Then, there exists \upshape$T > 0$, \itshape and a unique solution to the water waves equations \eqref{Castro_lannes_formulation_straight} on $[0,T]$. Moreover, 

\upshape
\begin{equation*}
T = \min \left( \frac{T_{0}}{\max(\epsilon, \beta, \frac{\epsilon}{\text{Ro}})}, \frac{T_{0}}{ \left\lvert \nabla P \right\rvert_{L^{\infty}_{t} H_{\! X}^{N}}} \right) \text{ , }  \frac{1}{T_{0}} =c^{1} \text{ and  } \underset{t \in [0,T]}{\sup} \mathcal{E}^{N} \hspace{-0.1cm} \left( \zeta(t), \psi(t), \omega(t) \right) = c^{2},
\end{equation*}
\itshape

\noindent where \upshape$c^{j}$ \itshape is a constant which depends on the initial conditions, $P$ and $b$. 
\end{thm}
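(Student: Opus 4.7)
The plan is to follow the classical energy-method approach for quasilinear problems in the spirit of Alvarez-Samaniego--Lannes and Castro-Lannes: derive a priori estimates for the straightened system \eqref{Castro_lannes_formulation_straight} in the energy $\mathcal{E}^N$, uniform in $(\epsilon,\beta,\mu,1/\text{Ro})$, and then construct the solution by a standard regularization/fixed-point scheme. Since the time of existence must be uniform in the small parameters, every constant appearing in the estimates has to be made explicit in these parameters, and the Coriolis number $1/\text{Ro}$ must be tracked through every step.

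The core of the argument is the high-order energy estimate. For multi-indices $\alpha$ with $\lvert\alpha\rvert\le N$, I would introduce good unknowns of Alinhac type $\zeta_{(\alpha)}=\partial^{\alpha}\zeta$ and $\psia=\partial^{\alpha}\psi-\epsilon\underline{\text{w}}\,\partial^{\alpha}\zeta$, so that when one differentiates \eqref{Castro_lannes_formulation_straight} and substitutes the elliptic expression of $\Umus$ coming from \eqref{div_curl_formulation}, the principal part becomes a dispersive wave system of the form
\begin{equation*}
\partial_{t}\zeta_{(\alpha)}-\tfrac{1}{\mu}\mathcal{G}^{\mu}\psia=\text{tame},\qquad
\partial_{t}\psia+\mathfrak{a}\,\zeta_{(\alpha)}+\tfrac{\epsilon}{\text{Ro}}\tfrac{\nabla}{\Delta}\cdot\underline{\text{V}}^{\perp}_{(\alpha)}=\text{tame},
\end{equation*}
coupled with a transport equation $\partial_{t}\partial^{\alpha}\bm{\omega}+\tfrac{\epsilon}{\mu}(\textbf{U}^{\mu}\cdot\nabla^{\mu}_{\! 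X,z})\partial^{\alpha}\bm{\omega}=\text{tame}$, where $\mathcal{G}^{\mu}$ is the Dirichlet--Neumann-like operator attached to \eqref{div_curl_formulation} and $\mathfrak{a}$ is the Rayleigh--Taylor coefficient whose positivity is exactly \eqref{rayleigh_taylor_assumption}. Pairing the first line with $\mathfrak{a}\,\zeta_{(\alpha)}$ and $\tfrac{1}{\mu}\mathcal{G}^{\mu}\psia$ respectively and the vorticity equation with $\partial^{\alpha}\bm{\omega}$ in $L^{2}(\Omega_{t})$, the cross terms cancel after symmetrization, the Coriolis contribution $\tfrac{\epsilon}{\text{Ro}}\tfrac{\nabla}{\Delta}\cdot\underline{\text{V}}^{\perp}$ contributes only an antisymmetric nonlocal operator at top order (hence vanishing) plus $O(\epsilon/\text{Ro})$ tame terms, and one is led to $\tfrac{d}{dt}\mathcal{E}^{N}\le C(\mathcal{E}^{N},h_{\min}^{-1},\lvert b\rvert_{H^{N+2}})\bigl(\max(\epsilon,\beta,\epsilon/\text{Ro})\mathcal{E}^{N}+\lvert\nabla P\rvert_{H^{N}}\bigr)$, which yields precisely the existence time $T$ announced in the theorem.

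The hardest step, and the one I expect to spend most of the technical effort on, is the nonlocal, variable-coefficient div-curl problem \eqref{div_curl_formulation}. One must establish, uniformly in the shape parameters $(\epsilon\zeta,\beta b)$ and in the shallowness parameter $\mu$, elliptic estimates that control $\textbf{U}^{\mu}$, its trace $\Umus$ and the surface quantities $\underline{\text{V}}$, $\underline{\text{w}}$, $\underline{\bm{\omega}}\cdot N^{\mu}$ in Sobolev spaces, together with commutator estimates between $\partial^{\alpha}$ and $\mathcal{G}^{\mu}$ in the perturbed domain. This extends the estimates of Castro--Lannes in two respects: the bottom is non-flat, which forces a preliminary diffeomorphic straightening of $\Omega_{t}$ and a careful treatment of the boundary condition $\text{U}^{\mu}_{b}\cdot N^{\mu}_{b}=0$; and the shape regularity must be tracked with explicit $\mu$-weights, so that the estimates survive the $\mu\to0$ shallow-water limit needed later in the paper.

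To promote the a priori estimate into actual existence, I would regularize the nonlinear terms by Friedrichs mollifiers $J_{\delta}$, solve the mollified system by a Banach fixed-point argument in a ball of $L^{\infty}_{t}\mathcal{E}^{N}$-functions on a common time interval $[0,T_{\delta}]$, observe that $T_{\delta}$ admits a lower bound $T$ independent of $\delta$ thanks to the uniform energy estimate, and pass to the limit $\delta\to 0$ by strong compactness in a lower-regularity norm. Uniqueness is obtained by running the same energy estimate at order $N-1$ on the difference of two solutions. The constraint $\text{div}^{\mu}\bm{\omega}=0$ propagates in time because applying $\text{div}^{\mu}$ to \eqref{vorticity_eq} yields a transport equation for $\text{div}^{\mu}\bm{\omega}$, and the preservation of \eqref{nonvanishing} and \eqref{rayleigh_taylor_assumption} on $[0,T]$ is enforced by choosing $T_{0}$ small enough that the time-derivatives of $h_{\min}$ and $\mathfrak{a}$ controlled by $\mathcal{E}^{N}$ cannot destroy the initial lower bounds.
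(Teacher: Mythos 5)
Your overall strategy is the one the paper follows: differentiate the straightened system, introduce Alinhac good unknowns, derive a tame energy estimate for $\mathcal{E}^{N}$ with explicit dependence on $(\epsilon,\beta,\mu,1/\text{Ro})$, then regularize and pass to the limit. The paper's actual proof of this theorem (Theorem \ref{existence}) is a pointer to the Castro--Lannes scheme together with Theorem \ref{energy_estimate}, so the substance coincides with what you outline, including the propagation of $\text{div}^{\mu}\bm{\omega}=0$ and of the depth and Rayleigh--Taylor lower bounds, and uniqueness via stability of the div-curl solution operator.

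There is, however, one ingredient missing from your sketch that is not cosmetic: the energy functional $\mathcal{E}^{N}$ must contain the term $\tfrac12\lvert\tfrac{1}{\mathfrak{P}}(\omega_{b}\cdot N^{\mu}_{b})\rvert_{2}^{2}$. For a non-flat bottom, the high-order elliptic estimates for the div-curl system (Theorem \ref{high_order_estimate_U}) involve $\lvert\tfrac{\Lambda^{k}}{\mathfrak{P}}(\omega_{b}\cdot N^{\mu}_{b})\rvert_{2}$ on the right-hand side, and Remark \ref{control_omega_bott} reduces it to $\lvert\tfrac{1}{\mathfrak{P}}(\omega_{b}\cdot N^{\mu}_{b})\rvert_{2}$ plus vorticity norms already in the energy. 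This $H^{-1/2}_{\ast}$-type quantity is controllable in time only because the trace of the vorticity equation at the bottom yields a closed transport equation for $\omega_{b}\cdot N^{\mu}_{b}$ (Proposition \ref{vorticity_estimate}); without adding it to the energy, your scheme does not close once $b\neq 0$. Your paragraph on the div-curl problem acknowledges the boundary condition at the bottom but does not identify this quantity, which is the precise new invariant the non-flat bottom forces you to track. Two lesser remarks: the paper's symmetrization tests the second quasilinearized equation against $\tfrac{1}{\mu}\partial_{k}\underline{\text{U}}^{\mu}_{(\gamma)}\cdot N^{\mu}$ (the normal velocity built from the \emph{vector} good unknown $\text{U}^{\mu}_{(\gamma)}$) and passes to a volume integral via Green's identity and a modified energy $\mathcal{F}^{N}$, rather than literally pairing with $\tfrac{1}{\mu}\mathcal{G}^{\mu}\psia$ -- in the rotational case $\underline{\text{U}}^{\mu}\cdot N^{\mu}\neq\mathcal{G}^{\mu}\psi$, so the scalar-$\mathcal{G}^{\mu}$ pairing would push the vorticity-induced flux into the remainder and has to be justified separately; and the Coriolis term is not handled by an antisymmetry cancellation in the paper, it is simply bounded by $C\,\tfrac{\epsilon}{\text{Ro}}\,\mathcal{E}^{N}$, which suffices since $\epsilon/\text{Ro}\le 1$.
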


\noindent A full version is given in Subsection \ref{Existence_result}. This theorem allows us to investigate the justification of asymptotic models in the presence of a Coriolis forcing. In the case of a constant pressure at the surface and without a Coriolis forcing, our existence time is similar to Theorem 3.16 in \cite{Lannes_ww} (see also \cite{Alvarez_Lannes}); without a Coriolis forcing, it is as Theorem 2.3 in \cite{my_proud_res}. 

\subsection{Notations}

\noindent - If $\textbf{A} \in \R^{3}$, we denote by $\textbf{A}_{h}$ its horizontal component and by  $\textbf{A}_{v}$ its vertical component. 

\medskip

\noindent - If $\textbf{V}  = \begin{pmatrix} \text{V}_{1} \\ \text{V}_{2} \end{pmatrix} \in \RR$, we define the orthogonal of $\textbf{V}$ by $\textbf{V}^{\perp} = \begin{pmatrix} -\text{V}_{2} \\ \text{V}_{1} \end{pmatrix}$.

\medskip

\noindent - In this paper, $C \left( \cdot \right)$ is a nondecreasing and positive function whose exact value has non importance.

\medskip

\noindent - Consider a vector field $\textbf{A}$ or a function $\textbf{w}$ defined on $\Omega$. Then, we denote $\text{A} = \textbf{A} \circ \Sigma$ and $\text{w} = \textbf{w} \circ \Sigma$, where $\Sigma$ is defined in \eqref{diffeo}. Furthermore, we denote $\underline{\text{A}} = \textbf{A}_{|z=\epsilon\zeta} = \text{A}_{|z=0}$, $\underline{\text{w}} = \textbf{w}_{|z=\epsilon \zeta} = \text{w}_{|z=0}$ and $\text{A}_{b} = \textbf{A}_{|z=-1+\beta b} = \text{A}_{|z=-1}$, $\text{w}_{b} = \textbf{w}_{|z=-1+b}= \text{w}_{|z=-1}$.

\medskip

\noindent - If $s \in \R$ and $f$ is a function on $\RD$, $\left\lvert f \right\rvert_{H^{s}}$ is its $H^{s}$-norm and $\left\lvert f \right\rvert_{2}$ is its $L^{2}$-norm. The quantity $\left\lvert f \right\rvert_{W^{k,\infty}}$ is $W^{k,\infty}(\RD)$-norm of $f$, where $k \in \mathbb{N}^{\ast}$, and $\lver f \rver_{L^{\infty}}$ its $L^{\infty}(\RD)$-norm.

\medskip

\noindent - The operator $\left( \, , \, \right)$ is the $L^{2}$-scalar product in $\RD$.

\medskip

\noindent - If $N \in \mathbb{N}^{\ast}$, $\textbf{A}$ is defined on $\Omega$ and $\text{A} = \textbf{A} \circ \Sigma$, $\left\lvert\left\lvert \text{A} \right\rvert\right\rvert_{H^{N}}$ and $\left\lvert\left\lvert \textbf{A} \right\rvert\right\rvert_{H^{N}}$ are respectively the $H^{N}(\mathcal{S})$-norm  of $\text{A}$ and the $H^{N}(\Omega)$-norm of $\textbf{A}$. The $L^{p}$-norm are denoted $\llver \cdot \rrver_{p}$.

\medskip

\noindent - The norm $\left\lvert\left\lvert \cdot \right\rvert\right\rvert_{H^{s,k}}$ is defined in Definition \ref{Hsk_spaces}. 

\medskip

\noindent - The space $H^{s}_{\ast}(\RD)$, $\Hdot^{s}(\RD)$ and $H_{b} \hspace{-0.05cm} \left(\text{div}^{\mu}_{0},\Omega \right)$ are defined in Subsection \ref{notations_div_curl}.

\medskip

\noindent - If $f$ is a function defined on $\RD$, we denote $\nabla f$ the gradient of $f$.

\medskip

\noindent - If $\textbf{w}$ is a function defined on $\Omega$, $\nabla_{\! X,z} \textbf{w}$ is the gradient of $\textbf{w}$ and $\nabla_{\! X} \textbf{w}$ its horizontal component. We have the same definition for functions defined on $\mathcal{S}$. 

\medskip

\noindent - $\mathfrak{P}$, $\Lambda$ and $M_{N}$ are defined in Subsection \ref{notations_div_curl}.

\section{The div-curl problem}

\noindent In \cite{Castro_Lannes_vorticity}, A. Castro and D. Lannes study the system \eqref{div_curl_formulation} in the case of a flat bottom ($b=0$). The purpose of this part is to extend their results in the case of a non flat bottom.

\subsection{Notations}\label{notations_div_curl}

\noindent In this paper, we use the Beppo-Levi spaces (see \cite{Deny_Lions_beppo_levi})

\begin{equation*}
\forall s \geq 0 \text{, } \Hdot^{s}(\RD) = \left\{ f \in L^{2}_{\text{loc}}(\RD) \text{, } \nabla f \in H^{s-1}(\RD) \right\} \text{ and } \left\lvert \cdot \right\rvert_{\Hdot^{s}} = \left\lvert \nabla \cdot \right\rvert_{H^{s-1}}.
\end{equation*}

\noindent The dual space of $\Hdot^{s}(\RD)/\raisebox{-.65ex}{\ensuremath{\R}}$ is the space (see \cite{Buffoni_stability})

\begin{equation*}
H^{-s}_{\ast}(\RD) = \left\{ u \in H^{-s}(\RD) \text{, } \exists v \in H^{-s+1}(\RD) \text{, } u = \left\lvert D \right\rvert v \right\} \text{ and } \left\lvert \cdot \right\rvert_{H^{-s}_{\ast}} = \left\lvert \frac{\cdot}{|D|} \right\rvert_{H^{-s+1}}.
\end{equation*}

\noindent Notice that $\Hdot^{1}(\RD)/\raisebox{-.65ex}{\ensuremath{\R}}$ is a Hilbert space. Then, we can rigorously define the Hodge-Weyl decomposition and the operators $\frac{\nabla}{\Delta} \cdot$ and $\frac{\nabla^{\perp}}{\Delta} \cdot$. For $f \in L^{2}(\RD)^{d}$, $u = \frac{\nabla}{\Delta} \cdot f$ is defined as the unique solution, up to a constant, in $\Hdot^{1}(\RD)$ of the variational problem

\begin{equation*}
\int_{\RD} \nabla u \cdot \nabla \phi = \int_{\RD} f \cdot \nabla \phi \text{ , } \forall \phi \in  \Hdot^{1}(\RD). 
\end{equation*}

\noindent The operator $\frac{\nabla^{\perp}}{\Delta} \cdot$ can be defined similarly. Then, it is easy to check that the operators $\frac{\nabla^{\perp}}{\Delta} \cdot$ and $\frac{\nabla^{\perp}}{\Delta} \cdot$  belong to  $\mathcal{L} \left(H^{s}(\RD)^{d}, \Hdot^{s+1}(\RD) \right)$, for all $s \geq 0$.

\noindent The subspace of $L^{2}(\Omega)^{3}$ of functions whose rotationnal is in $L^{2}(\Omega)^{3}$ is the space

\begin{equation*}
H \hspace{-0.05cm} \left(\text{curl}^{\mu},\Omega \right) = \left\{ \textbf{A} \in L^{2}(\Omega)^{3} \text{, } \text{curl}^{\mu} \textbf{A} \in L^{2}(\Omega)^{3} \right\}.
\end{equation*}

\noindent The subspace of $L^{2}(\Omega)^{3}$ of divergence free vector fields is the space

\begin{equation*}
H \hspace{-0.05cm} \left(\text{div}^{\mu}_{0},\Omega \right) = \left\{ \textbf{A} \in L^{2}(\Omega)^{3} \text{, } \text{div}^{\mu} \textbf{A} = 0 \right\}.
\end{equation*}

\begin{remark}
\noindent Notice that \upshape $\textbf{A} \in H \hspace{-0.05cm} \left(\text{div}^{\mu}_{0},\Omega \right)$ \itshape implies that \upshape $\left( \textbf{A}_{|\partial \Omega} \cdot n \right)$ \itshape belongs to \upshape $H^{-\frac{1}{2}}(\partial \Omega)$ \itshape and \upshape $\textbf{A} \in H \hspace{-0.05cm} \left(\text{curl}^{\mu},\Omega \right)$ \itshape implies that \upshape $\left( \textbf{A}_{|\partial \Omega} \times n \right)$ \itshape belongs to \upshape $H^{-\frac{1}{2}}(\partial \Omega)$ \itshape (see \cite{Dautray_Lions}).
\end{remark}

\medskip

\noindent Finally, we define $H_{b} \hspace{-0.05cm} \left(\text{div}^{\mu}_{0},\Omega \right)$  as

\medskip

\begin{equation*}
H_{b} \hspace{-0.05cm} \left(\text{div}^{\mu}_{0},\Omega \right) = \left\{ \textbf{A} \in H \hspace{-0.05cm} \left(\text{div}^{\mu}_{0},\Omega \right) \text{, } A_{b} \cdot N^{\mu}_{b} \in H^{-\frac{1}{2}}_{\ast}(\RD) \right\}.
\end{equation*}

\begin{remark}
\noindent We have a similar equation to \eqref{Uss_curl} at the bottom

\upshape
\begin{equation*}
\frac{1}{\mu} \left( \nabla^{\mu}_{\! X,z} \times \textbf{U}^{\mu} \right)_{|z=-1 + \beta b} \cdot N^{\mu}_{b} = \nabla^{\perp} \cdot \left(\textbf{V}_{b} + \beta \textbf{w}_{b} \nabla b \right),
\end{equation*}
\itshape

\noindent hence, in the following, we suppose that \upshape $\bm{\omega} \in H_{b} \hspace{-0.05cm} \left(\text{div}^{\mu}_{0},\Omega \right)$ \itshape.
\end{remark}

\noindent We define $\mathfrak{P}$ and $\Lambda$ as the Fourier multiplier in $\mathcal{S}^{\prime} \hspace{-0.05cm} \left(\RD \right)$,

\begin{equation*}
\mathfrak{P} = \frac{\left\lvert D \right\rvert}{\sqrt{1+\sqrt{\mu} \left\lvert D \right\rvert}} \text{ and } \Lambda = \sqrt{1+\left\lvert D \right\rvert^{2}}.
\end{equation*}

\noindent Then it is important to notice that, if $\bm{\omega} \in H_{b} \hspace{-0.05cm} \left(\text{div}^{\mu}_{0},\Omega \right)$, the quantity $\frac{1}{\mathfrak{P}} \hspace{-0.08cm} \left(\omega_{b} \cdot N^{\mu}_{b} \right)$ makes sense and belongs to $L^{2} \! \left(\RD\right)$.
\medskip

\noindent In the following $M_{N}$ is a constant of the form

\begin{equation}\label{M_N_def}
M_{N} = C \left( \mu_{\max}, \frac{1}{h_{\min}}, \epsilon \left\lvert \zeta \right\rvert_{H^{N}}, \beta \left\lvert \nabla b \right\rvert_{H^{N}}, \beta \left\lvert b \right\rvert_{L^{\infty}} \right).
\end{equation}

\subsection{Existence and uniqueness}

\noindent In this part, we forget the dependence on $t$. First, notice that we can split the problem into two part. Let $\Phi \in \Hdot^{2}(\Omega)$ the unique solution of the Laplace problem (see \cite{Lannes_ww})

\begin{equation}\label{laplace_problem}
\left\{
\begin{aligned}
& \Delta^{\mu}_{\! X,z} \Phi = 0 \text{ in } \Omega,\\
&\Phi_{|z=\epsilon \zeta} = \psi \text{, } \left( N_{b}^{\mu} \hspace{-0.05cm} \cdot \hspace{-0.05cm} \nabla^{\mu}_{\! X,z} \Phi \right)_{|z=-1+\beta b} = 0.
\end{aligned}
\right.
\end{equation}

\noindent Using the vectorial identity

\begin{equation*}
\left(\nabla_{\! X,z}^{\mu} \Phi \right)_{\! \sslash} = \nabla \psi,
\end{equation*}

\noindent it is easy to check that if $\textbf{U}^{\mu}$ satisfies \eqref{div_curl_formulation},  $\widetilde{\textbf{U}}^{\mu} := \textbf{U}^{\mu} - \nabla_{\! X,z}^{\mu} \Phi$ satisfies

\begin{equation}\label{div_curl_formulation2}
\left\{
\begin{aligned}
&\text{curl}^{\mu} \; \widetilde{\textbf{U}}^{\mu} = \mu \: \bm{\omega} \text{ in } \Omega_{t},\\
&\text{div}^{\mu} \; \widetilde{\textbf{U}}^{\mu} = 0 \text{ in } \Omega_{t},\\
&\widetilde{\text{U}}_{\! \sslash}^{\mu} = \frac{\nabla^{\perp}}{\Delta} \left( \underline{\bm{\omega}} \cdot N^{\mu} \right) \text{at the surface},\\
&\widetilde{\textbf{U}}^{\mu}_{b} \cdot N^{\mu}_{b} = 0 \text{ at the bottom}.
\end{aligned}
\right.
\end{equation}

\noindent In the following we focus on the system \eqref{div_curl_formulation2}. We give 4 intermediate results in order to get the existence and uniqueness. The first Proposition shows how to control the norm of the gradient of a function with boundary condition as in \eqref{div_curl_formulation2}.

\begin{prop}\label{formulation_nabla}
\noindent Let \upshape $\zeta \text{, } b \in W^{2,\infty} \hspace{-0.05cm} \left( \RD \right)$, $\textbf{A} \in H(\text{div}^{\mu}_{0},\Omega) \cap H(\text{curl}^{\mu},\Omega)$. \itshape Then,  for all \upshape $C \in H^{1}(\RD)^{3}$, \itshape we have

\upshape
\begin{equation}
\int_{\Omega} \hspace{-0.2cm} \nabla_{\! X,z}^{\mu} \textbf{A} \hspace{-0.05cm}:\hspace{-0.05cm} \nabla_{\! X,z}^{\mu} \textbf{C} = \hspace{-0.2cm} \int_{\Omega} \hspace{-0.2cm} \text{curl}^{\; \mu} \textbf{A} \hspace{-0.05cm} : \hspace{-0.05cm} \text{curl}^{\; \mu} \textbf{C} + \left< l^{\mu}[ \epsilon \zeta] \! \left( \underline{\text{A}} \right) \hspace{-0.05cm} \text{, } \hspace{-0.05cm} \underline{C} \right>_{H^{-\frac{1}{2}} - H^{\frac{1}{2}}}  - \left<l^{\mu}[ \beta b]\! \left( \text{A}_{b} \right) \hspace{-0.05cm} \text{, } \hspace{-0.05cm} C_{b} \right>_{H^{-\frac{1}{2}} - H^{\frac{1}{2}}},
\end{equation}
\itshape

\noindent where for \upshape $\text{B} \in H^{\frac{1}{2}}\left(\RR \right)^{3}$ \itshape and for \upshape $\eta \in W^{2,\infty}(\RD)$,  

\begin{equation}
l^{\mu}[\eta] \! \left(\text{B} \right) = \begin{pmatrix} \sqrt{\mu} \nabla \text{B}_{v} - \mu \left( \nabla^{\perp} \eta \cdot \nabla \right) \text{B}_{h}^{\perp} \\ - \sqrt{\mu} \nabla \cdot \text{B}_{h} \end{pmatrix}.
\end{equation}
\itshape

\noindent Furthermore, if \upshape $\psit \in \Hdot^{\frac{3}{2}}(\RD)$ \itshape and

\upshape
\begin{equation*}
A_{b} \cdot N^{\mu}_{b} = 0 \text{ and } \text{A}_{\sslash} = \nabla^{\perp} \psit,
\end{equation*}
\itshape

\noindent we have the following estimate

\upshape

\begin{equation}\label{grad_norm_A_estimate}
\begin{aligned}
\llver \nabla^{\mu}_{\! X,z} \textbf{A} \rrver^{2}_{2} \hspace{-0.1cm} \leq &\hspace{-0.05cm} \llver \text{curl}^{\; \mu} \textbf{A} \rrver^{2}_{2} + \mu C \left(\epsilon \left\lvert \zeta  \right\rvert_{W^{2,\infty}}\hspace{-0.1cm}, \beta \left\lvert b \right\rvert_{W^{2,\infty}} \right) \hspace{-0.05cm} \left(\left\lvert \underline{\text{A}} \right\rvert_{2}^{2} +  \left\lvert \text{A}_{bh} \right\rvert_{2}^{2} \right)\\
&+ \mu C \left(\mu_{\max}, \epsilon \left\lvert \zeta  \right\rvert_{W^{2,\infty}}\hspace{-0.1cm}, \beta \left\lvert b \right\rvert_{W^{2,\infty}} \right)  \left\lvert \sqrt{1+\sqrt{\mu} |D|} \nabla \psit \right\rvert_{2} \hspace{-0.05cm} \left\lvert \sqrt{1+\sqrt{\mu} \left\lvert D \right\rvert } \underline{\text{A}}_{h} \right\rvert_{2}.
\end{aligned}
\end{equation}
\itshape

\end{prop}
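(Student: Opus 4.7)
The first identity is a $\mu$-adapted Green's formula. I start from the pointwise tensor identity
\begin{equation*}
\nabla_{\! X,z}^{\mu}\textbf{A}:\nabla_{\! X,z}^{\mu}\textbf{C}=\text{curl}^{\mu}\textbf{A}\cdot\text{curl}^{\mu}\textbf{C}+\sum_{j,k}(\nabla^{\mu})_{j}\textbf{A}_{k}\,(\nabla^{\mu})_{k}\textbf{C}_{j},
\end{equation*}
which follows from $\varepsilon_{ijk}\varepsilon_{ilm}=\delta_{jl}\delta_{km}-\delta_{jm}\delta_{kl}$. Using the product rule and $\text{div}^{\mu}\textbf{A}=0$, the last sum equals $\text{div}^{\mu}\bigl((\textbf{A}\cdot\nabla^{\mu})\textbf{C}-\textbf{A}\,\text{div}^{\mu}\textbf{C}\bigr)$. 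The $\mu$-adapted divergence theorem $\int_{\Omega}\text{div}^{\mu}F=\int_{\RD}F|_{z=\epsilon\zeta}\cdot N^{\mu}-\int_{\RD}F|_{z=-1+\beta b}\cdot N_{b}^{\mu}$ then produces two boundary terms. Since $\textbf{C}$ depends only on the horizontal variable (so $\partial_{z}\textbf{C}=0$), the surface integrand reduces to an expression in $\underline{\text{A}}$, $\underline{\text{C}}$ and $\nabla\zeta$; a short integration by parts on $\RD$ reorganizes it into $\langle l^{\mu}[\epsilon\zeta](\underline{\text{A}}),\underline{\text{C}}\rangle$, the cross term $(\nabla^{\perp}\zeta\cdot\nabla)\underline{\text{A}}_{h}^{\perp}$ emerging when the tilt contributions of $N^{\mu}$ are collected. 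The bottom case is symmetric, with opposite sign coming from the orientation of $N_{b}^{\mu}$.

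For the estimate, I specialize to $\textbf{C}=\textbf{A}$. At the bottom, $\text{A}_{b}\cdot N_{b}^{\mu}=0$ gives $\text{w}_{b}=\beta\sqrt{\mu}\nabla b\cdot\text{A}_{bh}$; substituting and integrating by parts on $\RD$ transfers every derivative onto a smooth coefficient built from $b$, yielding a bound $\mu\, C(\beta|b|_{W^{2,\infty}})\lver\text{A}_{bh}\rver_{2}^{2}$. At the surface, $\text{A}_{\sslash}=\nabla^{\perp}\psit$ translates to $\underline{\text{A}}_{h}=\sqrt{\mu}\bigl(\nabla^{\perp}\psit-\epsilon\underline{\text{w}}\nabla\zeta\bigr)$. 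Among the three pieces of $\langle l^{\mu}[\epsilon\zeta](\underline{\text{A}}),\underline{\text{A}}\rangle$, the two carrying a $\sqrt{\mu}$ prefactor, namely $\sqrt{\mu}\int\nabla\underline{\text{w}}\cdot\underline{\text{A}}_{h}$ and $-\sqrt{\mu}\int(\nabla\cdot\underline{\text{A}}_{h})\underline{\text{w}}$, simplify drastically: their principal part $\mu\int\nabla\underline{\text{w}}\cdot\nabla^{\perp}\psit$ vanishes because $\nabla\cdot\nabla^{\perp}=0$, and the remaining subprincipal pieces are controlled by $\mu\, C(\epsilon|\zeta|_{W^{2,\infty}})\lver\underline{\text{A}}\rver_{2}^{2}$ after another integration by parts on $\RD$.

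The main obstacle is the third term, $-\epsilon\mu\int(\nabla^{\perp}\zeta\cdot\nabla)\underline{\text{A}}_{h}^{\perp}\cdot\underline{\text{A}}_{h}$, which carries one full tangential derivative on $\underline{\text{A}}_{h}$ and therefore cannot be controlled by $\lver\underline{\text{A}}\rver_{2}^{2}$ alone. To handle it, I would use the Fourier-side inequality $|\xi|/\sqrt{1+\sqrt{\mu}|\xi|}\leq\sqrt{1+\sqrt{\mu}|\xi|}/\sqrt{\mu}$, valid for every $\xi\in\RD$: combined with Cauchy--Schwarz in Fourier variables and a standard product estimate absorbing the smooth coefficient $\nabla^{\perp}\zeta$, this yields $\epsilon\sqrt{\mu}\,C(\mu_{\max},\epsilon|\zeta|_{W^{2,\infty}})\lver\sqrt{1+\sqrt{\mu}|D|}\,\underline{\text{A}}_{h}\rver_{2}^{2}$. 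Finally, the constraint $\underline{\text{A}}_{h}=\sqrt{\mu}(\nabla^{\perp}\psit-\epsilon\underline{\text{w}}\nabla\zeta)$ lets me replace one of the two factors $\lver\sqrt{1+\sqrt{\mu}|D|}\,\underline{\text{A}}_{h}\rver_{2}$ by $\sqrt{\mu}\lver\sqrt{1+\sqrt{\mu}|D|}\nabla\psit\rver_{2}+\sqrt{\mu}\,C\lver\sqrt{1+\sqrt{\mu}|D|}\,\underline{\text{w}}\rver_{2}$, the second piece being absorbed via another Cauchy--Schwarz into the $\mu\lver\underline{\text{A}}\rver_{2}^{2}$ contribution. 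Careful tracking of every $\sqrt{\mu}$ appearing through $N^{\mu}$, $\nabla^{\mu}$, and the definition of $\text{A}_{\sslash}$, so that they recombine into the single $\mu$ prefactor of the stated estimate rather than being wasted, is the delicate accounting that will require the most attention.
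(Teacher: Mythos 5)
Your overall plan for the first identity follows the same path as the paper (the $\varepsilon_{ijk}\varepsilon_{ilm}$ contraction is exactly the pointwise version of Equation~\eqref{estimate_div_curl}, and the $\mu$-divergence theorem plays the role of the boundary term in that equation, which one then reorganizes on each surface into the $l^{\mu}$ pairing). The bottom estimate and your treatment of the two $\sqrt{\mu}$-prefactor pieces at the surface are also correct: they combine to $2\sqrt{\mu}\int\nabla\underline{\text{w}}\cdot\underline{\text{A}}_{h}$, and after substituting $\underline{\text{A}}_{h}=\sqrt{\mu}\nabla^{\perp}\psit-\epsilon\sqrt{\mu}\underline{\text{w}}\nabla\zeta$ the $\nabla^{\perp}\psit$ part vanishes by $\nabla\cdot\nabla^{\perp}=0$ and the remainder equals $\epsilon\mu\int\underline{\text{w}}^{2}\Delta\zeta$, which is controlled by $\mu\,C(\epsilon\lvert\zeta\rvert_{W^{2,\infty}})\lver\underline{\text{A}}\rver_{2}^{2}$.

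There is, however, a genuine gap in your treatment of the third piece $-\epsilon\mu\int(\nabla^{\perp}\zeta\cdot\nabla)\underline{\text{A}}_{h}^{\perp}\cdot\underline{\text{A}}_{h}$. You apply Cauchy--Schwarz in Fourier variables \emph{first}, obtaining $\epsilon\sqrt{\mu}\,C\lver\sqrt{1+\sqrt{\mu}\lvert D\rvert}\,\underline{\text{A}}_{h}\rver_{2}^{2}$, and only then substitute the boundary relation. Substituting into one of the two factors gives, besides the $\nabla\psit$ term, a contribution $\epsilon\sqrt{\mu}\lver\sqrt{1+\sqrt{\mu}\lvert D\rvert}(\underline{\text{w}}\nabla\zeta)\rver_{2}\sim\epsilon\sqrt{\mu}\lver\sqrt{1+\sqrt{\mu}\lvert D\rvert}\,\underline{\text{w}}\rver_{2}$; this is a strictly \emph{stronger} norm than $\lver\underline{\text{w}}\rver_{2}$, and there is no second Cauchy--Schwarz that puts it back into the $\mu\lver\underline{\text{A}}\rver_{2}^{2}$ bucket. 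Bounding it by the trace of $\llver\nabla^{\mu}_{X,z}\textbf{A}\rrver_{2}$ and absorbing would require $\epsilon^{2}\mu\,C$ to be small, which is not granted under Conditions~\eqref{constraints_parameters}. The paper avoids this by substituting the boundary relation and integrating by parts \emph{at the integrand level} before any Fourier-side inequality: the $\underline{\text{w}}\nabla\zeta$ pieces then combine algebraically with the rest of the surface integrand to produce explicit second derivatives of $\zeta$ multiplied by $\underline{\text{A}}^{2}$ (controlled by $\mu\,C\lver\underline{\text{A}}\rver_{2}^{2}$), and the \emph{only} leftover is $-2\epsilon\mu^{3/2}\int\underline{\text{A}}_{h}\cdot\nabla^{\perp}(\nabla\psit\cdot\nabla\zeta)$, which has a single factor of $\underline{\text{A}}_{h}$ paired against a derivative of a $\psit$-dependent product. \emph{That} term can be handled by your Fourier-side inequality together with Lemma~\ref{P_product} (applied to the product $\nabla\psit\cdot\nabla\zeta$), producing $\epsilon\mu\lver\sqrt{1+\sqrt{\mu}\lvert D\rvert}\,\underline{\text{A}}_{h}\rver_{2}\lver\sqrt{1+\sqrt{\mu}\lvert D\rvert}\nabla\psit\rver_{2}$ and no spurious $\underline{\text{w}}$ factor. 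In short: the order of the two operations matters, and the algebraic cancellation obtained by substituting before Cauchy--Schwarz is the key step your proposal misses.
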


\begin{proof}

\noindent Using the Einstein summation convention and denoting $\nabla^{\mu}_{\! X,z} = \left( \partial^{\mu}_{1}, \partial^{\mu}_{2}, \partial^{\mu}_{3} \right)^{T}$, a simple computation gives (see Lemma 3.2 in \cite{Castro_Lannes_vorticity} or Chapter 9 in \cite{Dautray_Lions}),

\begin{equation}\label{estimate_div_curl}
\llver \nabla^{\mu} \textbf{A} \rrver^{2}_{2} = \llver \text{curl}^{\mu} \textbf{A} \rrver_{2}^{2} + \llver \text{div}^{\mu} \textbf{A} \rrver_{2}^{2} + \int_{\partial \Omega} n^{\mu}_{i} \textbf{A}_{j} \partial^{\mu}_{j} \textbf{A}_{i} - n^{\mu}_{j} \textbf{A}_{j} \partial^{\mu}_{i} \textbf{A}_{i}.
\end{equation}

\noindent In this case, $\partial \Omega$ is the union of two surfaces and $\overset{\rightarrow}{n^{\mu}} = \pm \begin{pmatrix} - \sqrt{\mu} \nabla \eta \\ 1 \end{pmatrix}$, where $\eta$ is the corresponding surface. Then, one can check that (see also Lemma 3.8 in \cite{Castro_Lannes_vorticity}),

\begin{equation}\label{boundary_expression}
\int_{\{z=\eta \}} \hspace{-0.7cm} n^{\mu}_{i} \textbf{A}_{j} \partial^{\mu}_{j} \textbf{A}_{i} - n^{\mu}_{j} \textbf{A}_{j} \partial^{\mu}_{i} \textbf{A}_{i} \hspace{-0.05cm} = \hspace{-0.05cm} \pm \hspace{-0.1cm} \int_{\RD} \textbf{A}_{\eta,h} \cdot \left( 2 \sqrt{\mu} \nabla_{\! X} \text{A}_{\eta,v} - \mu \left( \nabla \eta^{\perp} \cdot \nabla \right) \textbf{A}_{\eta,h}^{\perp} \right),
\end{equation}

\noindent where $\textbf{A}_{\eta} := \textbf{A}_{|z=\eta}$. The first part of the Proposition follows by polarization of Equations \eqref{estimate_div_curl} and \eqref{boundary_expression} (as quadratic forms). For the second estimate, since $\text{A}_{b} \cdot N^{\mu}_{b} = 0$, we get at the bottom that

\small
\begin{equation*}
\begin{aligned}
\int_{\{z=-1+\beta b \}} \hspace{-1.5cm} n^{\mu}_{i} \textbf{A}_{j} \partial^{\mu}_{j} \textbf{A}_{i} \hspace{-0.06cm}-\hspace{-0.06cm} n^{\mu}_{j} \textbf{A}_{j} \partial^{\mu}_{i} \textbf{A}_{i} \hspace{-0.15cm} &= \hspace{-0.1cm} - 2 \hspace{-0.1cm} \int_{\RD} \hspace{-0.3cm} \mu \beta \hspace{-0.05cm} \left( \hspace{-0.05cm} \partial_{x} b \text{A}_{bx} \partial_{y} \text{A}_{by} \hspace{-0.06cm}+\hspace{-0.06cm} \partial_{y} b \text{A}_{by} \partial_{x} \text{A}_{bx} \hspace{-0.06cm}+\hspace{-0.06cm} \partial^{2}_{xy} b \text{A}_{bx} \text{A}_{by}\right) \hspace{-0.06cm} - \hspace{-0.06cm} \text{A}_{z} \sqrt{\mu} \text{div}_{X} \text{A}_{bh}\\
&=- \mu \beta \int_{\RD} \partial^{2}_{x} b \: \text{A}_{bx}^{2} + \partial^{2}_{y} b \: \text{A}_{by}^{2} + 2 \partial^{2}_{xy} b \: \text{A}_{bx} \text{A}_{by}.
\end{aligned}
\end{equation*}
\normalsize

\noindent At the surface, since $\underline{\text{A}}_{h} = \sqrt{\mu} \nabla^{\perp} \psit - \epsilon \sqrt{\mu} \underline{\text{A}}_{v} \nabla \zeta$, we have

\small
\begin{equation*}
\begin{aligned}
\int_{\{z= \epsilon \zeta \}} \hspace{-0.8cm} n^{\mu}_{i} \textbf{A}_{j} \partial^{\mu}_{j} \textbf{A}_{i} - n^{\mu}_{j} \textbf{A}_{j} \partial^{\mu}_{i} \textbf{A}_{i} \hspace{-0.1cm} &= \hspace{-0.1cm} - 2 \hspace{-0.1cm} \int_{\RD} \hspace{-0.3cm} \epsilon \mu \hspace{-0.05cm} \left( \partial_{x} \zeta \underline{\text{A}}_{y} \partial_{y} \underline{\text{A}}_{x} \hspace{-0.08cm}+\hspace{-0.08cm} \partial_{y} \zeta \underline{\text{A}}_{x} \partial_{x} \underline{\text{A}}_{y} \hspace{-0.08cm}+\hspace{-0.08cm} \partial_{xy}^{2} \zeta \underline{\text{A}}_{x} \underline{\text{A}}_{y} \right) \hspace{-0.08cm}+\hspace{-0.08cm} \sqrt{\mu} \left(\underline{\text{A}}_{h} \hspace{-0.1cm} \cdot \hspace{-0.1cm} \nabla_{\! X} \right) \underline{\text{A}}_{z}\\
&=\epsilon \mu \hspace{-0.1cm} \int_{\RD} \hspace{-0.2cm} \underline{\text{A}}_{x}^{2} \partial_{y}^{2} \zeta + \underline{\text{A}}_{y}^{2} \partial^{2}_{x} \zeta -2 \underline{\text{A}}_{x} \underline{\text{A}}_{y} \partial_{xy}^{2} \zeta + \underline{\text{A}}_{z}^{2} \left[ \partial_{x}^{2} \zeta + \partial_{y}^{2} \zeta \right]\\
&\quad - 2 \epsilon \mu^{\frac{3}{2}} \hspace{-0.1cm} \int_{\RD} \hspace{-0.2cm} \underline{\text{A}}_{h} \cdot \nabla^{\perp} \hspace{-0.1cm} \left( \nabla \psit \cdot \nabla \zeta \right).
\end{aligned}
\end{equation*}
\normalsize

\noindent Then,

\begin{equation*}
\begin{aligned}
\left\lvert 2 \epsilon \mu^{\frac{3}{2}} \hspace{-0.1cm} \int_{\RD} \hspace{-0.2cm} \underline{\text{A}}_{h} \cdot \nabla^{\perp} \hspace{-0.1cm} \left( \nabla \psit \cdot \nabla \zeta \right) \right\rvert \leq \epsilon \mu \lver \sqrt{1+\sqrt{\mu} |D|} \; \underline{\text{A}}_{h} \rver_{2} \lver \sqrt{\mu} \mathfrak{P} \left( \nabla \psit \cdot \nabla \zeta \right) \rver_{2}.
\end{aligned}
\end{equation*}

\noindent and estimate \eqref{grad_norm_A_estimate} follows easily from Lemma \ref{P_product}. 

\end{proof}

\noindent The second Proposition gives a control of the $L^{2}$-norm of the trace.

\begin{prop}\label{L2_trace}
\noindent Let \upshape $\zeta \text{, } b \in W^{1,\infty} \hspace{-0.05cm} \left( \RD \right)$, $\textbf{A} \in H(\text{div}^{\mu}_{0},\Omega) \cap H(\text{curl}^{\mu},\Omega)$ \itshape and \upshape $\psit \in \hspace{-0.05cm} \Hdot^{1}\left( \RD \right)$ \itshape such that

\upshape
\begin{equation*}
\text{A}_{b} \cdot N^{\mu}_{b} = 0 \text{ and } \text{A}_{\sslash} = \nabla^{\perp} \psit.
\end{equation*}
\itshape

\noindent Then, 

\upshape
\begin{equation}
\left\lvert \underline{\text{A}} \right\rvert^{2}_{2} + \left\lvert \text{A}_{b} \right\rvert^{2}_{2} \leq C . \left( \mu \left\lvert \nabla \psit \right\rvert_{2}^{2} + \llver \text{curl}^{\; \mu} \textbf{A} \rrver_{2} \llver \textbf{A} \rrver_{2} \right).
\end{equation}
\itshape

\end{prop}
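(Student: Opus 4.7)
The plan is a multiplier argument via the divergence theorem, combined with the algebraic identities available from the constraint $\text{div}^{\mu}\textbf{A}=0$.

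\textbf{Setup of the multiplier.} Let $\rho(X,z)$ be affine in $z$ with $\rho(X,\epsilon\zeta)=1$ and $\rho(X,-1+\beta b)=-1$, namely $\rho = (2z-\epsilon\zeta+1-\beta b)/h$, where $h=1+\epsilon\zeta-\beta b \geq h_{\min}$. Since $\rho\,\textbf{e}_{z}\cdot \hat{n}\,d\sigma = dX$ on both boundaries (using $\rho|_{\text{top}}=1$, $\rho|_{\text{bot}}=-1$ and the explicit form of the outward normal), the divergence theorem applied to $\rho\,|\textbf{A}|^{2}\,\textbf{e}_{z}$ yields the master identity
\begin{equation*}
\left\lvert \underline{\text{A}}\right\rvert_{2}^{2}+\left\lvert \text{A}_{b}\right\rvert_{2}^{2}
 = \int_{\Omega}(\partial_{z}\rho)\,|\textbf{A}|^{2}+2\int_{\Omega}\rho\,\textbf{A}\cdot\partial_{z}\textbf{A}.
\end{equation*}
The first summand is bounded by $C(h_{\min}^{-1})\left\lVert\textbf{A}\right\rVert_{2}^{2}$.

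\textbf{Key algebraic identity.} Using $\text{div}^{\mu}\textbf{A}=0$ together with the definition of $\text{curl}^{\mu}$, one verifies componentwise
\begin{equation*}
\textbf{A}\cdot\partial_{z}\textbf{A}
 = \textbf{A}\cdot\bigl(\text{curl}^{\mu}\textbf{A}\times\textbf{e}_{z}\bigr)
 + \sqrt{\mu}\,\text{div}_{X}(A_{z}\textbf{A}_{h})+\partial_{z}(A_{z}^{2}).
\end{equation*}
Substituting this into the master identity yields three families of terms to handle.

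\textbf{Treatment of the three terms.} The curl term is estimated by Cauchy--Schwarz and gives $\left\lVert\textbf{A}\right\rVert_{2}\left\lVert\text{curl}^{\mu}\textbf{A}\right\rVert_{2}$. The $\sqrt{\mu}\,\text{div}_{X}$ term is integrated by parts in $X$; the interior contribution $-\sqrt{\mu}\int \nabla_{X}\rho\cdot(A_{z}\textbf{A}_{h})$ is $O(\left\lVert\textbf{A}\right\rVert_{2}^{2})$, while the two boundary contributions are treated using the prescribed boundary conditions:
\begin{itemize}
\item At the bottom, $\textbf{A}_{b}\cdot N_{b}^{\mu}=0$ gives $A_{bz}=\beta\sqrt{\mu}\,\textbf{A}_{bh}\cdot\nabla b$, so the boundary integrand becomes $-2\beta^{2}\mu^{3/2}\int (\textbf{A}_{bh}\cdot\nabla b)^{2}$, which is $\leq 0$ and is discarded.
\item At the surface, the identity $\underline{\textbf{A}}_{h}=\sqrt{\mu}(\nabla^{\perp}\psit-\epsilon\underline{A_{z}}\nabla\zeta)$ leads to a principal contribution $-2\epsilon\mu^{3/2}\int \underline{A_{z}}\,\nabla^{\perp}\psit\cdot\nabla\zeta$ which, by Young's inequality, is $\leq \mu\left\lvert\nabla\psit\right\rvert_{2}^{2}+C\epsilon^{2}\mu|\nabla\zeta|_{\infty}^{2}\left\lvert\underline{A_{z}}\right\rvert_{2}^{2}$, plus a further $O(\epsilon^{2}\mu)$ quadratic piece of the same type.
\end{itemize}
The $\partial_{z}(A_{z}^{2})$ term, integrated by parts in $z$ against $\rho$, produces the partial normal traces $|\underline{A_{z}}|_{2}^{2}+|A_{bz}|_{2}^{2}$ plus a bulk $O(\left\lVert\textbf{A}\right\rVert_{2}^{2})$.

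\textbf{Absorption and closure.} Using once again the surface condition in the form $|\underline{\textbf{A}}|^{2} = |\underline{\textbf{A}}_{h}|^{2}+\underline{A_{z}}^{2}$ and $|\underline{\textbf{A}}_{h}|^{2}\leq 2\mu|\nabla\psit|^{2}+2\mu\epsilon^{2}|\nabla\zeta|_{\infty}^{2}\,\underline{A_{z}}^{2}$ (and analogously at the bottom with $\beta\nabla b$), the partial traces $|\underline{A_{z}}|_{2}^{2}+|A_{bz}|_{2}^{2}$ appearing on the right can be re-expressed in terms of the full traces on the left with a coefficient that becomes strictly less than $1$ after a suitable choice of weight. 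Absorbing these into the left-hand side yields the claimed bound, up to the residual bulk term $\left\lVert\textbf{A}\right\rVert_{2}^{2}$, which is reabsorbed into the target mixed norm $\left\lVert\textbf{A}\right\rVert_{2}\left\lVert\text{curl}^{\mu}\textbf{A}\right\rVert_{2}$ via a Poincaré-type inequality (coming from $\text{div}^{\mu}\textbf{A}=0$ together with the prescribed tangential/normal boundary conditions).

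\textbf{Main obstacle.} The delicate point is precisely the bookkeeping of the last step: one needs the partial-trace contributions $|\underline{A_{z}}|^{2}$ and $|A_{bz}|^{2}$ to emerge on the right-hand side with coefficients strictly smaller than $1$, and the residual $\left\lVert\textbf{A}\right\rVert_{2}^{2}$ to be reducible to $\left\lVert\textbf{A}\right\rVert_{2}\left\lVert\text{curl}^{\mu}\textbf{A}\right\rVert_{2}+\mu|\nabla\psit|_{2}^{2}$. Both facts hinge on the careful exploitation of $\underline{\textbf{A}}_{h}=\sqrt{\mu}(\nabla^{\perp}\psit-\epsilon\underline{A_{z}}\nabla\zeta)$ and $\textbf{A}_{b}\cdot N_{b}^{\mu}=0$, together with the smallness of the mixed factor $\epsilon\sqrt{\mu}|\nabla\zeta|_{\infty}$ appearing in the coupling terms.
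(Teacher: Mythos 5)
Your multiplier approach starts from a sound algebraic identity, but the closure fails, and in fact the paper proceeds quite differently. Two concrete problems prevent the argument from closing:

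First, the stated conclusion has a \emph{universal} constant $C$ with no dependence on $h_{\min}$, $\mu_{\max}$, $\lvert\nabla\zeta\rvert_\infty$ or $\lvert\nabla b\rvert_\infty$. Your affine weight $\rho$ immediately injects $\int_\Omega(\partial_z\rho)\lvert\textbf{A}\rvert^2$, which is of size $h_{\min}^{-1}\llver\textbf{A}\rrver_2^2$, and the $X$-integration by parts of $\text{div}_X(A_z\textbf{A}_h)$ produces a further interior term $\nabla_X\rho\cdot A_z\textbf{A}_h$ of the same kind. These contributions cannot be converted into $\llver\text{curl}^\mu\textbf{A}\rrver_2\llver\textbf{A}\rrver_2+\mu\lvert\nabla\psit\rvert_2^2$ without an $h_{\min}$-dependent constant. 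The Poincar\'e inequality you invoke at the end (Proposition \ref{Poincare}) requires the boundary conditions $\text{A}_b\times N_b^\mu=0$ and $\underline{\text{A}}\cdot N^\mu=0$, which are not those of the present proposition ($\text{A}_b\cdot N_b^\mu=0$ and $\text{A}_\sslash=\nabla^\perp\psit$), so it cannot be applied here as stated, and even if it could it would not replace $\llver\textbf{A}\rrver_2^2$ by the correct mixed term.

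Second, the $z$-integration by parts of $2\rho\,\partial_z(A_z^2)$ produces $+2(\lvert\underline{A}_z\rvert_2^2+\lvert A_{bz}\rvert_2^2)$ on the right-hand side (both signs positive since $\rho=\pm1$ at the two boundaries). Moving these across the identity leaves $\lvert\underline{\text{A}}_h\rvert_2^2-\lvert\underline{A}_z\rvert_2^2+\lvert\text{A}_{bh}\rvert_2^2-\lvert A_{bz}\rvert_2^2$ on the left, which is not sign-definite. The claim that the partial traces ``can be re-expressed \dots with a coefficient strictly less than $1$'' is not substantiated and does not hold in general: at the bottom, $A_{bz}^2=\mu\beta^2(\nabla b\cdot\text{A}_{bh})^2$ can dominate $\lvert\text{A}_{bh}\rvert^2$ without any smallness assumption on $\beta\sqrt{\mu}\lvert\nabla b\rvert_\infty$.

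The paper avoids all of this by taking no weight: it applies the fundamental theorem of calculus in $z$ to $\lvert\textbf{A}_h\rvert^2$ only, substitutes $\partial_z\textbf{A}_h=-(\text{curl}^\mu\textbf{A})_h^\perp+\sqrt{\mu}\nabla_X\textbf{A}_v$, integrates by parts in $X$ using $\text{div}^\mu\textbf{A}=0$, and then uses the two boundary conditions (which turn $\beta\sqrt{\mu}(\nabla b\cdot\text{A}_{bh})\text{A}_{bv}$ into $\text{A}_{bv}^2$ and $\epsilon\sqrt{\mu}(\nabla\zeta\cdot\underline{\text{A}}_h)\underline{\text{A}}_v$ into $\sqrt{\mu}\nabla^\perp\psit\cdot\underline{\text{A}}_h-\lvert\underline{\text{A}}_h\rvert^2$) to telescope everything into the exact identity
\begin{equation*}
\lvert\underline{\text{A}}\rvert_2^2+\lvert\text{A}_b\rvert_2^2=2\sqrt{\mu}\int_{\RD}\nabla^\perp\psit\cdot\underline{\text{A}}_h+2\int_\Omega(\text{curl}^\mu\textbf{A})_h^\perp\cdot\textbf{A}_h,
\end{equation*}
from which the estimate follows with a universal constant by Cauchy--Schwarz and Young. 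No bulk term $\llver\textbf{A}\rrver_2^2$ and no $h$-dependence ever appear.
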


\begin{proof}
\noindent Using the fact that $\partial_{z} \textbf{A}_{h} = - \left( \text{curl}^{\; \mu} \textbf{A}\right)_{h}^{\perp} + \sqrt{\mu} \nabla_{\! X} \textbf{A}_{v}$, we have

\begin{align*}
\int_{\RD} \hspace{-0.2cm} \left\lvert \underline{\text{A}}_{h} \right\rvert^{2} &= \hspace{-0.05cm} \int_{\RD} \left\lvert \text{A}_{bh} \right\rvert^{2} +2 \int_{\Omega} \partial_{z} \textbf{A}_{h} \cdot \textbf{A}_{h}\\
&= \hspace{-0.05cm} \int_{\RD} \left\lvert \text{A}_{bh} \right\rvert^{2} - 2 \int_{\Omega} \left( \text{curl}^{\; \mu} \textbf{A}\right)_{h}^{\perp} \cdot \textbf{A}_{h} + 2 \sqrt{\mu} \int_{\Omega} \nabla_{\! X} \textbf{A}_{v} \cdot \textbf{A}_{h}\\
&= \hspace{-0.05cm} \int_{\RD} \hspace{-0.2cm} \left\lvert \text{A}_{bh} \right\rvert^{2} \hspace{-0.05cm}-\hspace{-0.05cm} 2 \hspace{-0.05cm} \int_{\Omega} \hspace{-0.2cm} \left( \text{curl}^{\; \mu} \textbf{A}\right)_{h}^{\perp} \cdot \textbf{A}_{h} \hspace{-0.05cm}+\hspace{-0.05cm} 2 \hspace{-0.05cm} \int_{\Omega} \hspace{-0.2cm} \partial_{z} \textbf{A}_{v} \textbf{A}_{v} \hspace{-0.05cm} + 2 \sqrt{\mu} \Bigg( \hspace{-0.05cm} \int_{\RD} \hspace{-0.25cm} \beta \left( \nabla b \hspace{-0.05cm} \cdot \hspace{-0.05cm} \text{A}_{bh} \right) \text{A}_{bv} \hspace{-0.05cm}\\
&\hspace{9.3cm} - \int_{\RD} \hspace{-0.25cm} \left( \epsilon \nabla \zeta \hspace{-0.05cm} \cdot \hspace{-0.05cm} \underline{\text{A}}_{h} \right) \underline{\text{A}}_{v} \hspace{-0.05cm} \hspace{-0.05cm} \Bigg),
\end{align*}

\noindent where the third equality is obtained by integrating by parts the third integral and by using the fact that $\text{div}^{\mu} \textbf{A} = 0$.  Furthermore, thanks to the boundary conditions and Equality \eqref{Apara_vectorial}, we have

\begin{equation*}
\epsilon \sqrt{\mu} \left( \nabla \zeta \hspace{-0.05cm} \cdot \hspace{-0.05cm} \underline{\text{A}}_{h} \right) \underline{\text{A}}_{v} = \sqrt{\mu} \nabla^{\perp} \psit \cdot \underline{\text{A}}_{h} - \left\lvert \underline{\text{A}}_{h} \right\rvert^{2} \text{  and  } \beta \sqrt{\mu} \left( \nabla b \cdot \text{A}_{bh} \right) \text{A}_{bv} = \text{A}_{bv}^{2}.
\end{equation*}

\noindent Then, we get

\begin{equation}
\left\lvert \underline{\text{A}} \right\rvert_{2}^{2} + \left\lvert \text{A}_{b} \right\rvert_{2}^{2} = 2 \sqrt{\mu} \int_{\RD} \hspace{-0.2cm} \nabla^{\perp} \psit \cdot \underline{\text{A}}_{h} + 2 \int_{\RD} \hspace{-0.2cm} \left( \text{curl}^{\; \mu} \textbf{A}\right)_{h}^{\perp} \cdot \textbf{A}_{h},
\end{equation}

\noindent and the inequality follows.
\end{proof}

\noindent The third Proposition is a Poincar\'e inequality.

\begin{prop}\label{Poincare}
\noindent Let \upshape $\zeta \text{, } b \in W^{1,\infty} \hspace{-0.05cm} \left( \RD \right)$ \itshape and \upshape $\textbf{A} \in H(\text{div}^{\mu}_{0},\Omega) \cap H(\text{curl}^{\mu},\Omega)$ \itshape such that

\upshape
\begin{equation*}
\text{A}_{b} \times N^{\mu}_{b} = 0 \text{ and } \underline{\text{A}} \cdot N^{\mu} = 0.
\end{equation*}
\itshape

\noindent Then, 

\upshape
\begin{equation}
\llver \textbf{A} \rrver_{2} \leq C \left\lvert \epsilon \zeta - \beta b + 1 \right\rvert_{L^{\infty}} \left( \llver \text{curl}^{\mu} \textbf{A} \rrver_{2} + \llver \partial_{z} \textbf{A} \rrver_{2} \right).
\end{equation}
\itshape

\end{prop}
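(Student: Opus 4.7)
My strategy is a two-step bound: a vertical FTC argument that reduces the estimate of $\llver\textbf{A}\rrver_{2}$ to the control of the horizontal surface and bottom traces, followed by an integration-by-parts identity (à la Proposition \ref{L2_trace}) that controls those traces by $\llver\text{curl}^{\mu}\textbf{A}\rrver_{2}\llver\textbf{A}\rrver_{2}$. The two boundary conditions first decode into
\begin{equation*}
\underline{\text{A}}_{v} = \epsilon\sqrt{\mu}\,\underline{\text{A}}_{h}\cdot\nabla\zeta, \qquad \text{A}_{bh} = -\beta\sqrt{\mu}\,\text{A}_{bv}\nabla b,
\end{equation*}
coming from $\underline{\text{A}}\cdot N^{\mu} = 0$ and $\text{A}_{b}\times N_{b}^{\mu} = 0$ respectively. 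Writing $h_{\infty} := \lver\epsilon\zeta + 1 - \beta b\rver_{L^{\infty}}$ and applying FTC to $\textbf{A}_{h}$ from the bottom and to $\text{A}_{v}$ from the surface, then Cauchy-Schwarz and the two boundary identities, I obtain
\begin{equation*}
\llver\textbf{A}\rrver_{2}^{2} \leq C h_{\infty}\mu\bigl(\beta^{2}\lver\nabla b\rver_{L^{\infty}}^{2}\lver\text{A}_{bv}\rver_{2}^{2} + \epsilon^{2}\lver\nabla\zeta\rver_{L^{\infty}}^{2}\lver\underline{\text{A}}_{h}\rver_{2}^{2}\bigr) + C h_{\infty}^{2}\llver\partial_{z}\textbf{A}\rrver_{2}^{2},
\end{equation*}
so it suffices to bound $\lver\text{A}_{bv}\rver_{2}^{2} + \lver\underline{\text{A}}_{h}\rver_{2}^{2}$ in terms of $\llver\text{curl}^{\mu}\textbf{A}\rrver_{2}\llver\textbf{A}\rrver_{2}$.

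For this trace control I start from $\lver\underline{\text{A}}_{h}\rver_{2}^{2} = \lver\text{A}_{bh}\rver_{2}^{2} + 2\int_{\Omega}\partial_{z}\textbf{A}_{h}\cdot\textbf{A}_{h}$, use the identity $\partial_{z}\textbf{A}_{h} = -(\text{curl}^{\mu}\textbf{A})_{h}^{\perp} + \sqrt{\mu}\nabla_{X}\text{A}_{v}$, and apply the 3D divergence theorem to the vector field $(\text{A}_{v}\textbf{A}_{h},0)$ to integrate the gradient term by parts in $X$. The non-flatness of $\partial\Omega$ produces lateral boundary contributions proportional to $\nabla\zeta$ and $\nabla b$; the identity $\text{div}^{\mu}\textbf{A} = 0$ simultaneously converts $\sqrt{\mu}\nabla_{X}\cdot\textbf{A}_{h}$ into $-\partial_{z}\text{A}_{v}$, producing vertical boundary contributions $\lver\underline{\text{A}}_{v}\rver_{2}^{2} - \lver\text{A}_{bv}\rver_{2}^{2}$. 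Evaluating every trace term via the two boundary conditions, a remarkable cancellation takes place and the identity collapses to
\begin{equation*}
\lver\underline{\text{A}}_{h}\rver_{2}^{2} + \lver\text{A}_{bv}\rver_{2}^{2} + \epsilon^{2}\mu\!\!\int_{\RD}\!(\underline{\text{A}}_{h}\cdot\nabla\zeta)^{2} + \beta^{2}\mu\!\!\int_{\RD}\!\lver\nabla b\rver^{2}\text{A}_{bv}^{2} = -2\!\int_{\Omega}(\text{curl}^{\mu}\textbf{A})_{h}^{\perp}\cdot\textbf{A}_{h},
\end{equation*}
from which $\lver\underline{\text{A}}_{h}\rver_{2}^{2} + \lver\text{A}_{bv}\rver_{2}^{2} \leq 2\llver\text{curl}^{\mu}\textbf{A}\rrver_{2}\llver\textbf{A}\rrver_{2}$ follows by Cauchy-Schwarz.

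Combining the two displays gives $\llver\textbf{A}\rrver_{2}^{2} \leq C h_{\infty}\llver\text{curl}^{\mu}\textbf{A}\rrver_{2}\llver\textbf{A}\rrver_{2} + C h_{\infty}^{2}\llver\partial_{z}\textbf{A}\rrver_{2}^{2}$ (the constant $C$ depending on $\mu_{\max}$, $\epsilon\lver\nabla\zeta\rver_{L^{\infty}}$, $\beta\lver\nabla b\rver_{L^{\infty}}$), and a Young's inequality absorbs half of $\llver\textbf{A}\rrver_{2}^{2}$ into the left-hand side to yield the claimed bound. The main subtlety is the second step: the horizontal integration by parts in the non-product domain $\Omega$ generates lateral boundary terms involving $\nabla\zeta$ and $\nabla b$ which at first appear to destroy any hope of a clean estimate; only when both boundary conditions are substituted simultaneously do these terms combine with $\lver\underline{\text{A}}_{v}\rver_{2}^{2}$ and $\lver\text{A}_{bh}\rver_{2}^{2}$ into the positive-definite left-hand side above — the two boundary conditions really need to be used in tandem.
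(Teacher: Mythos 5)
Your proof is correct and uses the same two ingredients as the paper: a vertical FTC bound plus an integration-by-parts identity controlling the traces by $\llver\text{curl}^{\mu}\textbf{A}\rrver_{2}\llver\textbf{A}\rrver_{2}$; the identity you derive is precisely the (unstated) lemma in the paper's proof, made explicit via the two boundary conditions. The only minor suboptimality is that applying FTC componentwise and substituting the boundary conditions into $\lver\text{A}_{bh}\rver_{2}^{2}$ and $\lver\underline{\text{A}}_{v}\rver_{2}^{2}$ \emph{before} invoking the trace control inserts factors $\mu\epsilon^{2}\lver\nabla\zeta\rver_{L^{\infty}}^{2}$ and $\mu\beta^{2}\lver\nabla b\rver_{L^{\infty}}^{2}$ that survive, via Young's inequality, into your final constant $C$, whereas the paper applies FTC to the full vector $\textbf{A}$ from the bottom so that only $\lver\text{A}_{b}\rver_{2}^{2}$ appears and the trace lemma yields the stated estimate with a universal constant (indeed $C=2$ suffices).
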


\begin{proof}
\noindent We have

\begin{equation*}
\left\lvert \textbf{A}(X,z) \right\rvert^{2} = \left\lvert \text{A}_{b}(X) \right\rvert^{2} + 2 \int_{s=-1+\beta b(X)}^{z} \partial_{z} \textbf{A}(X,s) \cdot \textbf{A}(X,s) dX ds.
\end{equation*}

\noindent Then, the result follows from the following lemma, which is a similar computation to the one in Proposition \ref{L2_trace}.

\begin{lemma}
\noindent Let \upshape $\zeta \text{, } b \in W^{1,\infty} \hspace{-0.05cm} \left( \RD \right)$, $\textbf{A} \in H(\text{div}^{\mu}_{0},\Omega) \cap H(\text{curl}^{\mu},\Omega)$ \itshape such that

\upshape
\begin{equation*}
\text{A}_{b} \times N^{\mu}_{b} = 0 \text{ and } \underline{\text{A}} \cdot N^{\mu} = 0.
\end{equation*}
\itshape
\noindent Then, 

\upshape
\begin{equation}
\left\lvert \underline{\text{A}} \right\rvert^{2}_{2} + \left\lvert \text{A}_{b} \right\rvert^{2}_{2} \leq C \llver \text{curl}^{\; \mu} \textbf{A} \rrver_{2} \llver \textbf{A} \rrver_{2}.
\end{equation}
\itshape
\end{lemma}

\end{proof}

\noindent Finally, the fourth Proposition links the regularity of $\psit$ to the regularity of $\omega_{b} \cdot N^{\mu}_{b}$.

\begin{prop}\label{Control_psit}
\noindent Let \upshape $\zeta, b \in W^{1,\infty}(\RD)$ \itshape be such that Condition \eqref{nonvanishing} is satisfied and let \upshape $\bm{\omega} \in \Hb$\itshape. Then, there exists a unique solution \upshape $\psit \in \Hdot^{\frac{3}{2}}(\RD)$ \itshape to the equation \upshape $\Delta \psit = \underline{\bm{\omega}} \cdot N^{\mu}$ \itshape and we have

\upshape
\begin{equation*}
\left\lvert \nabla \psit \right\rvert_{2} \leq \sqrt{\mu} C \hspace{-0.05cm} \left(  \frac{1}{\text{h}_{\min}} \text{, } \epsilon \left\lvert \zeta \right\rvert_{W^{1,\infty}} \text{, } \beta \left\lvert b \right\rvert_{W^{1,\infty}} \right) \left( \llver \bm{\omega} \rrver_{2} + \frac{1}{\sqrt{\mu}} \lver \frac{1}{\mathfrak{P}} \left( \omega_{b} \cdot N^{\mu}_{b} \right) \rver_{2} \right),
\end{equation*} 
\itshape

\noindent and

\upshape
\small
\begin{equation*}
\left\lvert \sqrt{1+\sqrt{\mu} |D|} \nabla \psit \right\rvert_{2} \leq \sqrt{\mu} C \hspace{-0.05cm} \left(  \frac{1}{\text{h}_{\min}} \text{, } \epsilon \left\lvert \zeta \right\rvert_{W^{1,\infty}} \text{, } \beta \left\lvert b \right\rvert_{W^{1,\infty}} \right) \left( \llver \bm{\omega} \rrver_{2} + \frac{1}{\sqrt{\mu}} \left\lvert \frac{1}{\mathfrak{P}} \left( \omega_{b} \cdot N^{\mu}_{b} \right) \right\rvert_{2} \right).
\end{equation*} 
\normalsize
\itshape
\end{prop}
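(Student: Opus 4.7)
The plan is to cast $\Delta\psit = \underline{\bm{\omega}}\cdot N^{\mu}$ as a variational problem on $\Hdot^{1}(\RD)/\R$, make sense of the right-hand side using the constraint $\bm{\omega}\in\Hb$, solve it by Lax--Milgram, and then derive the two quantitative estimates by testing the variational equation against well-chosen multiples of $\psit$. For brevity write $\Lambda_{\!\mu} := \sqrt{1+\sqrt{\mu}|D|}$, so that $\mathfrak{P} = |D|/\Lambda_{\!\mu}$.

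First, to define the linear functional $\phi\mapsto\int_{\RD}\phi\,(\underline{\bm{\omega}}\cdot N^{\mu})$ on $\Hdot^{1}$, I exploit $\text{div}^{\mu}\bm{\omega}=0$: applying the divergence theorem to $\Phi\bm{\omega}$ in $\Omega$ for any lift $\Phi$ of $\phi$ with $\Phi_{|z=\epsilon\zeta}=\phi$ yields
\begin{equation*}
\int_{\RD}\phi\,(\underline{\bm{\omega}}\cdot N^{\mu})\,dX = \int_{\Omega}\nabla^{\mu}_{X,z}\Phi\cdot\bm{\omega}\,dXdz + \int_{\RD}\Phi_{b}\,(\omega_{b}\cdot N^{\mu}_{b})\,dX.
\end{equation*}
I take $\Phi$ to be the Poisson-type lift obtained by pulling back $\widetilde\Phi(X,z)=e^{\sqrt{\mu}|D|z}\phi(X)$ on the flat strip $\mathcal{S}=\RD\times(-1,0)$ through the straightening diffeomorphism $\Sigma$. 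A direct Fourier computation in $\mathcal{S}$, using the elementary bound $\min(\mu|\xi|^{2},\sqrt{\mu}|\xi|)\lesssim\mu|\xi|^{2}/(1+\sqrt{\mu}|\xi|)$ applied to $\int_{-1}^{0}|\nabla^{\mu}_{X,z}\widetilde\Phi|^{2}\,dz$, together with pullback estimates for $\Sigma$, yields the key extension bounds
\begin{equation*}
\|\nabla^{\mu}_{X,z}\Phi\|_{L^{2}(\Omega)} \leq \sqrt{\mu}\,M_{N}\,|\Lambda_{\!\mu}^{-1}\nabla\phi|_{2}, \qquad |\mathfrak{P}\Phi_{b}|_{2} \leq M_{N}\,|\nabla\phi|_{2}.
\end{equation*}
Combined with $\bm{\omega}\in L^{2}$ and $\frac{1}{\mathfrak{P}}(\omega_{b}\cdot N^{\mu}_{b})\in L^{2}$, Cauchy--Schwarz shows that the functional is continuous on $\Hdot^{1}/\R$, and Lax--Milgram applied to the coercive form $(u,v)\mapsto\int\nabla u\cdot\nabla v$ delivers existence and uniqueness. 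The $\Hdot^{3/2}$ regularity will follow from the sharper estimate below.

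For the first estimate, I test the variational identity with $\phi=\psit$ itself; Cauchy--Schwarz and the extension bounds above produce
\begin{equation*}
|\nabla\psit|_{2}^{2} \leq \left(\sqrt{\mu}M_{N}\|\bm{\omega}\|_{2}+M_{N}\bigl|\tfrac{1}{\mathfrak{P}}(\omega_{b}\cdot N^{\mu}_{b})\bigr|_{2}\right)|\nabla\psit|_{2},
\end{equation*}
and dividing by $|\nabla\psit|_{2}$ gives the claim. For the sharper estimate, I test against $\phi=\Lambda_{\!\mu}^{2}\psit$ (after a standard frequency-truncation if needed), so that $\int\nabla\psit\cdot\nabla\phi = |\Lambda_{\!\mu}\nabla\psit|_{2}^{2}$. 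Applying the refined extension bound to the trace $g=\Lambda_{\!\mu}^{2}\psit$ gives $\|\nabla^{\mu}\Phi\|_{2}\leq\sqrt{\mu}M_{N}|\Lambda_{\!\mu}^{-1}\nabla(\Lambda_{\!\mu}^{2}\psit)|_{2} = \sqrt{\mu}M_{N}|\Lambda_{\!\mu}\nabla\psit|_{2}$; and using $(1+y)e^{-2y}\leq 1$ for $y\geq0$, $|\mathfrak{P}\Phi_{b}|_{2}\leq M_{N}|\nabla\psit|_{2}\leq M_{N}|\Lambda_{\!\mu}\nabla\psit|_{2}$. The estimate closes after dividing by $|\Lambda_{\!\mu}\nabla\psit|_{2}$.

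The main obstacle is the refined extension estimate with the $\Lambda_{\!\mu}^{-1}$ gain. The naive bound $\|\nabla^{\mu}\Phi\|_{2}\lesssim\sqrt{\mu}|\nabla\phi|_{2}$ is enough for the first estimate but would fail to close the $\Lambda_{\!\mu}$-weighted one because $|\Lambda_{\!\mu}\nabla g|_{2}$ strictly dominates $|\Lambda_{\!\mu}^{1/2}\nabla g|_{2}$. Extracting the extra $\Lambda_{\!\mu}^{-1}$ factor requires the tight Fourier computation of $\int_{-1}^{0}|\nabla^{\mu}\widetilde\Phi|^{2}\,dz$ on the strip, and transferring the bound to $\Omega$ requires careful book-keeping of the Jacobian of $\Sigma$ in order to recover the $M_{N}$ constants on the non-flat domain.
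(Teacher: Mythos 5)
Your argument is correct and follows essentially the same route as the Castro--Lannes lemmas to which the paper defers: the divergence theorem combined with $\text{div}^{\mu}\bm{\omega}=0$ turns the right-hand side into a bulk integral plus a bottom-trace term, the Poisson-type lift $e^{\sqrt{\mu}|D|z}\phi$ on the flat strip $\mathcal{S}$ gives the two extension bounds (the sharp one with the $(1+\sqrt{\mu}|D|)^{-1/2}$ gain, and the $\mathfrak{P}$-control on the bottom trace), Lax--Milgram gives existence, and testing against $(1+\sqrt{\mu}|D|)\psit$ with a frequency truncation yields the weighted estimate. One minor point: the constant your argument actually produces depends only on $1/h_{\min}$ and the $W^{1,\infty}$ norms of $\zeta$ and $b$ (only first derivatives of $\sigma$ enter the Jacobian of $\Sigma$), exactly as in the statement, so the $M_{N}$ you write in places is cruder than what you have actually proved.
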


\begin{proof}
\noindent The proof is a small adaptation of Lemma 3.7 and Lemma 5.5 in \cite{Castro_Lannes_vorticity}.
\end{proof}

\noindent We can now prove an existence and uniqueness result for the system \eqref{div_curl_formulation} and \eqref{div_curl_formulation2}.

\begin{thm}\label{control_U}
\noindent Let \upshape $\zeta \text{, } b \in W^{2,\infty} \hspace{-0.05cm} \left( \RD \right)$ \itshape such that Condition \eqref{nonvanishing} is satisfied, \upshape $\psi \in \Hdot^{\frac{3}{2}} \left( \RD \right)$ \itshape and \upshape $\bm{\omega} \in \Hb$ \itshape. There exists a unique solution \upshape $\textbf{U}^{\mu} = \textbf{U}^{\mu}[\epsilon \zeta,\beta b](\psi,\bm{\omega}) \in H^{1} \left( \Omega \right)$ \itshape to \eqref{div_curl_formulation}. Furthermore, \upshape $\textbf{U}^{\mu} = \nabla_{\! X,z}^{\mu} \Phi + \text{curl}^{\mu} \textbf{A}$ \itshape, where \upshape $\Phi$ \itshape satisfies \eqref{laplace_problem} and \upshape $\textbf{A}$ \itshape satisfies

\upshape
\begin{equation}\label{div_curl_formulation3}
\left\{
\begin{aligned}
&\text{curl}^{\mu} \text{curl}^{\mu} \textbf{A} = \mu \: \bm{\omega} \text{ in } \Omega_{t},\\
&\text{div}^{\mu} \textbf{A} = 0 \text{ in } \Omega_{t},\\
&N^{\mu}_{b} \times \text{A}_{b} = 0,\\
&N^{\mu} \cdot \underline{\text{A}} = 0,\\
&\left( \text{curl}^{\mu} \textbf{A} \right)_{\sslash} = \frac{\nabla^{\perp}}{\Delta} \left( \underline{\omega} \cdot N^{\mu} \right),\\
&N^{\mu}_{b} \cdot \left( \text{curl}^{\; \mu} \textbf{A} \right)_{|z=-1+\beta b} = 0.
\end{aligned}
\right.
\end{equation}
\itshape

\noindent Finally, one has

\upshape
\small
\begin{equation}
\llver \textbf{U}^{\mu} \rrver_{2} \leq \sqrt{\mu} C \left(\mu_{\max}, \frac{1}{\text{h}_{\min}}, \epsilon \left\vert \zeta \right\rvert_{W^{2,\infty}}, \beta \left\vert b \right\rvert_{W^{2,\infty}} \right) \left( \sqrt{\mu} \llver \bm{\omega} \rrver_{2} + \left\lvert \frac{1}{\mathfrak{P}} \left( \omega_{b} \cdot N^{\mu}_{b} \right) \right\rvert_{2} + \left\vert \mathfrak{P} \psi \right\rvert_{2} \right),
\end{equation}
\normalsize
\itshape

\noindent and

\upshape
\small
\begin{equation}
\llver \nabla^{\mu}_{\! X,z} \textbf{U}^{\mu} \rrver_{2} \leq \mu C \left(\mu_{\max},  \frac{1}{\text{h}_{\min}}, \epsilon \left\vert \zeta \right\rvert_{W^{2,\infty}}, \beta \left\vert b \right\rvert_{W^{2,\infty}} \right) \left( \llver \bm{\omega} \rrver_{2} + \left\lvert \frac{1}{\mathfrak{P}} \left( \omega_{b} \cdot N^{\mu}_{b} \right) \right\rvert_{2} + \left\vert \mathfrak{P} \psi \right\rvert_{H^{1}} \right).
\end{equation}
\normalsize
\itshape
\end{thm}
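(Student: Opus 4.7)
The plan is to realise the decomposition announced in the statement, $\textbf{U}^{\mu} = \nabla^{\mu}_{\! X,z} \Phi + \text{curl}^{\mu} \textbf{A}$, construct each part by a variational argument, and then derive the two estimates separately. First, I would obtain $\Phi \in \Hdot^{2}(\Omega)$ by solving the Laplace problem \eqref{laplace_problem} with Lax-Milgram in $\Hdot^{1}(\Omega)$; this is a standard coercive mixed problem (see \cite{Lannes_ww}) which, combined with elliptic regularity, provides
\[
\llver \nabla^{\mu}_{\! X,z} \Phi \rrver_{2} \leq \sqrt{\mu} \, M_{N} \, \lver \mathfrak{P} \psi \rver_{2}, \qquad \llver \nabla^{\mu}_{\! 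X,z} \Phi \rrver_{H^{1}} \leq \mu \, M_{N} \, \lver \mathfrak{P} \psi \rver_{H^{1}}.
\]
As observed just above Proposition \ref{formulation_nabla}, the identity $(\nabla^{\mu}_{\! X,z} \Phi)_{\sslash} = \nabla \psi$ together with the bottom Neumann condition on $\Phi$ means that $\widetilde{\textbf{U}}^{\mu} := \textbf{U}^{\mu} - \nabla^{\mu}_{\! X,z} \Phi$ must solve \eqref{div_curl_formulation2}, so existence and uniqueness of $\textbf{U}^{\mu}$ reduce to the same question for \eqref{div_curl_formulation2}.

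To construct $\widetilde{\textbf{U}}^{\mu}$ I would seek a vector potential $\textbf{A}$ solving \eqref{div_curl_formulation3}. Introduce the Hilbert space
\[
V = \left\{ \textbf{C} \in H^{1}(\Omega)^{3} : \text{div}^{\mu} \textbf{C} = 0,\ \underline{\text{C}} \cdot N^{\mu} = 0,\ N^{\mu}_{b} \times \text{C}_{b} = 0 \right\},
\]
and pose the variational problem: find $\textbf{A} \in V$ with
\[
\int_{\Omega} \text{curl}^{\mu} \textbf{A} \cdot \text{curl}^{\mu} \textbf{C} = \mu \int_{\Omega} \bm{\omega} \cdot \textbf{C} + \ell(\textbf{C}), \qquad \forall \textbf{C} \in V,
\]
where $\ell(\textbf{C})$ is the boundary linear form on $\underline{\text{C}}_{h}$ involving $\nabla \psit$ generated, via \eqref{Apara_vectorial}, by weakly imposing the inhomogeneous surface condition $(\text{curl}^{\mu} \textbf{A})_{\sslash} = \nabla^{\perp} \psit$ with $\psit$ furnished by Proposition \ref{Control_psit}. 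Coercivity of the left-hand side on $V$ is the central point: one starts from the polarization identity of Proposition \ref{formulation_nabla} applied to $\textbf{A}$ with itself, uses the structural constraints built into $V$ (namely $\underline{\textbf{A}}_{v} = \epsilon \sqrt{\mu} \underline{\textbf{A}}_{h} \cdot \nabla \zeta$ at the surface and $\textbf{A}_{b} \parallel N^{\mu}_{b}$ at the bottom) to compute the boundary forms $l^{\mu}[\epsilon \zeta]$ and $l^{\mu}[\beta b]$ and dominate them by $\mu \, M_{N} \llver \textbf{A} \rrver_{H^{1}}^{2}$, and then closes the argument with the Poincaré-type inequality of Proposition \ref{Poincare}. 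Lax-Milgram then produces a unique $\textbf{A} \in V$, and setting $\widetilde{\textbf{U}}^{\mu} := \text{curl}^{\mu} \textbf{A}$ one checks that the last two conditions of \eqref{div_curl_formulation3} are the natural boundary conditions of the variational problem; uniqueness for \eqref{div_curl_formulation2} then follows from the same coercivity applied to a difference of two solutions.

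For the two estimates the $\Phi$-contribution is already bounded, so it remains to control $\widetilde{\textbf{U}}^{\mu} = \text{curl}^{\mu} \textbf{A}$. The $L^{2}$-bound is obtained by testing the variational formulation against $\textbf{A}$ itself, which yields
\[
\llver \widetilde{\textbf{U}}^{\mu} \rrver_{2}^{2} \lesssim \mu \llver \bm{\omega} \rrver_{2} \llver \textbf{A} \rrver_{2} + \sqrt{\mu} \, \lver \nabla \psit \rver_{2} \, \lver \underline{\textbf{A}}_{h} \rver_{2};
\]
then one dominates $\llver \textbf{A} \rrver_{2}$ by Proposition \ref{Poincare} (using $\llver \text{curl}^{\mu} \textbf{A} \rrver_{2} = \llver \widetilde{\textbf{U}}^{\mu} \rrver_{2}$ and $\llver \partial_{z} \textbf{A} \rrver_{2} \leq \llver \nabla^{\mu}_{\! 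X,z} \textbf{A} \rrver_{2}$, the latter again controlled via polarization), bounds the trace factor by Proposition \ref{L2_trace}, controls $\lver \nabla \psit \rver_{2}$ by Proposition \ref{Control_psit}, and absorbs. For the gradient estimate the key observation is that $\widetilde{\textbf{U}}^{\mu}$ satisfies \emph{exactly} the hypotheses of the second half of Proposition \ref{formulation_nabla}, since $\widetilde{\textbf{U}}^{\mu}_{b} \cdot N^{\mu}_{b} = 0$ and $\widetilde{\textbf{U}}^{\mu}_{\sslash} = \nabla^{\perp} \psit$. Applying \eqref{grad_norm_A_estimate} with $\text{curl}^{\mu} \widetilde{\textbf{U}}^{\mu} = \mu \bm{\omega}$, bounding the trace terms via Proposition \ref{L2_trace}, and invoking the second estimate of Proposition \ref{Control_psit} for $\lver \sqrt{1+\sqrt{\mu}|D|}\nabla \psit \rver_{2}$ delivers the stated $\nabla^{\mu}_{\! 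X,z}$-bound.

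The main obstacle is the coercivity step in the construction of $\textbf{A}$. The non-flat bottom condition $N^{\mu}_{b} \times \textbf{A}_{b} = 0$, which replaces the homogeneous condition available in the flat-bottom setting of \cite{Castro_Lannes_vorticity}, generates curvature contributions involving $\nabla^{2} b$ and second derivatives of $\zeta$ when one computes $l^{\mu}[\beta b]$ and $l^{\mu}[\epsilon \zeta]$. Controlling these uniformly in $\mu$ using only the $W^{2,\infty}$-regularity of $b, \zeta$ and the non-vanishing condition \eqref{nonvanishing} is where the constant $M_{N}$ of \eqref{M_N_def} arises, and this is the delicate point that must be made precise.
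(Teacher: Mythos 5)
Your proposal follows the same strategy as the paper: the decomposition $\textbf{U}^{\mu}=\nabla^{\mu}_{\! X,z}\Phi+\text{curl}^{\mu}\textbf{A}$, the variational problem for $\textbf{A}$ on $V$ (the paper's $\mathcal{X}$) solved by Lax-Milgram, coercivity via the polarization identity of Proposition~\ref{formulation_nabla} combined with the Poincar\'e estimate, and then the two bounds via Propositions~\ref{L2_trace}, \ref{Poincare}, \ref{Control_psit} and Lemma~\ref{boundary_control}. The overall architecture is correct.

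There is, however, one step where your stated bound would not close. You claim the boundary forms $l^{\mu}[\epsilon\zeta](\underline{\text{A}})$ and $l^{\mu}[\beta b](\text{A}_b)$, after using the constraints built into $V$, are ``dominated by $\mu M_N\llver \textbf{A}\rrver_{H^1}^2$''. This is too coarse to give coercivity: $\llver \textbf{A}\rrver_{H^1}^2$ contains $\llver\nabla_X\textbf{A}\rrver_2^2=\mu^{-1}\llver\sqrt{\mu}\nabla_X\textbf{A}\rrver_2^2$, so multiplying by $\mu$ produces a term $M_N\llver\sqrt{\mu}\nabla_X\textbf{A}\rrver_2^2$ of the \emph{same} order as the left-hand side $\llver\nabla^{\mu}_{\! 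X,z}\textbf{A}\rrver_2^2$, and since $M_N$ is not small the absorption fails uniformly in $\mu$. What is actually needed, and what the paper uses, is that after exploiting $\underline{\text{A}}\cdot N^{\mu}=0$ and $N^{\mu}_{b}\times\text{A}_b=0$ and integrating by parts, the boundary contributions contain \emph{no derivatives} of $\textbf{A}$ and are bounded by $\mu\,C\big(\epsilon\lver\zeta\rver_{W^{2,\infty}},\beta\lver b\rver_{W^{2,\infty}}\big)\big(\lver\underline{\text{A}}\rver_2^2+\lver\text{A}_{b}\rver_2^2\big)$. One then invokes the Lemma inside Proposition~\ref{Poincare} to get the sharp trace bound $\lver\underline{\text{A}}\rver_2^2+\lver\text{A}_b\rver_2^2\leq C\llver\text{curl}^{\mu}\textbf{A}\rrver_2\llver\textbf{A}\rrver_2$, then Proposition~\ref{Poincare} for $\llver\textbf{A}\rrver_2$, and a Young inequality in which the $\llver\partial_z\textbf{A}\rrver_2^2$ piece is absorbed into $\llver\nabla^{\mu}_{\! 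X,z}\textbf{A}\rrver_2^2$; this closes the coercivity uniformly in $\mu\leq\mu_{\max}$. A smaller remark: the irrotational estimate from \cite{Lannes_ww} gives $\llver\nabla^{\mu}_{\! X,z}\Phi\rrver_2\leq\sqrt{\mu}M_N\lver\mathfrak{P}\psi\rver_2$ and $\llver\nabla^{\mu}_{\! X,z}\nabla^{\mu}_{\! X,z}\Phi\rrver_2\leq\mu M_N\lver\mathfrak{P}\psi\rver_{H^1}$, which is the combination needed for the two estimates in the Theorem; the ordinary $H^1$-norm of $\nabla^{\mu}_{\! X,z}\Phi$ does not enjoy the $\mu$-gain you wrote.
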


\begin{proof}
\noindent The uniqueness follows easily from the last Propositions. The existence of $\Phi$ and the control of its norm are proved in Chapter 2 in \cite{Lannes_ww}. We focus on the existence of a solution of \eqref{div_curl_formulation3}. The main idea is the following variational formulation for the system \eqref{div_curl_formulation3} (we refer to Lemma 3.5 and Proposition 5.3 in \cite{Castro_Lannes_vorticity} for the details). We denote by 

\begin{equation*}
\mathcal{X} = \left\{ \textbf{C} \in H^{1} \left( \Omega \right) \text{, } \text{div}^{\, \mu} \textbf{C} = 0 \text{, } \underline{\text{A}} \cdot N^{\mu} = 0 \text{ and } \text{A}_{b} \times N^{\mu}_{b} = 0  \right\},
\end{equation*}

\noindent and $\psit$ the unique solution in $\Hdot^{1} (\RD)$ of $\Delta \psit = \underline{\bm{\omega}} \cdot N^{\mu}$. Then, $\text{A} \in \mathcal{X}$ is a variational solution of System \eqref{div_curl_formulation3} if

\begin{equation}\label{A_variational_formulation}
\forall \textbf{C} \in \mathcal{X} \text{, } \int_{\Omega} \text{curl}^{\mu} \; \textbf{A} \cdot \text{curl}^{\mu} \; \textbf{C} = \mu \hspace{-0.1cm} \int_{\Omega} \bm{\omega} \cdot \textbf{C} + \mu \hspace{-0.1cm} \int_{\RD} \nabla \psit \cdot \text{C}_{\sslash},
\end{equation}

\noindent The existence of such a $\textbf{A}$ follows Lax-Milgram's theorem. In the following we only explain how we get the coercivity. Thanks to a similar computation that we used to prove Estimate \eqref{grad_norm_A_estimate} we get

\begin{equation*}
\llver \nabla^{\mu}_{\! X,z} \textbf{A} \rrver^{2}_{2} \hspace{-0.1cm} \leq \llver \text{curl}^{\; \mu} \textbf{A} \rrver^{2}_{2} + \mu C \left(\epsilon \left\lvert \nabla \zeta  \right\rvert_{W^{2,\infty}}\hspace{-0.1cm}, \beta \left\lvert  \nabla b \right\rvert_{W^{2,\infty}} \right) \hspace{-0.05cm} \left(\left\lvert \underline{\text{A}} \right\rvert_{2}^{2} +  \left\lvert \text{A}_{bh} \right\rvert_{2}^{2} \right).
\end{equation*}

\noindent Then, thanks to the similar computation that in Proposition \ref{L2_trace} and Proposition \ref{Poincare} we obtain the coercivity

\begin{equation*}
\llver \textbf{A} \rrver_{2} + \llver \nabla^{\mu}_{\! X,z} \textbf{A} \rrver_{2} \leq C \left(\mu_{\max},  \frac{1}{h_{\min}}, \epsilon \left\lvert \zeta \right\rvert_{W^{2,\infty}}, \beta \left\lvert b \right\rvert_{W^{2,\infty}} \right) \llver \text{curl}^{\mu} \textbf{A} \rrver_{2}.
\end{equation*} 

\noindent Then, we can easily extend this for all $\textbf{C}$ in $\left\{ \textbf{C} \in H^{1} \left( \Omega \right) \text{, } \underline{\text{C}} \cdot N^{\mu} = 0 \text{ and } \text{C}_{b} \times N^{\mu}_{b} = 0  \right\}$ and thanks to the variational formulation of $\textbf{A}$ we get

\begin{equation*}
\llver \text{curl}^{\mu} \textbf{A} \rrver_{2} \leq C \left(\mu_{\max},  \frac{1}{h_{\min}}, \epsilon \left\lvert \zeta \right\rvert_{W^{2,\infty}}, \beta \left\lvert b \right\rvert_{W^{2,\infty}} \right) \left( \mu \llver \bm{\omega} \rrver_{2} + \sqrt{\mu} \left\lvert \nabla \psit \right\rvert_{2} \right). 
\end{equation*} 

\noindent Using Proposition \ref{Control_psit}, we get the first estimate. The second estimate follows from the first estimate, the inequality \eqref{grad_norm_A_estimate}, Proposition 2.4, Proposition 2.6 and the following Lemma.

\begin{lemma}\label{boundary_control}
\noindent Let \upshape $\zeta \text{, } b \in W^{1,\infty} \hspace{-0.05cm} \left( \RD \right)$ \itshape be such that Condition \eqref{nonvanishing} is satisfied. Then, for all \upshape $u \in H^{1}\left( \Omega \right)$,

\small
\begin{equation*}
\left\lvert \sqrt{1+\sqrt{\mu} |D|} \underline{u} \right\rvert_{2} + \left\lvert \sqrt{1+\sqrt{\mu} |D|} u_{b} \right\rvert_{2} \leq C \left( \frac{1}{h_{\min}}, \epsilon \left\lvert \zeta \right\rvert_{W^{1,\infty}}, \beta \left\lvert b \right\rvert_{W^{1,\infty}} \right) \left( \llver \nabla^{\mu}_{\! X,z} u \rrver_{2} + \llver u \rrver_{2} \right).
\end{equation*}
\normalsize
\itshape

\end{lemma}

\begin{proof}
\noindent The proof is a small adaptation of Lemma 5.4 in \cite{Castro_Lannes_vorticity}.
\end{proof}
\end{proof}

\subsection{The transformed div-curl problem}\label{transformed_div_curl}\label{transformed_divcurl}

\noindent In this section, we transform the div-curl problem in the domain $\Omega$ into a variable coefficients problem in the flat strip $\mathcal{S} = \RD \times \left( -1 , 0 \right)$. We introduce the diffeomorphism $\Sigma$,

\begin{equation}\label{diffeo}
\Sigma := \begin{array}{l}
\quad S \quad \rightarrow \quad \quad \quad \Omega \\
(X,z) \mapsto \left( X,z + \sigma(X,z) \right),
\end{array}
\end{equation}

\noindent where

\begin{equation*}
\sigma(X,z) := z \left( \epsilon \zeta(X) - \beta b(X) \right) + \epsilon \zeta(X).
\end{equation*}

\noindent In the following, we will focus on the bottom contribution and we refer to \cite{Castro_Lannes_vorticity} for the other terms. We keep the notations of \cite{Castro_Lannes_vorticity}. We define

\begin{equation*}
\text{U}^{\sigma,\mu} \left[ \epsilon \zeta, \beta b \right] \left(\psi, \omega \right) = \text{U}^{\mu} = \begin{pmatrix} \sqrt{\mu} \text{V} \\ \text{w} \end{pmatrix} = \textbf{U}^{\mu} \circ \Sigma  \text{, } \omega = \bm{\omega} \circ \Sigma,
\end{equation*}

\noindent and

\begin{equation*}
\nabla^{\sigma,\mu}_{\! X,z} = \left( J_{\Sigma}^{-1} \right)^{t} \nabla_{\! X,z}^{\mu} \text{, where } \left( J_{\Sigma}^{-1} \right)^{t} = \begin{pmatrix} Id_{d \times d} \;  \frac{-\sqrt{\mu}  \nabla \sigma}{1+\partial_{z} \sigma} \\ \quad 0 \quad \quad \frac{1}{1 + \partial_{z} \sigma} \end{pmatrix}.
\end{equation*}

\noindent Furthermore, for $\text{A} = \textbf{A} \circ \Sigma$,

\begin{equation*}
\text{curl}^{\sigma,\mu} \text{A} = \left( \text{curl}^{\mu} \;  \textbf{A} \right) \circ \Sigma = \nabla^{\sigma,\mu}_{\! X,z} \times \text{A} \text{, } \text{div}^{\sigma,\mu} \text{A} = \left( \text{div}^{\mu} \;  \textbf{A} \right) \circ \Sigma = \nabla^{\sigma,\mu}_{\! X,z} \cdot \text{A}.
\end{equation*}

\noindent Finally, if $\text{A}$ is vector field on $\mathcal{S}$, 

\begin{equation*}
\underline{\text{A}} = \text{A}_{|z=0} \text{, } \text{A}_{b} = \text{A}_{|z=-1} \text{ and } \text{A}_{\sslash} = \frac{1}{\sqrt{\mu}} \underline{\text{A}}_{h} + \epsilon \underline{\text{A}}_{v} \nabla \zeta. 
\end{equation*}

\noindent Then, $\text{U}^{\mu}$ is the unique solution in $H^{1}(\mathcal{S})$ of

\begin{equation}\label{div_curl_formulation_S}
\left\{
\begin{aligned}
&\text{curl}^{\sigma,\mu} \; \text{U}^{\mu} = \mu \, \omega \text{ in } \mathcal{S},\\
&\text{div}^{\sigma,\mu} \; \text{U}^{\mu} = 0 \text{ in } \mathcal{S},\\
&\Us^{\mu} = \nabla \psi + \frac{\nabla^{\perp}}{\Delta} \left( \underline{\omega} \cdot N^{\mu} \right) \text{ on } \left\{ z=0 \right\},\\
&\text{U}_{b}^{\mu} \cdot N^{\mu}_{b} = 0 \text{ on } \left\{ z=-1 \right\}.
\end{aligned}
\right.
\end{equation}

\noindent We also keep the notations in \cite{masmoudi_rousset}. If $\text{A} = \textbf{A} \circ \Sigma$, we define

\small
\begin{equation*}
\partial_{i}^{\sigma} \text{A} = \partial_{i} \textbf{A} \circ \Sigma \text{, } i \in \left\{t,x,y,z \right\} \text{, } \partial_{i}^{\sigma} = \partial_{i} - \frac{\partial_{i} \sigma}{1+\partial_{z} \sigma} \partial_{z} \text{, } i \in \left\{x,y,t \right\} \text{ and } \partial_{z}^{\sigma} = \frac{1}{1+\partial_{z} \sigma} \partial_{z}.
\end{equation*}
\normalsize

\noindent Then, by a change of variables and Proposition \ref{formulation_nabla} we get the following variational formulation for $\text{U}^{\mu}$. For all $\text{C} \in H^{1}(\mathcal{S})$,

\small
\begin{equation}\label{variational_formulation_U}
\int_{\mathcal{S}} \nabla^{\mu}_{\! X,z} \text{U}^{\mu} \cdot P \left( \Sigma \right) \nabla^{\mu}_{\! X,z} \text{C} = \mu \int_{\mathcal{S}} \left( 1+\partial_{z} \sigma \right) \omega \cdot \text{curl}^{\sigma, \mu} \text{C} + \int_{\RD} l^{\mu}[ \epsilon \zeta] \! \left( \underline{\text{U}}^{\mu} \right) \cdot \underline{C} - \int_{\RD} l^{\mu}[ \beta b]\! \left( \text{U}^{\mu}_{b} \right) \cdot C_{b},
\end{equation}
\normalsize

\noindent where $P \left( \Sigma \right) = \left( 1+\partial_{z} \sigma \right) J_{\Sigma}^{-1} \left( J_{\Sigma}^{-1} \right)^{t}$ and

\begin{equation*}
l^{\mu}[\eta] \! \left(\text{U}^{\mu}_{|z=\eta} \right) = \begin{pmatrix} \sqrt{\mu} \nabla \text{w}_{|z=\eta} - \mu^{\frac{3}{2}} \left( \nabla^{\perp} \eta \cdot \nabla \right) \text{V}^{\perp}_{|z=\eta} \\ - \mu \nabla \cdot \text{V}_{|z=\eta} \end{pmatrix}.
\end{equation*}

\noindent In order to obtain higher order estimates on $\text{U}^{\mu}$, we have to separate the regularity on $z$ and the regularity on $X$. We use the following spaces.

\begin{definition}\label{Hsk_spaces}

\noindent We define the spaces \upshape $H^{s,k}$ \itshape

\upshape
\begin{equation*}
H^{s,k} \! \left( \mathcal{S} \right) = \underset{0 \leq l \leq k}{\bigcap} H^{l}_{\! z} \! \left( -1,0 \; ; H^{s-l}_{\! X} \left( \RD \right) \right) \text{ and } \left\lvert u \right\rvert_{H^{s,k}} = \underset{0 \leq l \leq k}{\sum} \left\lvert \Lambda^{s-j} \partial_{z}^{j} u \right\rvert_{2}.
\end{equation*}
\itshape

\end{definition}

\noindent Furthermore, if $\alpha \in \mathbb{N}^{d} \backslash \{0\}$, we define the \textit{Alinhac's good unknown}

\begin{equation}\label{alinhac_psi}
\psia = \partial^{\alpha} \psi - \epsilon \underline{\text{w}} \partial^{\alpha} \zeta \text{ and } \psi_{(0)} = \psi.
\end{equation}

\noindent This quantities play an important role in the wellposedness of the water waves equations (see \cite{Alinhac_good_unknown_ww} or \cite{Lannes_ww}). In fact, more generally, if  $\text{A}$ is vector field on $\mathcal{S}$, we denote by 

\begin{equation}\label{alinhac_U}
\text{A}_{(\alpha)} = \partial^{\alpha} \text{A} - \partial^{\alpha} \sigma \partial_{z}^{\sigma} \text{A} \text{ , } \text{A}_{(0)} = \text{A} \text{ , }  \text{\underline{A}}_{(\alpha)} = \partial^{\alpha} \text{\underline{A}} -\epsilon \partial^{\alpha} \zeta \underline{\partial_{z}^{\sigma} \text{A}} \text{ and } \text{\underline{A}}_{(0)} = \text{\underline{A}}.
\end{equation}

\noindent We can now give high order estimates on $\text{U}^{\mu}$. We recall that $M_{N}$ is defined in \eqref{M_N_def}.

\begin{thm}\label{high_order_estimate_U}
\noindent Let \upshape $N \in \mathbb{N}$, $N \geq 5$. \itshape Then, under the assumptions of Theorem \ref{control_U}, for all \upshape $0 \leq l \leq 1$ \itshape and \upshape $0 \leq l \leq k \leq N-1$, \itshape the straightened velocity \upshape $\text{U}^{\mu}$, \itshape satisfies

\upshape
\begin{equation*}
\llver \nabla^{\mu}_{\! X,z} \text{U}^{\mu} \rrver_{H^{k,l}} \leq \mu M_{N} \left( \left\lvert \mathfrak{P} \psi \right\rvert_{H^{1}} + \underset{1 < |\alpha| \leq k+1}{\sum} \left\lvert \mathfrak{P} \psia \right\rvert_{2} + \llver  \omega \rrver_{H^{k,l}} + \left\lvert \frac{\Lambda^{k}}{\mathfrak{P}} \left( \omega_{b} \cdot N^{\mu}_{b} \right) \right\rvert_{2} \right).
\end{equation*}
\itshape
\end{thm}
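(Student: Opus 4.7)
The plan is to prove the estimate by induction on $k$, treating first the tangential case $l = 0$ and then recovering $l = 1$ directly from the div-curl equations themselves. The base case $k = l = 0$ is nothing but the second bound of Theorem \ref{control_U}, since for $k=0$ the sum $\sum_{1<|\alpha|\leq 1}$ is empty and $|\mathfrak{P}\psi|_{H^1}$ controls $|\mathfrak{P}\psi|_2$, and $|\Lambda^0/\mathfrak{P}(\omega_b\cdot N_b^\mu)|_2 = |\frac{1}{\mathfrak{P}}(\omega_b\cdot N_b^\mu)|_2$.

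For the inductive step with $l=0$, I would apply $\partial^\alpha$ with $|\alpha| = k$ to the straightened system \eqref{div_curl_formulation_S}, or equivalently to its variational formulation \eqref{variational_formulation_U}. A naive differentiation loses regularity because the diffeomorphism $\sigma$ inherits its smoothness from $\zeta$ and $b$. To avoid this, I would work with the Alinhac good unknown $\Umua$ from \eqref{alinhac_U}: a direct computation shows that $\Umua$ satisfies a div-curl system whose curl source is $\mu\partial^\alpha\omega$ and whose divergence and boundary residuals are lower-order commutators involving tangential derivatives of $\text{U}^\mu$ of order $\leq |\alpha|-1$ paired with derivatives of $\sigma$ up to order $k+1 \leq N$ --- the norms of the latter are absorbed in $M_N$ via \eqref{M_N_def}. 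At the surface, differentiation of the trace identity $\Us^\mu = \nabla\psi + \frac{\nabla^\perp}{\Delta}(\underline\omega\cdot N^\mu)$ produces exactly the quantity $\nabla\psia$ modulo commutators, which is the structural reason why $\psia$ appears on the right-hand side of the statement. At the bottom, the condition $\text{U}^\mu_b \cdot N_b^\mu = 0$ yields after differentiation a commutator of size $O(\beta\partial^\alpha b\cdot\partial_z\text{U}^\mu)$ controlled by $M_N$ and the inductive hypothesis.

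Applying Theorem \ref{control_U} together with Propositions \ref{formulation_nabla}--\ref{Control_psit} to $\Umua$ then yields a bound on $\llver\nabla^\mu_{X,z}\Umua\rrver_2$ by $|\mathfrak{P}\psia|_2$, $\llver\partial^\alpha\omega\rrver_2$ and $|\Lambda^{|\alpha|}/\mathfrak{P}(\omega_b\cdot N_b^\mu)|_2$, plus remainders absorbed by the induction. The identity $\partial^\alpha\text{U}^\mu = \Umua + \partial^\alpha\sigma\,\partial_z^\sigma\text{U}^\mu$ and a tame product estimate then let me pass from $\Umua$ back to $\partial^\alpha\text{U}^\mu$; the second term only requires $\partial_z\text{U}^\mu$ in a lower-order norm, which is already controlled by the inductive hypothesis. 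Summing over $|\alpha|\leq k$ closes the $l=0$ step.

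The case $l=1$ follows without further induction: $\text{div}^{\sigma,\mu}\text{U}^\mu = 0$ expresses $\partial_z\text{w}$ as horizontal derivatives of $\text{V}$ with coefficients depending on $\sigma$, while $\text{curl}^{\sigma,\mu}\text{U}^\mu = \mu\omega$ expresses $\partial_z\text{V}$ as horizontal derivatives of $\text{w}$ plus $\mu\omega_h^\perp$. Applying $\Lambda^{k-1}$ to these algebraic identities and using the $l=0$ bound at order $k$ together with the given norm $\llver\omega\rrver_{H^{k,1}}$ yields the full $H^{k,1}$ estimate. The main obstacle throughout is the careful bookkeeping of the Alinhac commutators at both boundaries, in particular the verification that the Fourier multiplier $\mathfrak{P}$ occurring in the trace estimate of Proposition \ref{formulation_nabla} is calibrated to match exactly the regularity produced by Proposition \ref{Control_psit}, so that only $|\mathfrak{P}\psia|_2$ and $|\Lambda^k/\mathfrak{P}(\omega_b\cdot N_b^\mu)|_2$ (and not any stronger trace norms) are needed on the right-hand side.
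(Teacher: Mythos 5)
Your overall structure (base case from Theorem~\ref{control_U}, finite induction on $k$ for $l=0$, then recover $l=1$ algebraically from $\text{div}^{\sigma,\mu}\text{U}^{\mu}=0$ and $\text{curl}^{\sigma,\mu}\text{U}^{\mu}=\mu\omega$) matches the paper. But the mechanism you use for the inductive step is genuinely different from the paper's, and as written it has a gap. The paper does \emph{not} pass to the Alinhac good unknown $\Umua$ in the proof of Theorem~\ref{high_order_estimate_U}; it instead tests the variational formulation~\eqref{variational_formulation_U} against $\text{C}=\partial^{2\alpha}\text{U}^{\mu}$, integrates by parts, and then controls the resulting commutators and boundary integrals (in particular the new bottom terms $I_1$, $I_2$, and the surface terms via Lemma~\ref{psit_control}). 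This produces a bound on $\llver \partial^\alpha \nabla^{\mu}\text{U}^{\mu}\rrver_2$ by the data plus $M_N\llver\Lambda^{k-1}\nabla^{\mu}_{X,z}\text{U}^{\mu}\rrver_2$, and closes by finite induction, never re-solving a div-curl problem.

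The gap in your route: you propose to ``apply Theorem~\ref{control_U} together with Propositions~\ref{formulation_nabla}--\ref{Control_psit} to $\Umua$,'' but none of those statements applies to $\Umua$ as is. Proposition~\ref{formulation_nabla} and Proposition~\ref{L2_trace} explicitly require $\textbf{A}\in H(\text{div}^{\mu}_0,\Omega)$ (i.e.\ exact $\text{div}^{\mu}\textbf{A}=0$) together with the exact boundary relations $\text{A}_b\cdot N^{\mu}_b=0$ and $\text{A}_{\sslash}=\nabla^{\perp}\psit$; Theorem~\ref{control_U} solves the homogeneous system~\eqref{div_curl_formulation}. For $\Umua$ none of these hold exactly --- as you yourself note, the divergence and both boundary conditions produce commutator residuals. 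So ``applying Theorem~\ref{control_U} to $\Umua$'' is not licensed; you would first have to prove an inhomogeneous version of the div-curl estimate (nonzero divergence, nonzero bottom normal trace, and an inhomogeneous tangential surface datum) with quantitative control on the inhomogeneities. That is substantial new work not contained in the results you cite, and it is essentially the content of Proposition~\ref{div_curl_good_unknowns_control}, which appears \emph{after} Theorem~\ref{high_order_estimate_U} in the paper's logical order and relies on it. Using it here would be circular unless you reprove those bounds from scratch. The paper's direct testing approach avoids the issue entirely, since it never needs to regard $\Umua$ as a solution of a boundary value problem. A smaller remark: for $l=1$ the paper simply cites the adaptation of Corollary~3.14 of \cite{Castro_Lannes_vorticity}; your algebraic argument is the right idea and consistent with that.
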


\begin{proof}
\noindent We start with $l=0$. We follow the proof of Proposition 3.12 and Proposition 5.8 in \cite{Castro_Lannes_vorticity}. Let $k \in [1,N-1]$, $\alpha \in \mathbb{N}^{d}$ with $|\alpha| \leq k$. We take $C= \partial^{2 \alpha} \text{U}^{\mu}$ in \eqref{variational_formulation_U}\footnote{A. Castro and D. Lannes explain why we can take such a $\text{C}$ in the variational formulation.} and we get

\begin{align*}
\int_{\mathcal{S}} \hspace{-0.1cm} \nabla^{\mu}_{\! X,z} \text{U}^{\mu} \cdot P \left( \Sigma \right) \nabla^{\mu}_{\! X,z} \partial^{2 \alpha} \text{U}^{\mu} = \mu \hspace{-0.1cm} \int_{\mathcal{S}} \hspace{-0.1cm} \left( 1+\partial_{z} \sigma \right) \omega \cdot \text{curl}^{\sigma, \mu} \partial^{2 \alpha} \text{U}^{\mu} &+ \int_{\RD} \hspace{-0.3cm} l^{\mu}[ \epsilon \zeta] \! \left( \underline{\text{U}}^{\mu} \right) \cdot \partial^{2 \alpha} \underline{\text{U}}^{\mu}\\
& - \int_{\RD} \hspace{-0.3cm} l^{\mu}[ \beta b]\! \left( \text{U}^{\mu}_{b} \right) \cdot \partial^{2 \alpha} \text{U}^{\mu}_{b}.
\end{align*}

\noindent We focus on the bottom contribution, which is the last term of the previous equation. Using the fact that $\text{w}_{b} = \mu  \beta \nabla b \cdot \text{V}_{b}$, we have

\small
\begin{equation*}
\begin{aligned}
(-1)^{|\alpha|} \int_{\RD} l^{\mu}[\beta b]\left( \text{U}_{b} \right) \cdot \partial^{2 \alpha} \text{U}_{b} &= \int_{\RD} \hspace{-0.3cm} 2 \mu \partial^{\alpha} \nabla \text{w}_{b} \cdot \partial^{\alpha} \text{V}_{b} - \mu^{2} \beta \partial^{\alpha} \left[\left(\nabla^{\perp} b \cdot \nabla \right) \text{V}_{b}^{\perp} \right] \cdot \partial^{\alpha} \text{V}_{b}\\
&= \int_{\RD} \hspace{-0.3cm} 2 \mu^{2} \beta \partial^{\alpha} \nabla \left(\nabla b \cdot \text{V}_{b} \right) \cdot \partial^{\alpha} \text{V}_{b} - \mu^{2} \beta \partial^{\alpha} \left[\left(\nabla^{\perp} b \cdot \nabla \right) \text{V}_{b}^{\perp} \right] \cdot \partial^{\alpha} \text{V}_{b}\\
&= \underbrace{\int_{\RD} \hspace{-0.3cm} 2 \mu^{2} \beta \left(\nabla b\right)^{t} \cdot \partial^{\alpha} \nabla \text{V}_{b} \cdot \partial^{\alpha} \text{V}_{b} - \beta \mu^{2} \left[\left(\nabla^{\perp} b \cdot \nabla \right) \partial^{\alpha} \text{V}_{b}^{\perp} \right] \cdot \partial^{\alpha} \text{V}_{b}}_{I_{1}}\\
&\;\;+ \underbrace{\int_{\RD} \hspace{-0.3cm} 2 \mu^{2} \beta \left[\partial^{\alpha} \nabla, \nabla b \right] \text{V}_{b} \cdot \partial^{\alpha} \text{V}_{b} - \beta \mu^{2} \left[ \partial^{\alpha}, \left( \nabla^{\perp} b \cdot \nabla \right) \right] \text{V}_{b}^{\perp} \cdot \partial^{\alpha} \text{V}_{b}}_{I_{2}}.
\end{aligned}
\end{equation*}
\normalsize

\noindent Then, a careful computation gives 

\begin{align*}
\left\lvert I_{1}  \right\rvert &= \left\lvert \mu^{2} \beta \int_{\RD} \partial^{2}_{x} b \left( \partial^{\alpha} \text{V}_{bx} \right)^{2} + \partial^{2}_{y} b \left( \partial^{\alpha} \text{V}_{by} \right)^{2} + 2 \mu^{2} \beta \int_{\RD} \partial_{xy}^{2} b \; \partial^{\alpha} \text{V}_{bx} \; \partial^{\alpha} \text{V}_{by} \right\rvert\\
&\leq  \mu C \left(\delta, \frac{1}{h_{\min}}, \epsilon \left\lvert \zeta \right\rvert_{W^{1,\infty}}, \beta \left\lvert b \right\rvert_{W^{2,\infty}} \right) \llver \partial^{\alpha} \text{U}^{\mu} \rrver_{2}^{2} + \delta \llver \nabla^{\mu}_{\! X,z} \partial^{\alpha} \text{U}^{\mu} \rrver_{2}^{2}\\
&\leq  C \left(\delta, \frac{1}{h_{\min}}, \epsilon \left\lvert \zeta \right\rvert_{W^{1,\infty}}, \beta \left\lvert b \right\rvert_{W^{2,\infty}} \right) \llver \nabla^{\mu}_{\! X,z} \text{U}^{\mu} \rrver_{H^{k-1}}^{2} + \delta \llver \nabla^{\mu}_{\! X,z} \partial^{\alpha} \text{U}^{\mu} \rrver_{2}^{2},
\end{align*}

\noindent where $\delta>0$ is small enough and where we use the following Lemma.

\begin{lemma}\label{trace_lemma}
\noindent Let \upshape $\zeta \text{, } b \in W^{1,\infty} \hspace{-0.05cm} \left( \RD \right)$ \itshape, such that Condition \eqref{nonvanishing} is satisfied. Then, for all \upshape $u \in H^{1}\left( \mathcal{S} \right)$ and $\delta > 0$,

\begin{equation*}
\left\lvert \underline{u} \right\rvert_{2}^{2} + \left\lvert u_{b} \right\rvert_{2}^{2} \leq C \left(\delta, \frac{1}{h_{\min}}, \epsilon \left\lvert \zeta \right\rvert_{W^{1,\infty}}, \beta \left\lvert b \right\rvert_{W^{1,\infty}} \right) \llver u \rrver_{2}^{2} + \delta \llver \partial_{z} u \rrver_{2}^{2}.
\end{equation*}
\itshape

\end{lemma}

\noindent Furthermore, using Lemma \ref{commutator_estimate} and the previous Lemma, we get

\begin{align*}
\left\lvert I_{2}  \right\rvert &\leq C \mu \beta \left\lvert \nabla b \right\rvert_{H^{k+1}} \left\lvert \text{U}^{\mu}_{b} \right\rvert_{H^{k}} \left\lvert \partial^{\alpha} \text{U}^{\mu}_{b} \right\rvert_{2}\\
&\leq  \mu C \left(\delta, \frac{1}{h_{\min}}, \epsilon \left\lvert \zeta \right\rvert_{W^{1,\infty}}, \beta \left\lvert b \right\rvert_{W^{1,\infty}}, \beta \left\lvert \nabla b \right\rvert_{H^{k+1}} \right) \llver \nabla^{\mu}_{\! X,z} \text{U}^{\mu} \rrver_{H^{k-1}}^{2} + \delta \llver \nabla^{\mu}_{\! X,z} \partial^{\alpha} \text{U}^{\mu} \rrver_{2}^{2}.
\end{align*}

\noindent For the surface contribution, we can do the same thing as in Proposition 3.12 and Proposition 5.8 in \cite{Castro_Lannes_vorticity}, using the previous Lemma to control $\partial^{\alpha} \text{w}$. Finally, for the other terms, the main idea is the following Lemma (which is a small adaptation of Lemma 3.13 and Lemma 5.6 in \cite{Castro_Lannes_vorticity}).

\begin{lemma}\label{psit_control}
\noindent Let \upshape $\psit$ \itshape the unique solution of \itshape $\Delta \psit = \underline{\omega} \cdot N^{\mu}$ \itshape in $\Hdot^{1}(\RD)$. Under the assumptions of the Theorem, we have the following estimate

\upshape
\begin{equation*}
\left\lvert \mathfrak{P} \nabla^{\perp} \psit \right\rvert_{H^{k}} \leq M_{N} \left(  \llver  \omega \rrver_{H^{k,0}} + \left\lvert \frac{\Lambda^{k}}{\mathfrak{P}} \left( \omega_{b} \cdot N^{\mu}_{b} \right) \right\rvert_{2} \right).
\end{equation*}
\itshape

\end{lemma}

\noindent Gathering the previous estimates with the estimate without the bottom contribution in Proposition 5.8 in \cite{Castro_Lannes_vorticity}, we get

\small
\begin{equation*}
\llver \partial^{\alpha} \nabla^{\mu} \text{U}^{\mu} \rrver_{2} \hspace{-0.1cm} \leq \hspace{-0.1cm} \mu M_{N} \hspace{-0.15cm} \left( \hspace{-0.15cm} \left\lvert \mathfrak{P} \psi \right\rvert_{H^{1}} \hspace{-0.1cm} + \hspace{-0.65cm} \underset{1 < |\alpha| \leq k+1}{\sum} \hspace{-0.55cm} \left\lvert \mathfrak{P} \psia \right\rvert \hspace{-0.1cm} + \hspace{-0.1cm} \llver  \omega \rrver_{H^{k,0}} \hspace{-0.1cm} + \hspace{-0.1cm} \left\lvert \hspace{-0.05cm} \frac{\Lambda^{k}}{\mathfrak{P}} \hspace{-0.1cm} \left( \omega_{b} \cdot N^{\mu}_{b} \right) \hspace{-0.05cm} \right\rvert_{2} \hspace{-0.1cm} \right)\hspace{-0.2cm} + \hspace{-0.1cm} M_{N} \hspace{-0.1cm} \llver \Lambda^{k-1} \nabla_{\! X,z} ^{\mu}\text{U}^{\mu} \rrver_{2} \hspace{-0.1cm} ,
\end{equation*}
\normalsize

\noindent and the inequality follows by a finite induction on $k$. If $l=1$, we can adapt the proof of Corollary 3.14 in \cite{Castro_Lannes_vorticity} easily.
\end{proof}

\noindent 

\begin{remark}\label{control_omega_bott}
\noindent Notice that for $k \geq 2$, we have

\begin{equation*}
\left\lvert \frac{\Lambda^{k}}{\mathfrak{P}} \left( \omega_{b} \cdot N^{\mu}_{b} \right) \right\rvert_{2} \leq C \left(\frac{1}{h_{\min}}, \mu_{\max}, \beta \left\vert \nabla b \right\rvert_{H^{k+1}} \right) \left( \llver \omega \rrver_{H^{k,1}} + \left\lvert \frac{1}{\mathfrak{P}} \left( \omega_{b} \cdot N^{\mu}_{b} \right) \right\rvert_{2} \right),
\end{equation*}

\noindent thanks to Lemma \ref{H_ast_reg}, Lemma \ref{boundary_control} and Lemma \ref{product_estimate}.
\end{remark}

\subsection{Time derivatives and few remarks about the good unknown}

\noindent This part is devoted to recall and adapt some results in \cite{Castro_Lannes_vorticity}. Unlike the previous Propositions, adding a non flat bottom is not painful. That is why we do not give proofs. We refer to section 3.5 and 3.6 in \cite{Castro_Lannes_vorticity} for the details. Firstly, in order to obtain an energy estimate of the Castro-Lannes water waves formulation, we need to control $\partial_{t} \text{U}^{\mu}$. This is the purpose of the following result.

\begin{prop}\label{time_derivative_control}
\noindent Let \upshape $T > 0$, $\zeta \in \mathcal{C}^{1} \left([0,T], W^{2,\infty} \left( \RD \right) \right)$, $b \in W^{2,\infty} \left( \RD \right)$ \itshape such that \eqref{nonvanishing} is satisfied for $0 \leq t \leq T$, \upshape $\psi \in \mathcal{C}^{1}\left([0,T], \Hdot^{\frac{3}{2}} \left( \RD \right) \right)$ \itshape and \upshape $\omega \in \mathcal{C}^{1} \left([0,T], L^{2} \left( \mathcal{S} \right)^{d+1} \right)$ \itshape such that $\nabla^{\mu,\sigma}_{\! X,z} \cdot \omega = 0$ for $0 \leq t \leq T$. Then,

\upshape
\begin{align*}
\partial_{t} \left( \text{U}^{\sigma,\mu} [\epsilon \zeta, \beta b] \left( \psi, \omega \right) \right) = \text{U}^{\sigma,\mu} [\epsilon \zeta, \beta b] \Big( \partial_{t} \psi - \epsilon \underline{\text{w}} \partial_{t} \zeta + \epsilon &\sqrt{\mu} \frac{\nabla}{\Delta} \cdot \left( \underline{\omega_{h}}^{\perp} \partial_{t} \zeta \right), \partial_{t}^{\sigma} \omega \Big)\\
& + \partial_{t} \sigma \partial_{z}^{\sigma} \left( \text{U}^{\mu,\sigma}[\epsilon \zeta, \beta b] \left( \psi, \omega \right) \right).
\end{align*}
\itshape

\noindent Furthermore, for \upshape$N \geq 5$\itshape , \upshape$\text{U}^{\mu} = \text{U}^{\sigma,\mu} [\epsilon \zeta, \beta b]$ \itshape satisfies

\upshape
\begin{align*}
\sqrt{\mu} \llver \partial_{t} \text{U}^{\mu} \rrver_{2} +  \llver \partial_{t} \nabla^{\mu}_{\! X,z} \text{U}^{\mu} \rrver_{H^{N-2,0}} &\leq \mu \max \left( M_{N}, \epsilon \left\lvert \partial_{t} \zeta \right\rvert_{H^{N-1}} \right) \times \\
&\hspace{-2.5cm} \Bigg(\left\lvert \mathfrak{P} \partial_{t} \psi \right\rvert_{H^{1}} + \hspace{-0.5cm} \underset{1 < |\alpha| \leq N-1}{\sum} \hspace{-0.5cm} \left\lvert \mathfrak{P} \partial_{t} \psia \right\rvert_{2} + \llver \partial_{t} \omega \rrver_{H^{N-2,0}} + \left\lvert \frac{\Lambda^{N-2}}{\mathfrak{P}} \left( \partial_{t} \omega_{b} \cdot N^{\mu}_{b} \right) \right\rvert_{2}\\
& \hspace{-1.5cm} + \left\lvert \mathfrak{P} \psi \right\rvert_{H^{1}} + \hspace{-0.4cm} \underset{1 < |\alpha| \leq N}{\sum} \left\lvert \mathfrak{P} \psia \right\rvert_{2} + \llver  \omega \rrver_{H^{N-1,1}} + \left\lvert \frac{1}{\mathfrak{P}} \left( \omega_{b} \cdot N^{\mu}_{b} \right) \right\rvert_{2} \Bigg).
\end{align*}
\itshape
\end{prop}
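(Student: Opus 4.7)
The plan is to establish the first identity of the proposition by a uniqueness argument on the div-curl system, and then derive the quantitative estimate by applying Theorem \ref{high_order_estimate_U} to the data appearing on the right-hand side of that identity.

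For the identity, I write $\text{U}^\mu = \textbf{U}^\mu\circ\Sigma$ and differentiate in time. Since $\Sigma(X,z,t) = (X, z+\sigma(X,z,t))$ and $(\partial_z\textbf{U}^\mu)\circ\Sigma = \partial_z^\sigma \text{U}^\mu$, one has
\begin{equation*}
\partial_t \text{U}^\mu = (\partial_t \textbf{U}^\mu)\circ\Sigma + \partial_t\sigma\,\partial_z^\sigma \text{U}^\mu.
\end{equation*}
It remains to identify $(\partial_t \textbf{U}^\mu)\circ\Sigma$ with $\text{U}^{\sigma,\mu}[\epsilon\zeta,\beta b](\widetilde\psi,\partial_t^\sigma\omega)$, where
\begin{equation*}
\widetilde\psi := \partial_t\psi - \epsilon\underline{\text{w}}\,\partial_t\zeta + \epsilon\sqrt{\mu}\,\tfrac{\nabla}{\Delta}\cdot(\underline{\omega_h}^\perp \partial_t\zeta).
\end{equation*}
By the uniqueness statement in Theorem \ref{control_U}, this reduces to checking that $(\partial_t\textbf{U}^\mu)\circ\Sigma$ satisfies the div-curl problem \eqref{div_curl_formulation_S} with this data. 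The div-free condition and the curl equation follow from $\partial_t$-differentiation of the corresponding equations for $\textbf{U}^\mu$, while the bottom condition $(\partial_t\textbf{U}^\mu)_b\cdot N^\mu_b = 0$ is immediate since $b$ does not depend on $t$. The delicate point is the surface trace: differentiating $\Us^\mu = \nabla\psi + \frac{\nabla^\perp}{\Delta}(\underline{\omega}\cdot N^\mu)$ in time and expressing $\partial_t\underline{\text{U}}^\mu$ via the Alinhac identity $\partial_t\underline{\text{U}}^\mu = \underline{(\partial_t\textbf{U}^\mu)\circ\Sigma} + \epsilon\,\partial_t\zeta\,\underline{\partial_z^\sigma\text{U}^\mu}$ produces exactly the two corrections inside $\widetilde\psi$. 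This is the argument of Proposition 3.15 in \cite{Castro_Lannes_vorticity}; the non-flat bottom changes nothing since its condition is time independent.

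For the quantitative estimate, I apply Theorem \ref{high_order_estimate_U} with $k=N-2$ to $(\partial_t\textbf{U}^\mu)\circ\Sigma$ at the data $(\widetilde\psi,\partial_t^\sigma\omega)$. The bottom term $\left\lvert\frac{\Lambda^{N-2}}{\mathfrak{P}}(\omega_b\cdot N^\mu_b)\right\rvert_2$ of that theorem here becomes $\left\lvert\frac{\Lambda^{N-2}}{\mathfrak{P}}(\partial_t\omega_b\cdot N^\mu_b)\right\rvert_2$, because $b$ and $N^\mu_b$ do not depend on $t$ so the bottom trace commutes with $\partial_t$. The failure of $\partial_t^\sigma\omega$ to be exactly $\nabla^{\sigma,\mu}$-free is absorbed by a lower-order commutator, treated as in Remark \ref{control_omega_bott}, yielding contributions bounded by $\epsilon\left\lvert\partial_t\zeta\right\rvert_{H^{N-1}}\llver\omega\rrver_{H^{N-1,1}}$. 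The two Alinhac corrections in $\widetilde\psi$ contribute, via Lemma \ref{trace_lemma}, Lemma \ref{psit_control} and tame product estimates, the non time-derivative block of the bound (the $\mathfrak{P}\psi_{(\alpha)}$, $\omega$ and bottom terms appearing on the last line of the estimate). Finally, the remainder $\partial_t\sigma\,\partial_z^\sigma\text{U}^\mu$ is bounded using Theorem \ref{high_order_estimate_U} on $\text{U}^\mu$ itself together with $\left\lvert\partial_t\sigma\right\rvert_{H^{N-1}}\leq \epsilon\left\lvert\partial_t\zeta\right\rvert_{H^{N-1}}$.

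The main obstacle is the bookkeeping: producing exactly the constant $\max(M_N, \epsilon\left\lvert\partial_t\zeta\right\rvert_{H^{N-1}})$ in front of the bound requires tracking every place where $\partial_t\zeta$ enters (inside $\partial_t\sigma$, inside the Alinhac corrections in $\widetilde\psi$, and in the commutator measuring the divergence of $\partial_t^\sigma\omega$) and verifying that $H^{N-1}$ regularity of $\partial_t\zeta$ suffices throughout, never $H^N$. The content beyond \cite{Castro_Lannes_vorticity} reduces to verifying that the non-flat bottom alters none of these bounds, which is immediate because $b$ is time independent and the corresponding contributions have already been handled in Theorem \ref{high_order_estimate_U}.
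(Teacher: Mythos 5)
The paper gives no proof here: it says explicitly that "adding a non flat bottom is not painful" and defers to sections 3.5 and 3.6 of the Castro--Lannes paper. Your proposal is a correct reconstruction of what that deferred argument is, and the approach you take (chain rule for the change of variables, uniqueness of the transformed div-curl system to identify $(\partial_t\textbf{U}^\mu)\circ\Sigma$ as $\text{U}^{\sigma,\mu}(\widetilde\psi,\partial_t^\sigma\omega)$, then Theorems \ref{control_U} and \ref{high_order_estimate_U} for the bound) is precisely what the paper is referencing.

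Two small remarks. First, $\partial_t^\sigma\omega$ is in fact \emph{exactly} $\nabla^{\sigma,\mu}$-divergence-free: both $\partial_t^\sigma$ and $\nabla^{\sigma,\mu}_{X,z}$ are pullbacks by $\Sigma$ of the commuting operators $\partial_t$ and $\nabla^\mu_{X,z}$ on $\Omega_t$, so $\nabla^{\sigma,\mu}\cdot\partial_t^\sigma\omega = \partial_t^\sigma(\nabla^{\sigma,\mu}\cdot\omega)=0$. There is therefore no commutator to absorb there. What does generate the $\epsilon\left\lvert\partial_t\zeta\right\rvert_{H^{N-1}}$ prefactor on the vorticity block is rather the substitution of $\llver\partial_t\omega\rrver_{H^{N-2,0}}$ for $\llver\partial_t^\sigma\omega\rrver_{H^{N-2,0}}$ in the final bound, since $\partial_t^\sigma\omega - \partial_t\omega = -\tfrac{\partial_t\sigma}{1+\partial_z\sigma}\partial_z\omega$ and $\partial_t\sigma$ is of size $\epsilon\left\lvert\partial_t\zeta\right\rvert$. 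Second, the $L^2$ piece $\sqrt{\mu}\llver\partial_t\text{U}^\mu\rrver_2$ on the left-hand side is covered by Theorem \ref{control_U} rather than Theorem \ref{high_order_estimate_U}; the latter controls only the gradient in $H^{k,l}$. Neither point affects the structure or conclusion of your argument.
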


\noindent Secondly, in the context of water waves, the \textit{Alinhac's good unknowns} play a crucial role. N. Masmoudi and F. Rousset remarked in \cite{masmoudi_rousset} that the \textit{Alinhac's good unknown} $\Umua$ is almost incompressible and A. Castro and D. Lannes showed that the $\text{curl}^{\sigma,\mu}$ of $\text{U}^{\mu}_{(\alpha)}$ is also well controlled. This is the purpose of the following Proposition. We recall that $\Umua$ is defined in \eqref{alinhac_U}.

\begin{prop}\label{div_curl_good_unknowns_control}
\noindent Let \upshape$N \geq 5$\itshape, \upshape$\zeta \in H^{N}(\RD)$\itshape, \upshape$b \in L^{\infty} \cap \Hdot^{N+1}(\RD)$ \itshape such that Condition \eqref{nonvanishing} is satisfied and \upshape$\omega \in H^{N-1}(\mathcal{S})$ \itshape  such that \upshape$\nabla^{\sigma,\mu} \cdot \omega = 0$\itshape. Then if we denote by \upshape$\text{U}^{\mu} = \text{U}^{\mu, \sigma} [\epsilon \zeta, \beta b]$\itshape, we have for \upshape$1 \leq |\alpha| \leq N$\itshape,

\upshape
\begin{align*}
&\llver \nabla^{\sigma,\mu}_{\! X,z} \cdot \Umua \rrver_{2} + \llver \nabla^{\sigma,\mu}_{\! X,z} \times \Umua  - \mu \partial^{\alpha} \omega \rrver_{2}\\
&\hspace{1cm} \leq \mu \lver \left( \epsilon \zeta, \beta b \right) \rver_{H^{N}} M_{N} \hspace{-0.1cm} \left(\hspace{-0.1cm} \left\lvert \mathfrak{P} \psi \right\rvert_{H^{1}} \hspace{-0.1cm} + \hspace{-0.5cm} \underset{1 < |\alpha'| \leq |\alpha|}{\sum} \hspace{-0.5cm} \left\lvert \mathfrak{P} \psi_{(\alpha^{'})} \right\rvert_{2} \hspace{-0.1cm} + \hspace{-0.05cm} \llver \omega \rrver_{H^{\max(|\alpha|-1,1)}} \hspace{-0.1cm} + \hspace{-0.05cm} \left\lvert \frac{1}{\mathfrak{P}} \left( \omega_{b} \cdot N_{b}^{\mu} \right) \right\rvert_{2} \right) \hspace{-0.1cm},
\end{align*}
\itshape

\noindent and

\upshape
\begin{equation*}
\left\lvert \mathfrak{P} \psia \right\rvert_{2} \leq M_{N} \left( \left\lvert \mathfrak{P} \psi \right\rvert_{H^{3}} + \frac{1}{\sqrt{\mu}} \underset{1 < |\alpha'| \leq |\alpha| - 1}{\sum} \hspace{-0.3cm}  \llver \nabla_{\! X} \text{U}^{\mu}_{(\alpha')} \rrver_{2} + \llver \omega \rrver_{H^{N-1}} +  \left\lvert \frac{1}{\mathfrak{P}} \left( \omega_{b} \cdot N_{b}^{\mu}  \right) \right\rvert_{2} \right).
\end{equation*}
\itshape
\end{prop}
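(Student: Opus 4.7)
The plan is to adapt the good-unknown strategy of Proposition 3.12 and Proposition 5.7 in \cite{Castro_Lannes_vorticity} to the present non-flat bottom setting. Since $\text{U}^{\mu}$ satisfies $\nabla^{\sigma,\mu}\!\cdot \text{U}^{\mu} = 0$ and $\nabla^{\sigma,\mu}\!\times \text{U}^{\mu} = \mu\,\omega$ in $\mathcal{S}$, I would apply $\partial^{\alpha}$ to both identities and absorb the top-order commutator between $\partial^{\alpha}$ and $\partial_i^{\sigma}$ into the Alinhac good unknown. Using the Masmoudi--Rousset identity (see also Lemma 3.19 of \cite{Castro_Lannes_vorticity})
\[
\partial^{\alpha}\bigl(\partial_i^{\sigma} f\bigr) \;=\; \partial_i^{\sigma}\bigl(\partial^{\alpha} f - \partial^{\alpha}\sigma\,\partial_z^{\sigma} f\bigr) \;+\; R_i^{\alpha}(f),
\]
componentwise on $\text{U}^{\mu}$, one obtains
\[
\nabla^{\sigma,\mu}\!\cdot \Umua \;=\; R^{\alpha}_{\mathrm{div}}, \qquad \nabla^{\sigma,\mu}\!\times \Umua - \mu\,\partial^{\alpha}\omega \;=\; R^{\alpha}_{\mathrm{curl}},
\]
where $R^{\alpha}_{\mathrm{div}}$ and $R^{\alpha}_{\mathrm{curl}}$ are finite sums of products of derivatives of $\sigma$ of order at most $|\alpha|$ with derivatives of $\text{U}^{\mu}$ of order at most $|\alpha|-1$.

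The next step is to estimate these remainders in $L^{2}(\mathcal{S})$ via tame product estimates, using that $\sigma(X,z) = z(\epsilon\zeta-\beta b) + \epsilon\zeta$, so every relevant Sobolev norm of $\sigma$ is controlled by the quantities $\epsilon|\zeta|_{H^N}$, $\beta|\nabla b|_{H^N}$ and $\beta|b|_{L^\infty}$ entering $M_N$ in \eqref{M_N_def}. Plugging in the estimate of Theorem \ref{high_order_estimate_U} for $\llver\nabla^{\mu}_{\!X,z}\text{U}^{\mu}\rrver_{H^{\max(|\alpha|-1,1),0}}$ then yields the right-hand side of the first stated inequality. The only genuinely new terms with respect to \cite{Castro_Lannes_vorticity} arise from the bottom $\{z=-1\}$, where the remainders contain factors $\partial^{\beta} b$; these are handled exactly as in the bottom-contribution computation inside the proof of Theorem \ref{high_order_estimate_U}, using the boundary identity $\text{w}_b = \mu\beta\,\nabla b\cdot \text{V}_b$ to recover the expected scaling in $\mu$ and in $\beta\,b$.

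For the second inequality, the starting point is the surface relation $\Us^{\mu} = \nabla \psi + \frac{\nabla^{\perp}}{\Delta}(\underline{\omega}\cdot N^{\mu})$. Applying $\partial^{\alpha}$ at $z=0$ and using definitions \eqref{alinhac_psi}--\eqref{alinhac_U}, one expresses $\nabla \psia$ as $\bigl(\underline{\Umua}\bigr)_{\sslash}$ plus a Hodge--Weyl contribution $\frac{\nabla^{\perp}}{\Delta}\partial^{\alpha}(\underline{\omega}\cdot N^{\mu})$ plus Leibniz commutators of the form $\partial^{\alpha'}\zeta\,\partial^{\alpha''}\underline{\text{w}}$ with $|\alpha''|<|\alpha|$, each of which involves only lower-order good unknowns $\Umua[\alpha']$. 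Applying the multiplier $\mathfrak{P}/|D|$ to this identity, bounding the surface trace $|\mathfrak{P}\,\underline{\Umua}|_2$ by $\frac{1}{\sqrt{\mu}}\llver \nabla_{\!X}\Umua \rrver_2$-type norms through Lemma \ref{boundary_control}, and controlling the vorticity contribution via Lemma \ref{psit_control} (applied with the high-order estimate $\Lambda^{k}/\mathfrak{P}$ on $\omega_b\cdot N^{\mu}_b$ from Remark \ref{control_omega_bott}) yields the claimed bound, with the inductive sum over $|\alpha'|\le|\alpha|-1$ accounting for the commutator terms.

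The main obstacle is the careful bookkeeping of the non-flat bottom through the good-unknown computation. Two subtle points must be watched: first, $b$ must enter only via the Beppo--Levi norm $\beta|b|_{L^{\infty}\cap\Hdot^{N+1}}$, which forces one to integrate by parts the worst $\partial^{\gamma}\!b$ factors against themselves inside the boundary remainder rather than against external quantities; second, every boundary commutator must keep the correct scaling in $\mu$, which is guaranteed by systematically exploiting $\text{w}_b = \mu\beta\,\nabla b\cdot \text{V}_b$ and the decomposition $\Us^{\mu} = \nabla\psi + \nabla^{\perp}\psit$ to trade a bottom vertical derivative for a horizontal one. Once both points are handled, the estimates follow by the same induction on $|\alpha|$ used in Theorem \ref{high_order_estimate_U}.
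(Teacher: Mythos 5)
The paper gives no proof of this Proposition: just before stating it, the author writes that adding the non-flat bottom ``is not painful'' and simply refers the reader to sections 3.5 and 3.6 of \cite{Castro_Lannes_vorticity}. Your strategy for the first inequality is the right one: apply $\partial^{\alpha}$ to the div and curl identities, absorb the leading commutator through the Masmoudi--Rousset identity into the good unknown $\Umua$, and estimate the remainders by tame product estimates together with Theorem~\ref{high_order_estimate_U}, handling the bottom contributions through $\text{w}_{b}=\mu\beta\nabla b\cdot\text{V}_{b}$.

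Your account of the second inequality, however, is off by one order in the good unknown. After writing $\nabla\psia=(\underline{\Umua})_{\sslash}+\cdots$ and applying $\mathfrak{P}/|D|$, you bound a trace of $\Umua$ through Lemma~\ref{boundary_control}, so the main term on your right-hand side is at the full order $|\alpha|$; attributing the sum over $|\alpha'|\leq|\alpha|-1$ to ``commutator terms'' does not dispose of it. But the stated estimate only contains $\frac{1}{\sqrt{\mu}}\llver\nabla_{\! X}\text{U}^{\mu}_{(\alpha')}\rrver_{2}$ for $1<|\alpha'|\leq|\alpha|-1$. An estimate of $\mathfrak{P}\psia$ in terms of $\Umua$ at the same order would in fact be circular, since the whole point of the inductive scheme is to control $\mathfrak{P}\psia$ by strictly lower-order good unknowns before feeding the result back into the first inequality.

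The missing step is the factorization $\partial^{\alpha}=\partial_{k}\partial^{\gamma}$ with $|\gamma|=|\alpha|-1$ and the identity
\begin{equation*}
\Umua = \partial_{k}\,\text{U}^{\mu}_{(\gamma)} + \partial^{\gamma}\sigma\,\partial_{k}\partial_{z}^{\sigma}\text{U}^{\mu},
\qquad\text{equivalently}\qquad
\psia = \partial_{k}\psi_{(\gamma)} + \epsilon\bigl(\partial_{k}\underline{\text{w}}\bigr)\partial^{\gamma}\zeta,
\end{equation*}
which the paper itself uses in the proof of Proposition~\ref{speed_surface_control}. This moves the trace estimate down to $\text{U}^{\mu}_{(\gamma)}$ at order $|\gamma|=|\alpha|-1$; the extra $\partial_{k}$ together with the $\frac{1}{\sqrt{\mu}}$ coming from the $\sslash$-projection is then absorbed by the smoothing multiplier $\mathfrak{P}/|D|=\left(1+\sqrt{\mu}|D|\right)^{-1/2}$ and Lemma~\ref{boundary_control}, which is precisely how the term $\frac{1}{\sqrt{\mu}}\llver\nabla_{\! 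X}\text{U}^{\mu}_{(\gamma)}\rrver_{2}$ on the right-hand side of the Proposition arises. Without this factorization your argument, as written, does not reproduce the claimed bound.
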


\noindent Furthermore, we can control $\left\lvert \mathfrak{P} \psi \right\rvert_{H^{3}}$ by $\text{U}^{\mu}$ and $\omega$.

\begin{prop}\label{frakP_psi_H3}
\noindent Let \upshape$N \geq 5$\itshape, \upshape$\zeta \in H^{N}(\RD)$\itshape, \upshape$b \in L^{\infty} \cap \Hdot^{N+1}(\RD)$ \itshape such that Condition \eqref{nonvanishing} is satisfied and \upshape$\omega \in H^{2,1}(\mathcal{S})$ \itshape such that \upshape$\nabla^{\sigma,\mu} \cdot \omega = 0$\itshape. Then, 

\upshape
\begin{equation*}
\left\lvert \mathfrak{P} \psi \right\rvert_{H^{3}} \leq M_{N} \left(\frac{1}{\sqrt{\mu}} \llver \Lambda^{3} \text{U}^{\sigma, \mu}[\epsilon \zeta, \beta b]\left( \psi, \omega \right) \rrver_{2} + \llver \omega \rrver_{H^{2,1}} + \left\lvert \frac{1}{\mathfrak{P}} \left( \omega_{b} \cdot N^{\mu}_{b} \right) \right\rvert_{2} \right).
\end{equation*}
\itshape
\end{prop}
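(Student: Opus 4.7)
The key observation is that $\Us^{\mu}$ encodes $\psi$ cleanly through the Hodge--Weyl decomposition $\Us^{\mu} = \nabla\psi + \nabla^{\perp}\widetilde{\psi}$: taking the divergence annihilates the rotational part and yields $\Delta\psi = \nabla\cdot\Us^{\mu}$, hence $|D|\psi = -i\,\mathcal{R}\cdot\Us^{\mu}$ where $\mathcal{R}$ denotes the Riesz transform. Consequently
\begin{equation*}
\mathfrak{P}\psi = \frac{-i\,\mathcal{R}\cdot\Us^{\mu}}{\sqrt{1+\sqrt{\mu}|D|}},
\end{equation*}
and boundedness of $\mathcal{R}$ on $H^{3}(\RD)$ reduces the proof to establishing
\begin{equation*}
\left|\frac{\Us^{\mu}}{\sqrt{1+\sqrt{\mu}|D|}}\right|_{H^{3}} \leq M_{N}\!\left(\frac{1}{\sqrt{\mu}}\llver\Lambda^{3}\text{U}^{\mu}\rrver_{2} + \llver\omega\rrver_{H^{2,1}} + \lver\tfrac{1}{\mathfrak{P}}(\omega_{b}\cdot N^{\mu}_{b})\rver_{2}\right).
\end{equation*}
Note that $\widetilde{\psi}$ drops out entirely: the vorticity contributions on the right-hand side will be generated purely by the div--curl analysis of $\text{U}^{\mu}$ itself.

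I would then split $\Us^{\mu} = \frac{1}{\sqrt{\mu}}\underline{\text{U}}^{\mu}_{h} + \epsilon\,\underline{\text{U}}^{\mu}_{v}\nabla\zeta$, handling the product term by Moser-type estimates (in the spirit of Lemma~\ref{P_product}) together with $\epsilon|\nabla\zeta|_{H^{N}}\leq M_{N}$. To turn the resulting boundary quantity into a volume norm, I apply Lemma~\ref{boundary_control} to the auxiliary field
\begin{equation*}
\widetilde{u} := \frac{\Lambda^{3}}{1+\sqrt{\mu}|D|}\,\text{U}^{\mu},
\end{equation*}
whose trace satisfies $\bigl|\sqrt{1+\sqrt{\mu}|D|}\,\underline{\widetilde{u}}\bigr|_{2} = \bigl|\tfrac{\Lambda^{3}}{\sqrt{1+\sqrt{\mu}|D|}}\underline{\text{U}}^{\mu}\bigr|_{2}$ -- exactly the quantity to control. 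The right-hand side of the lemma demands $\|\widetilde{u}\|_{2} + \|\nabla^{\mu}_{X,z}\widetilde{u}\|_{2}$, and the $L^{2}$ and horizontal pieces are immediately $\lesssim \|\Lambda^{3}\text{U}^{\mu}\|_{2}$ via the elementary Fourier multiplier bounds $\tfrac{1}{1+\sqrt{\mu}|D|}\leq 1$ and $\tfrac{\sqrt{\mu}|D|}{1+\sqrt{\mu}|D|}\leq 1$.

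The decisive step is bounding $\|\partial_{z}\widetilde{u}\|_{2}$. Since $\Lambda^{3}$ and $(1+\sqrt{\mu}|D|)^{-1}$ act only in the horizontal variable, they commute with $\partial_{z}$, so it suffices to control $\partial_{z}\text{U}^{\mu}$ pointwise in $z$. The equations $\mathrm{div}^{\sigma,\mu}\text{U}^{\mu} = 0$ and $\mathrm{curl}^{\sigma,\mu}\text{U}^{\mu} = \mu\,\omega$ solve algebraically for $\partial_{z}^{\sigma}\text{U}^{\mu}$ as a linear combination of $\sqrt{\mu}\nabla_{X}^{\sigma}\text{U}^{\mu}$ and $\mu\,\omega$, after which $\partial_{z} = (1+\partial_{z}\sigma)\partial_{z}^{\sigma}$ produces only $M_{N}$-bounded coefficients. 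Applying this substitution and once more using $\tfrac{\sqrt{\mu}|D|}{1+\sqrt{\mu}|D|}\leq 1$ yields
\begin{equation*}
\|\partial_{z}\widetilde{u}\|_{2} \leq M_{N}\!\left(\|\Lambda^{3}\text{U}^{\mu}\|_{2} + \Bigl\|\tfrac{\mu\Lambda^{3}}{1+\sqrt{\mu}|D|}\,\omega\Bigr\|_{2}\right).
\end{equation*}

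The main obstacle is then converting $\bigl\|\tfrac{\mu\Lambda^{3}}{1+\sqrt{\mu}|D|}\omega\bigr\|_{2}$ into the targeted $M_{N}(\|\omega\|_{H^{2,1}} + |\tfrac{1}{\mathfrak{P}}(\omega_{b}\cdot N^{\mu}_{b})|_{2})$. The key point is that at high frequency $\tfrac{\mu}{1+\sqrt{\mu}|D|}\sim \sqrt{\mu}/|D|$ trades one horizontal derivative for a factor $\sqrt{\mu}$, so the multiplier behaves effectively as $\sqrt{\mu}\,\Lambda^{2}$ in that regime and as $\mu\,\Lambda^{3}$ at low frequency, both absorbed by $\sqrt{\mu}\,\|\omega\|_{H^{2,0}}$. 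Promoting $H^{2,0}$ to $H^{2,1}$ via the trace inequality for $\omega$ and inserting the bottom correction $\tfrac{1}{\mathfrak{P}}(\omega_{b}\cdot N^{\mu}_{b})$ -- exactly as in Remark~\ref{control_omega_bott} -- closes the estimate. The bookkeeping of $\mu$-powers and frequency regimes, together with the commutators with $\partial\sigma$ introduced by the straightening, is the most technical aspect, but the strategy parallels Lemmas~3.7 and 5.5 of \cite{Castro_Lannes_vorticity} adapted to the non-flat bottom geometry.
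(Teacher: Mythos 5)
Your strategy---reduce to $\Us^{\mu}$ via the Hodge projection (so that $\psit$ drops out), convert the trace into a volume norm by applying Lemma \ref{boundary_control} to $\widetilde{u}=\frac{\Lambda^{3}}{1+\sqrt{\mu}|D|}\text{U}^{\mu}$, and finally trade $\partial_{z}\text{U}^{\mu}$ for $\sqrt{\mu}\nabla_{\! X}\text{U}^{\mu}$ and $\mu\omega$ through the div--curl system---is sound, and is in all likelihood the argument behind Lemma 3.23 of Castro--Lannes to which the paper defers without repetition. Two points should nevertheless be tightened. First, writing $\partial_{z}^{\sigma}\text{U}^{\mu}$ as a combination of $\sqrt{\mu}\nabla_{\! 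X}^{\sigma}\text{U}^{\mu}$ and $\mu\omega$ has not yet eliminated $\partial_{z}$, since $\nabla_{\! X}^{\sigma}=\nabla_{\! X}-\frac{\nabla_{\! X}\sigma}{1+\partial_{z}\sigma}\partial_{z}$; one must still invert the resulting affine relation, which reduces to inverting $I+\mu\,\nabla_{\! X}\sigma\otimes\nabla_{\! X}\sigma$ (positive definite, $M_{N}$-bounded inverse). It is precisely this inversion, together with the commutator of $\frac{\Lambda^{3}}{1+\sqrt{\mu}|D|}$ against the variable coefficients in $\sigma$, that generates the constant $M_{N}$; this step deserves to be made explicit rather than absorbed into ``$M_{N}$-bounded coefficients.'' Second, your closing paragraph is superfluous and slightly off: $\llver\omega\rrver_{H^{2,0}}\leq\llver\omega\rrver_{H^{2,1}}$ holds trivially from Definition \ref{Hsk_spaces}, so there is no ``promotion'' step, and Remark \ref{control_omega_bott} is used to \emph{bound} $\lver\frac{\Lambda^{k}}{\mathfrak{P}}(\omega_{b}\cdot N^{\mu}_{b})\rver_{2}$ in terms of $\llver\omega\rrver_{H^{k,1}}$ and $\lver\frac{1}{\mathfrak{P}}(\omega_{b}\cdot N^{\mu}_{b})\rver_{2}$, not to inject the latter where it does not occur. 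Because taking the divergence annihilates $\psit$, your route in fact closes with only $\llver\omega\rrver_{H^{2,0}}$ and no bottom term, which is consistent with, and a priori slightly stronger than, the stated inequality.
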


\begin{proof}
\noindent The proof is a small adaptation of Lemma 3.23 in \cite{Castro_Lannes_vorticity}. 
\end{proof}

\noindent Finally, we give a result that is useful for the energy estimate. Since the proof is a little different to Corollary 3.21 in \cite{Castro_Lannes_vorticity}, we give it. Notice that the main difference with Corollary 3.21 in \cite{Castro_Lannes_vorticity} is the fact that we do not have a flat bottom.

\begin{prop}\label{speed_surface_control}
\noindent Let \upshape$N \geq 5$\itshape, \upshape$\zeta \in H^{N}(\RD)$\itshape, \upshape$b \in L^{\infty} \cap \Hdot^{N+1}(\RD)$ \itshape and \upshape$\omega \in H^{N-1}(\mathcal{S})$ \itshape such that \upshape$\nabla^{\sigma,\mu} \cdot \omega = 0$\itshape. Then, for \upshape$k=x,y \text{, } |\gamma| \leq N-1 \text{, } \alpha \text{ }$\itshape such that \upshape$\partial^{\alpha} = \partial_{k} \partial^{\gamma} \text{ and } \varphi \in H^{\frac{1}{2}}(\RD)$, we have

\upshape
\begin{equation*}
\begin{aligned}
\left(\hspace{-0.1cm} \varphi \text{, } \hspace{-0.1cm} \frac{1}{\mu} \partial_{k} \underline{\text{U}}^{\mu}_{\left( \gamma \right)} \hspace{-0.1cm}\cdot\hspace{-0.1cm} N^{\mu} \hspace{-0.1cm}\right) \hspace{-0.15cm} \leq \hspace{-0.05cm} M_{N} \hspace{-0.1cm} \left( \hspace{-0.15cm} \left\lvert \mathfrak{P} \psi \right\rvert_{H^{1}} \hspace{-0.1cm} + \hspace{-0.6cm} \underset{1 < \left\lvert\alpha^{'}\right\rvert \leq |\alpha|}{\sum} \hspace{-0.5cm} \left\lvert \mathfrak{P} \psi_{(\alpha^{'})} \right\rvert_{2} \hspace{-0.2cm} + \hspace{-0.1cm} \llver \omega \rrver_{H^{|\alpha|-1}} \hspace{-0.15cm} + \hspace{-0.1cm} \left\lvert \hspace{-0.05cm} \frac{1}{\mathfrak{P}} \hspace{-0.05cm} \left( \omega_{b} \cdot N^{\mu} \hspace{-0.05cm} \right) \right\rvert_{2} \hspace{-0.1cm} \right) \hspace{-0.1cm}  \times&\\
&\hspace{-2.8cm} \left[\left\lvert \mathfrak{P} \varphi \right\rvert_{2} + \left\lvert \frac{1}{\sqrt{1+\sqrt{\mu}|D|}} \varphi \right\rvert_{2} \right] \hspace{-0.1cm},
\end{aligned}
\end{equation*}
\itshape

\noindent where we denote by \upshape$\text{U}^{\mu} = \text{U}^{\sigma,\mu} [\epsilon \zeta, \beta b]$\itshape.
\end{prop}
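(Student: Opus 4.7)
The plan is to adapt the strategy of Corollary~3.21 in \cite{Castro_Lannes_vorticity}, introducing the extra care required to handle the non-flat bottom. Since $\partial^{\alpha} = \partial_k \partial^{\gamma}$ consists purely of horizontal derivatives, I first exploit the definition \eqref{alinhac_U} of the Alinhac good unknown to decompose
$$
\partial_k \underline{\text{U}}^{\mu}_{(\gamma)} = \underline{\text{U}}^{\mu}_{(\alpha)} - \epsilon \, \partial^{\gamma} \zeta \, \partial_k \bigl( \underline{\partial_z^{\sigma} \text{U}^{\mu}} \bigr).
$$
The remainder carries at most $|\gamma| \leq N-2$ derivatives of $\zeta$, so $\partial^{\gamma} \zeta \in L^{\infty}$; after dualizing against $\varphi$, it is estimated via the trace Lemma \ref{trace_lemma} and Theorem \ref{high_order_estimate_U}, and absorbed into the announced right-hand side through the factor $\lvert \varphi / \sqrt{1+\sqrt{\mu}|D|}\rvert_{2}$.

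For the main term $\frac{1}{\mu}(\varphi, \underline{\text{U}}^{\mu}_{(\alpha)} \cdot N^{\mu})$, I would construct an extension $\Phi \in H^{1}(\mathcal{S})$ of $\varphi$, built from a regularized Poisson kernel adapted to the anisotropic multiplier $\mathfrak{P}$, satisfying
$$
\sqrt{\mu}\,\llver \Phi \rrver_{2} + \llver \nabla^{\mu}_{X,z}\Phi \rrver_{2} \leq C \left( \sqrt{\mu}\,\lver \mathfrak{P}\varphi \rver_{2} + \left\lvert \frac{\varphi}{\sqrt{1+\sqrt{\mu}|D|}} \right\rvert_{2} \right).
$$
Applying the divergence theorem in the straightened strip, and recalling that $\sigma_{|z=-1}=\beta b$ so the bottom portion of $\partial \mathcal{S}$ carries the normal $N_{b}^{\mu}$, I obtain
$$
\int_{\RD} \varphi \, \underline{\text{U}}^{\mu}_{(\alpha)} \cdot N^{\mu} = \int_{\mathcal{S}} \nabla^{\sigma,\mu} \Phi \cdot \text{U}^{\mu}_{(\alpha)} (1+\partial_z \sigma) + \int_{\mathcal{S}} \Phi \, \nabla^{\sigma,\mu} \hspace{-0.05cm} \cdot \hspace{-0.05cm} \text{U}^{\mu}_{(\alpha)} (1+\partial_z \sigma) + \int_{\RD} \Phi_{b} \, (\text{U}^{\mu}_{(\alpha)})_{b} \cdot N^{\mu}_{b}.
$$
The two interior integrals are controlled by pairing the extension bounds with Theorem \ref{high_order_estimate_U}, which bounds $\llver \text{U}^{\mu}_{(\alpha)} \rrver_{2}$, and with the first inequality of Proposition \ref{div_curl_good_unknowns_control}, which delivers the matching bound on $\llver \nabla^{\sigma,\mu} \cdot \text{U}^{\mu}_{(\alpha)} \rrver_{2}$.

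The main new difficulty, absent in the flat-bottom case, is the bottom boundary integral. Using $\sigma_{|z=-1}=\beta b$, I decompose $(\text{U}^{\mu}_{(\alpha)})_{b} = \partial^{\alpha} \text{U}^{\mu}_{b} - \beta \, \partial^{\alpha} b \, (\partial_z^{\sigma} \text{U}^{\mu})_{b}$, and I invoke Leibniz on the identity $\text{U}^{\mu}_{b} \cdot N_{b}^{\mu} = 0$ to obtain
$$
\partial^{\alpha} \text{U}^{\mu}_{b} \cdot N^{\mu}_{b} = - \sum_{\alpha' < \alpha} \binom{\alpha}{\alpha'} \, \partial^{\alpha'} \text{U}^{\mu}_{b} \cdot \partial^{\alpha-\alpha'} N^{\mu}_{b},
$$
in which every summand carries at most $|\alpha|-1$ horizontal derivatives of $\text{U}^{\mu}_{b}$ and derivatives of $b$ of order at most $|\alpha|+1 \leq N+1$, which is admissible since $N \geq 5$ and $\beta\lvert \nabla b\rvert_{H^{N}}$ enters $M_N$. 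The term $\beta \partial^{\alpha} b \, (\partial_z^{\sigma} \text{U}^{\mu})_{b} \cdot N^{\mu}_{b}$ is estimated similarly by putting $\partial^{\alpha} b$ in $L^{2}$ and the trace of $\partial_z^{\sigma} \text{U}^{\mu}$ in $L^{\infty}$ via Sobolev embedding and Theorem \ref{high_order_estimate_U}. Pairing these controls with the $L^{2}$-trace bound on $\Phi_{b}$ from Lemma \ref{trace_lemma} closes the bottom integral into the announced right-hand side.

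The hard part I expect is the construction and analysis of the extension $\Phi$: it must deliver the sharp $L^{2}$ and $H^{1}$ bounds dual to the norm $\sqrt{\mu}\lvert \mathfrak{P}\varphi \rvert_{2} + \lvert \varphi/\sqrt{1+\sqrt{\mu}|D|} \rvert_{2}$ simultaneously, which is a half-strip Fourier computation of the same flavor as in Lemmas 5.4 and 5.6 of \cite{Castro_Lannes_vorticity}. The non-flat bottom does not introduce new obstructions at this step once the divergence-theorem decomposition above is in place.
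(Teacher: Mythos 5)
Your proposal follows essentially the same route as the paper's proof: splitting $\partial_k\underline{\text{U}}^{\mu}_{(\gamma)}$ into $\underline{\text{U}}^{\mu}_{(\alpha)}$ plus a lower-order commutator term controlled via the trace Lemma; pairing $\underline{\text{U}}^{\mu}_{(\alpha)} \cdot N^{\mu}$ against $\varphi$ through the divergence theorem in the strip with an extension $\varphi^{\dag} = \chi(z\sqrt{\mu}|D|)\varphi$ (the paper takes exactly this regularized Poisson kernel, citing Lemmas 2.20 and 2.34 in \cite{Lannes_ww}); controlling the two interior integrals with Theorem \ref{high_order_estimate_U} and the first inequality of Proposition \ref{div_curl_good_unknowns_control}; and treating the extra bottom boundary integral by writing $(\text{U}^{\mu}_{(\alpha)})_b = \partial^{\alpha}\text{U}^{\mu}_b - \beta\partial^{\alpha}b\,(\partial_z^{\sigma}\text{U}^{\mu})_b$ and using $\text{U}^{\mu}_b\cdot N^{\mu}_b = 0$ to reduce $\partial^{\alpha}\text{U}^{\mu}_b\cdot N^{\mu}_b$ to a Leibniz commutator $\mu\beta[\nabla b,\partial^{\alpha}]\cdot\text{V}_b$. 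One caution: your stated extension bound $\sqrt{\mu}\,\llver\Phi\rrver_2 + \llver\nabla^{\mu}_{X,z}\Phi\rrver_2 \lesssim \sqrt{\mu}\lver\mathfrak{P}\varphi\rver_2 + \lver\varphi/\sqrt{1+\sqrt{\mu}|D|}\rver_2$ is too weak as written — the pairing of $\nabla^{\mu}\Phi$ against $\text{U}^{\mu}_{(\alpha)}$ needs the \emph{separate} estimate $\llver\nabla^{\mu}_{X,z}\Phi\rrver_2 \lesssim \sqrt{\mu}\lver\mathfrak{P}\varphi\rver_2$, which your combined bound does not recover (it would allow $\llver\nabla^{\mu}\Phi\rrver_2$ to be merely $O(\lver\varphi/\sqrt{1+\sqrt{\mu}|D|}\rver_2)$, leaving a spurious $\mu^{-1/2}$ after dividing by $\mu$); the rest of your text makes clear you intend the two separate bounds the paper uses, but the statement should be made explicit.
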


\begin{proof}
\noindent Notice that when $\gamma \neq 0$,

\begin{equation*}
\partial_{k} \text{U}^{\mu}_{(\gamma)} = \text{U}^{\mu}_{(\alpha)} - \partial^{\gamma} \sigma \partial_{k} \partial_{z}^{\sigma} \text{U}^{\mu}.
\end{equation*}

\noindent Then, using Lemma \ref{boundary_control}, it is easy to check that

\begin{equation*}
\left(\varphi, \underline{\partial^{\gamma} \sigma \partial_{k} \partial_{z}^{\sigma} \text{U}^{\mu}} \cdot N^{\mu} \right) \leq M_{N} \left\lvert \frac{1}{\sqrt{1+\sqrt{\mu}|D|}} \varphi \right\rvert_{2} \llver \nabla^{\mu}_{\! X,z} \text{U}^{\mu} \rrver_{H^{2}}.
\end{equation*}

\noindent Furthermore, using the Green identity we get

\begin{equation*}
\left(\varphi \text{,} \text{U}^{\mu}_{(\alpha)} \cdot N^{\mu} \right) \hspace{-0.15cm} = \hspace{-0.15cm} \int_{\mathcal{S}} \hspace{-0.15cm} \left(1+\partial_{z} \sigma \right) \varphi^{\dag} \nabla^{\sigma, \mu}_{\! X,z} \cdot \text{U}^{\mu}_{(\alpha)} + \int_{\mathcal{S}} \hspace{-0.15cm} \left(1+\partial_{z} \sigma \right) \text{U}^{\mu}_{(\alpha)} \cdot \nabla^{\sigma, \mu}_{\! X,z} \varphi^{\dag} + \left(\varphi^{\dag}_{b} \text{,} \left(\text{U}^{\mu}_{(\alpha)} \right)_{b} \cdot N^{\mu}_{b} \right),
\end{equation*}

\noindent where $\varphi^{\dag} =\chi \left(z \sqrt{\mu} |D| \right) \varphi$ and $\chi$ is an even positive compactly supported function equal to $1$ near $0$. Then, using the fact that $\text{U}^{\mu}_{b} \cdot N^{\mu}_{b} = 0$ and the trace Lemma, we get

\begin{align*}
\left(\varphi^{\dag}_{b} \text{ , } \left(\text{U}^{\mu}_{(\alpha)} \right)_{b} \cdot N^{\mu}_{b} \right) &= \left(\chi(\sqrt{\mu} |D|) \varphi \text{ , } \partial^{\alpha} \text{U}^{\mu}_{b} \cdot N^{\mu}_{b} - \beta \partial^{\alpha} b \left( \partial_{z}^{\sigma} \text{U}^{\mu} \right)_{b} \cdot N^{\mu}_{b} \right)\\
&= \left(\chi(\sqrt{\mu} |D|) \varphi \text{ , } \mu \beta \left[\nabla b \text{, } \partial^{\alpha} \right] \cdot \text {V}_{b} - \beta \partial^{\alpha} b \left( \partial_{z}^{\sigma} \text{U}^{\mu} \right)_{b} \cdot N^{\mu}_{b} \right)\\
&\leq M_{N} \left(\sqrt{\mu} \llver \text{U}^{\mu} \rrver_{H^{N}} + \llver \text{U}^{\mu} \rrver_{H^{2,2}} \right) \left\lvert \chi(\sqrt{\mu} |D|) \varphi \right\rvert_{2}.
\end{align*}

\noindent Therefore, using Proposition \ref{div_curl_good_unknowns_control}, Theorem \ref{high_order_estimate_U} and the following Lemma (Lemma 2.20 and Lemma 2.34 in \cite{Lannes_ww}) we get the control.

\begin{lemma}
\noindent Let $\varphi \in H^{\frac{1}{2}}(\RD)$ and $\chi$ an even positive compactly supported function equal to $1$ near $0$. Then,

\upshape
\begin{equation*}
\llver \chi \left(z \sqrt{\mu} |D| \right) \varphi \rrver_{2} \leq C \left\lvert \frac{1}{\sqrt{1+\sqrt{\mu}|D|}} \varphi \right\rvert_{2} \text{ and } \llver \nabla^{\mu}_{\! X,z} \left(\chi \left(z \sqrt{\mu} |D| \right) \varphi \right) \rrver_{2} \leq C \sqrt{\mu} \left\lvert \mathfrak{P} \varphi \right\rvert_{2}.
\end{equation*}
\itshape
\end{lemma}
\end{proof}

\section{Well-posedness of the water waves equations}

\subsection{Framework}\label{framework_ww}

\noindent In this section, we prove a local well-posedness result of the water waves equations. We improve the result of \cite{Castro_Lannes_vorticity} by adding a non flat bottom, a non constant pressure at the surface and a Coriolis forcing. In order to work on a fixed domain, we seek a system of $3$ equations on $\zeta$, $\psi$ and $\omega = \bm{\omega} \circ \Sigma$. We keep the first and the second equations of the Castro-Lannes formulation \eqref{Castro_lannes_formulation}. It is easy to check that $\omega$ satisfies

\begin{equation}\label{straight_vorticity_eq}
\partial_{t}^{\sigma} \omega + \frac{\epsilon}{\mu} \left( \text{U}^{\mu} \cdot \nabla_{\! X,z}^{\sigma,\mu}  \right) \omega \hspace{-0.05cm} = \hspace{-0.05cm} \frac{\epsilon}{\mu} \left( \omega \cdot \nabla^{\sigma, \mu}_{\! X,z} \right) \text{U}^{\mu}  + \frac{\epsilon}{\mu \text{Ro}} \partial_{z}^{\sigma} \text{U}^{\mu},
\end{equation}

\noindent where $\text{U}^{\mu} = \text{U}^{\sigma,\mu}[\epsilon \zeta, \beta b]$. Then, in the following the water waves equations will be the system

\small
\begin{equation}\label{Castro_lannes_formulation_straight}
\left\{
\begin{aligned}
&\hspace{-0.05cm} \partial_{t} \zeta - \frac{1}{\mu} \underline{\text{U}}^{\mu} \cdot N^{\mu} = 0,\\
&\hspace{-0.05cm} \partial_{t} \psi \hspace{-0.05cm} + \hspace{-0.05cm} \zeta \hspace{-0.05cm} + \hspace{-0.05cm} \frac{\epsilon}{2} \hspace{-0.05cm} \left\lvert  \text{U}^{\mu}_{\! \sslash} \right\rvert^{2} \hspace{-0.15cm} - \hspace{-0.05cm} \frac{\epsilon}{2 \mu} \hspace{-0.05cm} \left(\hspace{-0.05cm} 1 + \hspace{-0.05cm} \epsilon^{2} \mu \left\lvert \nabla \zeta \right\rvert^{2} \hspace{-0.05cm} \right) \! \underline{\text{w}}^{2} \hspace{-0.1cm} + \hspace{-0.1cm} \epsilon \frac{\nabla}{\Delta} \hspace{-0.1cm} \cdot \hspace{-0.1cm} \left[ \left( \underline{\omega} \hspace{-0.05cm} \cdot \hspace{-0.05cm} N^{\mu} \hspace{0.05cm} + \frac{1}{\text{Ro}} \right) \underline{\text{V}}^{\perp} \right] = - P,\\
&\hspace{-0.05cm} \partial_{t}^{\sigma} \omega + \frac{\epsilon}{\mu} \left( \text{U}^{\mu} \cdot \nabla_{\! X,z}^{\sigma,\mu} \right) \omega \hspace{-0.05cm} = \hspace{-0.05cm} \frac{\epsilon}{\mu} \left( \omega \cdot \nabla^{\sigma, \mu}_{\! X,z} \right) \text{U}^{\mu}  + \frac{\epsilon}{\mu \text{Ro}} \partial_{z}^{\sigma} \text{U}^{\mu}.
\end{aligned}
\right.
\end{equation}
\normalsize

\noindent The following quantity is the energy that we will use to get the local wellposedness

\begin{equation*}
\mathcal{E}^{N} \left( \zeta, \psi, \omega \right) = \frac{1}{2} \left\lvert \zeta \right\rvert_{H^{N}}^{2} + \frac{1}{2} \left\lvert \mathfrak{P} \psi \right\rvert^{2}_{H^{3}} + \frac{1}{2} \underset{1 \leq |\alpha| \leq N}{\sum} \left\lvert \mathfrak{P} \psia \right\rvert_{2}^{2} + \frac{1}{2} \llver \omega \rrver_{H^{N-1}}^{2} +  \frac{1}{2} \left\lvert \frac{1}{\mathfrak{P}} \left( \omega_{b} \cdot N_{b}^{\mu} \right) \right\rvert_{2}^{2},
\end{equation*} 

\noindent where we recall that $\psia$ is given by \eqref{alinhac_psi}. For $T \geq 0$, we also introduce the energy space

\begin{equation*}
\text{E}^{N}_{T} = \left\lbrace \left( \zeta, \psi, \omega \right) \in \mathcal{C} \left( [0,T], H^{2}(\RD) \times \Hdot^{2}(\RD) \times H^{2}(\mathcal{S}) \right), \mathcal{E}^{N} \hspace{-0.1cm} \left( \zeta, \psi, \omega \right) \in L^{\infty}([0,T]) \right\rbrace.
\end{equation*} 

\noindent We also recall that $M_{N}$ is defined in \eqref{M_N_def}. We keep the organization of the section 4 in \cite{Castro_Lannes_vorticity}. First, we give an a priori estimate for the vorticity. Then, we explain briefly how we can quasilinearize the system and how we obtain a priori estimates for the full system. The last part of this section is devoted to the proof of the main result.

\subsection{A priori estimate for the vorticity}

\noindent In this part, we give a priori estimate for the straightened equation of the vorticity.

\begin{prop}\label{vorticity_estimate}
\noindent Let \upshape$N \geq 5$, $T > 0$, $b \in L^{\infty} \cap \Hdot^{N+1}(\RD) $\itshape and \upshape$(\zeta, \psi, \omega)\in \text{E}^{N}_{T}$ \itshape such that \eqref{straight_vorticity_eq} and Condition \eqref{nonvanishing} hold on \upshape$\left[0, T \right]$\itshape. We also assume that on \upshape$\left[0,T \right]$\itshape

\upshape
\begin{equation*}
\partial_{t} \zeta - \frac{1}{\mu} \text{U}^{\sigma,\mu} [\epsilon \zeta, \beta b] \cdot N^{\mu} = 0.
\end{equation*}
\itshape 

\noindent Then, 

\upshape
\begin{equation*}
\frac{d}{dt} \left( \llver \omega \rrver_{H^{N-1}}^{2} + \left\lvert \frac{1}{\mathfrak{P}} \left( \omega_{b} \cdot N_{b}^{\mu} \right) \right\rvert_{2}^{2} \right) \leq M_{N} \left( \epsilon \mathcal{E}^{N} \hspace{-0.1cm} \left( \zeta, \psi, \omega \right)^{\frac{3}{2}} + \max \left(\epsilon, \frac{\epsilon}{\text{Ro}} \right) \mathcal{E}^{N} \hspace{-0.1cm} \left( \zeta, \psi, \omega \right) \right).
\end{equation*}
\itshape
\end{prop}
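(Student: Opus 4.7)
The plan is a direct energy method for the straightened vorticity equation \eqref{straight_vorticity_eq}, estimating separately the interior piece $\llver \omega \rrver_{H^{N-1}}^2$ and the boundary piece $\lver \frac{1}{\mathfrak{P}}(\omega_{b} \cdot N_{b}^{\mu}) \rver_{2}^{2}$. For the interior piece, for every multi-index $\alpha$ with $|\alpha| \leq N-1$ I would apply $\partial^{\alpha}$ to \eqref{straight_vorticity_eq} and rewrite the result in terms of the Alinhac good unknown $\omega_{(\alpha)} = \partial^{\alpha} \omega - \partial^{\alpha} \sigma \, \partial_{z}^{\sigma} \omega$ defined in \eqref{alinhac_U}, producing a transported equation of the form
\begin{equation*}
\partial_{t}^{\sigma} \omega_{(\alpha)} + \frac{\epsilon}{\mu}\bigl(\text{U}^{\mu} \cdot \nabla^{\sigma,\mu}_{X,z}\bigr) \omega_{(\alpha)} = \frac{\epsilon}{\mu}\partial^{\alpha}\bigl[(\omega \cdot \nabla^{\sigma,\mu}_{X,z})\text{U}^{\mu}\bigr] + \frac{\epsilon}{\mu \, \text{Ro}}\partial^{\alpha} \partial_{z}^{\sigma} \text{U}^{\mu} + R_{\alpha},
\end{equation*}
where $R_{\alpha}$ gathers the commutators produced when $\partial^{\alpha}$ crosses $\partial_{t}^{\sigma}$ and the transport operator $\text{U}^{\mu} \cdot \nabla^{\sigma,\mu}$. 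Testing against $(1+\partial_{z} \sigma)\omega_{(\alpha)}$ and integrating over $\mathcal{S}$ annihilates the transport contribution: in the bulk because $\nabla^{\sigma,\mu}\cdot\text{U}^{\mu} = 0$, on the free surface because the kinematic equation $\partial_{t} \zeta = \tfrac{1}{\mu}\underline{\text{U}}^{\mu}\cdot N^{\mu}$ cancels the $\partial_{t} \sigma$ boundary term, and on the bottom because $\text{U}^{\mu}_{b}\cdot N^{\mu}_{b} = 0$.

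The source terms on the right are then bounded by the standard tame product and commutator estimates together with Theorem \ref{high_order_estimate_U}, which controls $\llver \nabla^{\mu}_{X,z} \text{U}^{\mu} \rrver_{H^{N-2,1}}$ by $\sqrt{\mathcal{E}^{N}}$ up to $M_{N}$ factors. The stretching term is quadratic in $(\omega, \nabla \text{U}^{\mu})$, producing the $\epsilon \, \mathcal{E}^{N}(\cdot)^{3/2}$ contribution, while the Coriolis source is linear in the velocity and yields the $\tfrac{\epsilon}{\text{Ro}} \, \mathcal{E}^{N}$ contribution; the apparently singular prefactor $\mu^{-1}$ is absorbed because $\partial_{z}^{\sigma} \text{U}^{\mu}$ itself is of size $\mu$ by Theorem \ref{high_order_estimate_U}. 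Passing from $\sum_{|\alpha| \leq N-1}\llver \omega_{(\alpha)}\rrver_{2}^{2}$ back to $\llver \omega \rrver_{H^{N-1}}^{2}$ only costs lower-order terms absorbed by $M_{N}\, \epsilon \, \mathcal{E}^{N}(\cdot)^{3/2}$.

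For the bottom piece I would differentiate $\lver \frac{1}{\mathfrak{P}}(\omega_{b} \cdot N_{b}^{\mu})\rver_{2}^{2}$ in time directly, using that $N_{b}^{\mu}$ does not depend on $t$, and take the trace of \eqref{straight_vorticity_eq} at $z = -1$. Dotting with $N_{b}^{\mu}$, the transport reduces to a purely horizontal transport of the scalar $\omega_{b} \cdot N_{b}^{\mu}$ in $\RD$ thanks to $\text{U}^{\mu}_{b}\cdot N^{\mu}_{b} = 0$, sourced only by the bottom traces of the stretching and Coriolis contributions. To estimate this in the $H^{-1/2}_{\ast}$ norm induced by $\frac{1}{\mathfrak{P}}$ I would use the structural bottom identity analogous to \eqref{Uss_curl},
\begin{equation*}
\omega_{b} \cdot N_{b}^{\mu} = \nabla^{\perp} \cdot \bigl(\text{V}_{b} + \beta \text{w}_{b} \nabla b\bigr),
\end{equation*}
which relocates one derivative outside the non-local operator and reduces the required bound to an $L^{2}$ estimate of $\sqrt{1+\sqrt{\mu}|D|}\,(\text{V}_{b} + \beta \text{w}_{b}\nabla b)$, supplied by Lemma \ref{boundary_control} together with Theorem \ref{high_order_estimate_U}.

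The main obstacle is precisely this last step: $\frac{1}{\mathfrak{P}}$ is a non-local Fourier multiplier that does not commute with the multiplications by $\partial^{\alpha}\sigma$ and $\nabla b$ generated when differentiating the stretching term and the good-unknown correction at the bottom, so a naive $L^{2}$ trace estimate would lose too much regularity and would not distinguish the expected $\epsilon$ and $\tfrac{\epsilon}{\text{Ro}}$ prefactors. The remedy is the identity displayed above, which transmutes the singular $\frac{1}{\mathfrak{P}}$ into the tame factor $\sqrt{1+\sqrt{\mu}|D|}$ acting on a horizontal divergence; after this structural move, the commutator, product and trace estimates of Section \ref{transformed_divcurl} close the argument and deliver the bound announced in the statement.
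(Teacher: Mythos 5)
Your interior estimate and your bottom estimate deserve separate comments, because they diverge from the paper in different ways and only the second one has a real gap.

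For the interior piece the paper does not introduce the Alinhac good unknown $\omega_{(\alpha)}$ for the vorticity. It simply rewrites the straightened equation in the transport form
\begin{equation*}
\partial_t\omega + \epsilon(\text{V}\cdot\nabla_X)\omega + \frac{\epsilon}{\mu}\,a\,\partial_z\omega
 = \frac{\epsilon}{\mu}(\omega\cdot\nabla^{\sigma,\mu}_{X,z})\text{U}^{\mu}
 + \frac{\epsilon}{\mu\,\text{Ro}}\partial_z^{\sigma}\text{U}^{\mu},
\end{equation*}
with $\underline{a}=a_b=0$, does the $L^2$ estimate by parts, and for higher orders differentiates naively and invokes the commutator and product estimates of the appendix together with Theorem~\ref{high_order_estimate_U}. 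The good unknown is not needed here because $\omega$ sits one derivative lower than $\zeta$ and $\psi$ in $\mathcal{E}^{N}$, so the commutators produced by $\partial^{\alpha}$ crossing $\partial_z^{\sigma}$ stay within the energy. Your version is not wrong, but it is heavier machinery than the argument requires.

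The bottom piece is where the proposal genuinely fails. The paper's key observation is that taking the trace of the vorticity equation and dotting with $N_b^{\mu}$ produces the conservation law
\begin{equation*}
\partial_t\bigl(\omega_b\cdot N_b^{\mu}\bigr) + \epsilon\,\nabla\cdot\!\left(\Bigl[\omega_b\cdot N_b^{\mu} + \tfrac{1}{\text{Ro}}\Bigr]\text{V}_b\right)=0,
\end{equation*}
i.e.\ the transport and the stretching and Coriolis sources \emph{combine} into a pure horizontal divergence with the full $\epsilon$ and $\tfrac{\epsilon}{\text{Ro}}$ prefactors attached. This is exactly what allows $\frac{1}{\mathfrak{P}}\partial_t(\omega_b\cdot N_b^{\mu})$ to be bounded by $\epsilon\lvert\sqrt{1+\sqrt{\mu}\lvert D\rvert}\,[\omega_b\cdot N_b^{\mu}+\tfrac{1}{\text{Ro}}]\text{V}_b\rvert_2$ and close the estimate with the stated rates. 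Your proposal replaces this dynamical fact by the \emph{static} constraint $\omega_b\cdot N_b^{\mu}=\nabla^{\perp}\cdot(\text{V}_b+\beta\text{w}_b\nabla b)$. That identity certifies that $\omega_b\cdot N_b^{\mu}$ lies in $H^{-1/2}_{\ast}$, but it does not tell you that its \emph{time derivative} is a divergence of a quantity carrying an $\epsilon$ (or $\epsilon/\text{Ro}$) prefactor. If you differentiate the identity in $t$, you are reduced to bounding $\sqrt{1+\sqrt{\mu}\lvert D\rvert}\,\partial_t(\text{V}_b+\beta\text{w}_b\nabla b)$; by Proposition~\ref{time_derivative_control} and the second equation of \eqref{Castro_lannes_formulation_straight}, $\partial_t\text{V}_b$ contains contributions from $\partial_t\psi$ (and hence from $\zeta$ and $P$) that are of size $O(1)$, not $O(\epsilon)$, so this route cannot recover the factors $\epsilon$ and $\tfrac{\epsilon}{\text{Ro}}$ announced in the conclusion. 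Moreover, your concluding sentence asks for an $L^2$ bound on $\sqrt{1+\sqrt{\mu}\lvert D\rvert}(\text{V}_b+\beta\text{w}_b\nabla b)$ itself rather than on its time derivative, which does not match the quantity you are differentiating. What is missing is precisely the identification of the conservation form above.
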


\begin{proof}
\noindent We denote $\text{U}^{\sigma,\mu} [\epsilon \zeta, \beta b] = \text{U}^{\mu} = \begin{pmatrix} \sqrt{\mu} \text{V} \\ \text{w} \end{pmatrix}$. We can reformulate Equation \eqref{straight_vorticity_eq} as

\begin{equation*}
\partial_{t} \omega + \epsilon \left( \text{V} \cdot \nabla_{\! X} \right) \omega + \frac{\epsilon}{\mu} a \partial_{z} \omega = \frac{\epsilon}{\mu} \left( \omega \cdot \nabla^{\sigma, \mu}_{\! X,z} \right) \text{U}^{\mu} + \frac{\epsilon}{\mu \text{Ro}} \partial_{z}^{\sigma} \text{U}^{\mu},
\end{equation*}

\noindent where 

\begin{equation*}
a = \frac{1}{1+\partial_{z} \sigma} \left( \text{U}^{\mu} \cdot \begin{pmatrix} - \sqrt{\mu} \nabla_{\! X} \sigma \\ 1 \end{pmatrix} - (z+1) \underline{\text{U}}^{\mu} \cdot N^{\mu} \right).
\end{equation*}

\noindent Notice that $\underline{a} = a_{b} = 0$. Then, we get

\begin{equation*}
\partial_{t} \llver \omega \rrver_{2}^{2} = \epsilon \int_{S} \left( \nabla_{\! X} \cdot \text{V} + \frac{1}{\mu} \partial_{z} a \right) \omega^{2} + \frac{2}{\mu} \left(\omega \cdot \nabla^{\sigma,\mu}_{\! X,z} \right) \text{U}^{\mu} \cdot \omega + \frac{1}{\text{Ro}} \partial_{z}^{\sigma} \text{U}^{\mu} \cdot \omega,
\end{equation*}

\noindent and

\begin{equation*}
\begin{aligned}
\partial_{t} \llver \omega \rrver_{2}^{2} \leq \hspace{-0.05cm} \frac{\epsilon}{\mu} C \hspace{-0.1cm} \left( \frac{1}{h_{\min}}, \epsilon \left\lvert \zeta \right\rvert_{W^{1,\infty}} \hspace{-0.05cm} , \beta \left\lvert b \right\rvert_{W^{1,\infty}} \hspace{-0.1cm} \right) \hspace{-0.1cm} &\Big( \left[ \llver \nabla^{\mu}_{\! X,z} \text{U}^{\mu} \rrver_{\infty} \hspace{-0.3cm} + \sqrt{\mu} \llver \text{U}^{\mu} \rrver_{\infty} \right] \llver \omega \rrver_{2}^{2} \\
&\hspace{3cm} + \frac{1}{\text{Ro}} \llver \nabla^{\mu}_{\! X,z} \text{U}^{\mu} \rrver_{\infty} \llver \omega \rrver_{2} \Big) \hspace{-0.05cm},
\end{aligned}
\end{equation*}

\noindent where we use the fact that

\begin{equation*}
\left\lvert \underline{\text{U}}^{\mu} \cdot N^{\mu} \right\rvert_{L^{\infty}} \leq C \left(\epsilon \left\lvert \zeta \right\rvert_{W^{1,\infty}},\beta \left\lvert b \right\rvert_{W^{1,\infty}} \right) \left(\llver \partial_{z} \text{U}^{\mu} \rrver_{\infty} + \sqrt{\mu} \llver \text{U}^{\mu} \rrver_{\infty} \right).
\end{equation*}

\noindent The estimate for the $L^{2}$-norm of $\omega$ follows thanks to Theorem, \ref{control_U}, Theorem \ref{high_order_estimate_U} and Remark \ref{control_omega_bott}. For the high order estimates, we differentiate Equation \eqref{straight_vorticity_eq} and we easily obtain the control thanks to Theorem \ref{high_order_estimate_U} and Remark \ref{control_omega_bott} (see the proof of Proposition 4.1 in \cite{Castro_Lannes_vorticity}). Finally, taking the trace at the bottom of the vorticity equation in System \eqref{Castro_lannes_formulation}, we get the following equation for $\omega_{b} \cdot N_{b}^{\mu}$,

\begin{equation}
\partial_{t} \left( \omega_{b} \cdot N_{b}^{\mu} \right) + \epsilon \nabla \cdot \left( \left[\omega_{b} \cdot N_{b}^{\mu} + \frac{1}{\text{Ro}} \right] \text{V}_{b} \right) = 0,
\end{equation}

\noindent and then,

\begin{equation*}
\partial_{t} \left\lvert \frac{1}{\mathfrak{P}} \left( \omega_{b} \cdot N^{\mu}_{b} \right) \right\rvert_{2}^{2} \leq 2 \epsilon \left\lvert \sqrt{1+\sqrt{\mu} |D|} \left(\left[\omega_{b} \cdot N_{b}^{\mu} + \frac{1}{\text{Ro}} \right] \text{V}_{b} \right) \right\rvert_{2} \left\lvert \frac{1}{\mathfrak{P}} \left( \omega_{b} \cdot N^{\mu}_{b} \right) \right\rvert_{2}.
\end{equation*}

\noindent The control follows easily thanks to and Lemma \ref{boundary_control}, Theorem \ref{control_U}, Theorem \ref{high_order_estimate_U} and Remark \ref{control_omega_bott}.
\end{proof}

\begin{remark}\label{transport_eq_vort_surf}
\noindent Notice that we can also take the trace at the surface of the vorticity equation and we obtain a transport equation for $\underline{\omega} \cdot N^{\mu}$,

\upshape
\begin{equation}
\partial_{t} \left( \underline{\omega} \cdot N^{\mu} \right) + \epsilon \nabla \cdot \left( \left[\underline{\omega} \cdot N^{\mu} + \frac{1}{\text{Ro}} \right] \underline{\text{V}} \right) = 0.
\end{equation}
\itshape

\end{remark}

\subsection{Quasilinearization and a priori estimates}

\noindent In this part, we quasilinearize the system \eqref{Castro_lannes_formulation}. We introduce the Rayleigh-Taylor coefficient 

\begin{equation}\label{rayleigh_taylor_coefficient}
\mathfrak{a}: = \mathfrak{a}[\epsilon \zeta, \beta b](\psi, \omega) = 1 + \epsilon \left( \partial_{t} + \epsilon \underline{\text{V}}[\epsilon \zeta, \beta b](\psi, \omega) \cdot \nabla \right) \underline{\text{w}}[\epsilon \zeta, \beta b](\psi, \omega).
\end{equation}

\noindent It is well-known that the positivity of this quantity is essential for the wellposedness of the water waves equations (see for instance Remark 4.17 in \cite{Lannes_ww} or \cite{ebin_illposedness}). Thanks to Equation \eqref{eq_int1}, we can easily adapt Part 4.3.5 in \cite{Lannes_ww} and check that the positivity of $\mathfrak{a}$ is equivalent to the classical Rayleigh-Taylor criterion (\cite{Rayleigh-Taylor})

\begin{equation*}
\underset{\RD}{\inf} \left(- \partial_{z} \mathcal{P}_{|z=\epsilon \zeta} \right) > 0,
\end{equation*}

\noindent where we recall that $\mathcal{P}$ is the pressure in the fluid domain. We can now give a quasilinearization of \eqref{Castro_lannes_formulation_straight}. We recall that the notation $\underline{\text{U}}^{\mu}_{(\alpha)}$ is defined in \eqref{alinhac_U} and $\psia$ is defined in \eqref{alinhac_psi}.

\begin{prop}\label{quasilinearization}
\noindent Let \upshape$N \geq 5$, $T > 0$, $b \in L^{\infty} \cap \Hdot^{N+1}(\RD)$, $P \in L^{\infty}_{t} \left( \R^{+} ; \Hdot^{N+1}_{\! X} \left(\RD \right) \right)$ \itshape and \upshape $\left( \zeta, \psi, \omega \right) \in E^{N}_{T}$ \itshape solution of the system \eqref{Castro_lannes_formulation_straight} such that \upshape$\left(\zeta, b \right)$ \itshape satisfy Condition \eqref{nonvanishing} on \upshape$[0,T]$\itshape. Then, for \upshape$\alpha, \gamma \in \mathbb{N}^{d}$ \itshape and for \upshape$k \in \left\lbrace x,y \right\rbrace$ \itshape such that \itshape$ \partial^{\alpha} = \partial_{k} \partial^{\gamma}$ \upshape and \upshape$|\gamma| \leq N-1$\itshape, we have the following quasilinearization

\upshape
\begin{equation}\label{quasilinearized_system}
\begin{aligned}
&\left( \partial_{t} + \epsilon \underline{\text{V}} \cdot \nabla \right) \partial^{\alpha} \zeta - \frac{1}{\mu} \partial_{k} \underline{\text{U}}^{\mu}_{(\gamma)} \cdot N^{\mu} = R^{1}_{\alpha},\\
&\left( \partial_{t} + \epsilon \underline{\text{V}} \cdot \nabla \right) \left(\underline{\text{U}}^{\mu}_{(\gamma) \sslash} \cdot e_{\textbf{k}} \right) + \mathfrak{a} \partial^{\alpha} \zeta = - \partial^{\alpha} P + R^{2}_{\alpha},
\end{aligned}
\end{equation}
\itshape

\noindent where 

\upshape
\begin{equation}\label{control_remainder}
\left\lvert R^{1}_{\alpha} \right\rvert_{2} \hspace{-0.05cm} + \hspace{-0.05cm} \left\lvert R^{2}_{\alpha} \right\rvert_{2} \hspace{-0.05cm} + \hspace{-0.05cm} \left\lvert \mathfrak{P} R^{2}_{\alpha} \right\rvert_{2} \hspace{-0.05cm} \leq M_{N} \left(\max \left(\epsilon, \frac{\epsilon}{\text{Ro}} \right) \mathcal{E}^{N} \hspace{-0.1cm} \left( \zeta, \psi, \omega \right) + \frac{\epsilon}{\text{Ro}} \sqrt{\mathcal{E}^{N} \hspace{-0.1cm} \left( \zeta, \psi, \omega \right)} \right).
\end{equation}
\itshape

\end{prop}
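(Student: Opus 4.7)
\noindent The plan is to follow the strategy of Proposition 4.5 of \cite{Castro_Lannes_vorticity}, which deals with the flat-bottom, no-Coriolis, no-surface-pressure situation, and to identify the additional contributions produced here by the Coriolis forcing, the non-flat bottom and the non constant surface pressure $P$; all three types of new terms will be absorbed into the remainders $R^{1}_{\alpha},R^{2}_{\alpha}$. The two structural ingredients I would use are the Alinhac good unknowns $\psia$ and $\Umua$ defined in \eqref{alinhac_psi}--\eqref{alinhac_U}, whose role is precisely to cancel the top-order $\zeta$-derivatives produced when one differentiates a trace at the free surface, thus avoiding a loss of derivatives. The third equation of \eqref{Castro_lannes_formulation_straight} is not touched here, its a priori estimate being supplied by Proposition \ref{vorticity_estimate}.

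\medskip

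\noindent For the kinematic equation, I would apply $\partial^{\alpha} = \partial_{k}\partial^{\gamma}$ to $\partial_{t}\zeta = \frac{1}{\mu}\underline{\text{U}}^{\mu}\cdot N^{\mu}$. Using the identity
\begin{equation*}
\partial^{\alpha}\bigl(\underline{\text{U}}^{\mu}\cdot N^{\mu}\bigr) = \partial_{k}\bigl(\underline{\text{U}}^{\mu}_{(\gamma)}\cdot N^{\mu}\bigr) + \epsilon\,\partial^{\alpha}\zeta\;\underline{\partial_{z}^{\sigma}\text{U}^{\mu}}\cdot N^{\mu} + (\text{lower order commutators}),
\end{equation*}
together with $\text{div}^{\sigma,\mu}\text{U}^{\mu}=0$ and the unperturbed equation itself, one recognises the material derivative $(\partial_{t}+\epsilon\underline{\text{V}}\cdot\nabla)\partial^{\alpha}\zeta$ on the left-hand side; the only surviving principal piece is $\frac{1}{\mu}\partial_{k}\underline{\text{U}}^{\mu}_{(\gamma)}\cdot N^{\mu}$, and every remaining term is sent to $R^{1}_{\alpha}$.

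\medskip

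\noindent For the Bernoulli-type equation, I would not differentiate the scalar $\psi$-equation directly but rather its vectorial form \eqref{eq_int2}; I project on $e_{\textbf{k}}$ and apply $\partial^{\gamma}$. The time derivative produces $\partial_{t}(\underline{\text{U}}^{\mu}_{(\gamma)\sslash}\cdot e_{\textbf{k}})$ plus commutators; the transport piece $\epsilon \underline{\text{V}}\cdot\nabla$ comes from combining the commutator term $\epsilon\partial^{\alpha}\zeta\,(\partial_{t}+\epsilon\underline{\text{V}}\cdot\nabla)\underline{\text{w}}$ with the first equation of \eqref{Castro_lannes_formulation_straight}; and the Rayleigh--Taylor factor $\mathfrak{a}$ materialises exactly through its defining formula \eqref{rayleigh_taylor_coefficient}. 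The new Coriolis piece $\epsilon\frac{\nabla}{\Delta}\cdot[\frac{1}{\text{Ro}}\underline{\text{V}}^{\perp}]$ and the vorticity contribution $\underline{\omega}\cdot N^{\mu}$ enter $R^{2}_{\alpha}$ after $\partial^{\alpha}$-differentiation, using the smoothing of $\frac{\nabla}{\Delta}$ and Proposition \ref{div_curl_good_unknowns_control}; the pressure gradient $-\partial^{\alpha}P$ is the only term kept explicit on the right-hand side.

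\medskip

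\noindent Finally, the bounds in \eqref{control_remainder} follow by combining Moser-type product estimates in $H^{s}$, the trace lemmas (Lemma \ref{boundary_control} and Lemma \ref{trace_lemma}), the high-order control of $\text{U}^{\mu}$ from Theorem \ref{high_order_estimate_U}, the div-curl estimates for the good unknowns from Proposition \ref{div_curl_good_unknowns_control}, and Proposition \ref{speed_surface_control} in order to dualise the worst top-order contribution $\frac{1}{\mu}\partial_{k}\underline{\text{U}}^{\mu}_{(\gamma)}\cdot N^{\mu}$. The main obstacle is the careful bookkeeping required so that every top-order commutator is written in terms of the quantities $|\mathfrak{P}\psi|_{H^{3}}$, $|\mathfrak{P}\psia|_{2}$, $\llver\omega\rrver_{H^{N-1}}$ and $|\frac{1}{\mathfrak{P}}(\omega_{b}\cdot N^{\mu}_{b})|_{2}$ that compose $\mathcal{E}^{N}$, with the right power; the Coriolis-induced terms require in addition the uniform $L^{2}$-boundedness of the operator $\mathfrak{P}\frac{\nabla}{\Delta}\cdot$, which is precisely what produces the factor $\frac{\epsilon}{\text{Ro}}\sqrt{\mathcal{E}^{N}}$ in \eqref{control_remainder}.
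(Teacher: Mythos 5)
Your overall plan is sound and correctly identifies the essential ingredients (Alinhac's good unknowns, the Rayleigh--Taylor coefficient, Theorem~\ref{high_order_estimate_U}, Proposition~\ref{div_curl_good_unknowns_control}, Proposition~\ref{speed_surface_control} and the trace lemmas), but for the dynamic equation you describe a genuinely different route from the one the paper actually takes. The paper's proof applies $\partial_k$ and then $\partial^\gamma$ to the \emph{scalar} $\psi$-equation, recognizes the scalar good unknown $\psia$ together with $\mathfrak{a}\partial^\alpha\zeta$, and only at the very end converts $\partial_t\psia$ into $\partial_t(\underline{\text{U}}^{\mu}_{(\gamma)\sslash}\cdot e_{\textbf{k}})$ by adapting Lemma~4.4 of Castro--Lannes (using Remark~\ref{transport_eq_vort_surf} and Proposition~\ref{time_derivative_control} to track the vorticity and time-derivative contributions). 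You propose instead to differentiate the vector form \eqref{eq_int2}, project on $e_{\textbf{k}}$, and arrive at $\underline{\text{U}}^{\mu}_{(\gamma)\sslash}\cdot e_{\textbf{k}}$ directly, thereby bypassing the $\psia$-to-$\underline{\text{U}}^{\mu}_{(\gamma)\sslash}$ conversion; this is legitimate because \eqref{eq_int2} is equivalent to the scalar equation plus the transport equation for $\underline{\omega}\cdot N^{\mu}$, but it moves the difficulty into the Alinhac commutators that relate $\partial^{\gamma}(\Umus)_k$ to $\underline{\text{U}}^{\mu}_{(\gamma)\sslash}\cdot e_{\textbf{k}}$, which you summarize rather quickly as ``plus commutators.''

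There is one small internal inconsistency you should fix: you write that you would absorb the piece $\epsilon\frac{\nabla}{\Delta}\cdot[\frac{1}{\text{Ro}}\underline{\text{V}}^{\perp}]$ into $R^{2}_{\alpha}$ ``using the smoothing of $\frac{\nabla}{\Delta}$'' and the $L^2$-boundedness of $\mathfrak{P}\frac{\nabla}{\Delta}\cdot$. But that smoothing operator only appears if one starts from the scalar $\psi$-equation; in the vector form \eqref{eq_int2} the Coriolis term is simply $\frac{\epsilon}{\text{Ro}}\underline{\text{V}}^{\perp}$ with no nonlocal operator in front. The estimate still closes, because you only apply $\partial^{\gamma}$ with $|\gamma|\le N-1$ rather than $\partial^{\alpha}$ with $|\alpha|=N$, so the derivative count matches what the scalar route obtains via $\partial^{\alpha}\frac{\nabla^{\perp}}{\Delta}\cdot\underline{\text{V}}$; the control of $|\mathfrak{P}\partial^{\gamma}\underline{\text{V}}|_2$ goes through the decomposition of $\partial^{\gamma}\underline{\text{V}}$ in terms of $\mathfrak{P}\psia$, $\underline{\text{w}}$, $\psit$ and a symmetric commutator, together with Lemmas \ref{P_product}, \ref{boundary_control} and \ref{psit_control}. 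In short: drop the $\frac{\nabla}{\Delta}$ smoothing argument if you keep the vector starting point, and replace it by the correct derivative bookkeeping on $\partial^{\gamma}\underline{\text{V}}$.
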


\medskip

\noindent Before proving this result, we introduce the following notation. For $\alpha \in \mathbb{N}^{d}$ and $f,g \in H^{|\alpha|-1}(\RD)$, we define the symmetric commutator

\begin{equation*}
\left[ \partial^{\alpha }, f, g \right] = \partial^{\alpha} \left( fg \right) - g \partial^{\alpha} f - f \partial^{\alpha} g. 
\end{equation*}

\begin{proof}
\noindent Firstly, we apply $\partial^{\alpha}$ to the first equation of \eqref{Castro_lannes_formulation_straight}

\begin{equation*}
\partial_{t} \partial^{\alpha} \zeta + \epsilon \underline{\text{V}} \cdot \nabla \partial^{\alpha} \zeta + \epsilon \partial^{\alpha} \underline{\text{V}} \cdot \nabla \zeta - \frac{1}{\mu} \partial^{\alpha} \underline{\text{w}} + \epsilon \left[ \partial^{\alpha}, \underline{\text{V}}, \nabla \zeta \right] = 0.
\end{equation*}

\noindent Using Theorem \ref{high_order_estimate_U} and the trace Lemma \ref{trace_lemma}, we get the first equality. For the second equality we get, after applying $\partial_{k}$ to the second equation of \eqref{Castro_lannes_formulation},

\begin{align*}
\partial_{t} \partial_{k} \psi \hspace{-0.05cm} + \hspace{-0.05cm} \partial_{k} \zeta \hspace{-0.05cm} + \hspace{-0.05cm} \epsilon \underline{\text{V}} \cdot \hspace{-0.05cm} \left( \hspace{-0.1cm} \left( \partial_{k} \nabla \psi \hspace{-0.1cm} - \hspace{-0.1cm} \epsilon \underline{\text{w}} \nabla \partial_{k} \zeta \right) \hspace{-0.1cm} + \hspace{-0.1cm} \partial_{k} \nabla^{\perp} \psit \right) \hspace{-0.1cm} &- \frac{\epsilon}{\mu} \underline{\text{w}} \partial_{k} \left( \underline{\text{U}}^{\mu} \hspace{-0.1cm} \cdot \hspace{-0.1cm} N^{\mu} \right)\\
&- \epsilon \partial_{k} \frac{\nabla^{\perp}}{\Delta} \cdot \left(\hspace{-0.1cm} \left( \underline{\omega} \hspace{-0.05cm} \cdot \hspace{-0.05cm} N^{\mu} + \frac{1}{\text{Ro}} \right) \underline{\text{V}} \right) = - \partial_{k} P.
\end{align*}

\noindent Then, applying $\partial^{\gamma}$ and using Lemma 4.3 in \cite{Castro_Lannes_vorticity} (we can easily adapt it thanks to Theorem \ref{high_order_estimate_U} and Lemma \ref{psit_control}) we get 

\begin{align*}
\partial_{t} \partial^{\alpha} \psi + \partial^{\alpha} \zeta &+ \epsilon \underline{\text{V}} \cdot \left( \hspace{-0.05cm} \left( \partial^{\alpha} \nabla \psi \hspace{-0.1cm} -\hspace{-0.1cm} \epsilon \underline{\text{w}} \nabla \partial^{\alpha} \zeta \right) \hspace{-0.1cm} + \hspace{-0.1cm} \partial^{\alpha} \nabla^{\perp} \psit \right)\\
&- \frac{\epsilon}{\mu} \underline{\text{w}} \partial^{\alpha} \left( \underline{\text{U}}^{\mu} \hspace{-0.1cm} \cdot \hspace{-0.1cm} N^{\mu} \right) - \epsilon \partial^{\alpha} \frac{\nabla^{\perp}}{\Delta} \cdot \left(\hspace{-0.1cm} \left( \underline{\omega} \hspace{-0.05cm} \cdot \hspace{-0.05cm} N^{\mu} + \frac{1}{\text{Ro}} \right) \underline{\text{V}} \right) = - \partial^{\alpha} P + \widetilde{R^{2}_{\alpha}},
\end{align*}

\noindent where $\widetilde{R^{2}_{\alpha}}$ is controlled

\begin{equation}\label{control_remainder2}
\left\lvert \widetilde{R^{2}_{\alpha}} \right\rvert_{2} + \left\lvert \mathfrak{P} \widetilde{R^{2}_{\alpha}} \right\rvert_{2} \leq \epsilon M_{N} \mathcal{E}^{N} \hspace{-0.1cm} \left( \zeta, \psi, \omega \right).
\end{equation}

\noindent Using the first equation of \eqref{Castro_lannes_formulation} and the fact that $\Delta \psit = \underline{\omega} \cdot N^{\mu}$, we obtain

\begin{align*}
\partial_{t} \psia \hspace{-0.1cm} + \hspace{-0.1cm} \mathfrak{a} \partial^{\alpha} \zeta \hspace{-0.1cm} + \hspace{-0.1cm} \epsilon \underline{\text{V}} \cdot \nabla \psia \hspace{-0.1cm} + \hspace{-0.1cm} \frac{\epsilon}{\text{Ro}}  \partial^{\alpha} \frac{\nabla^{\perp}}{\Delta} \cdot \underline{\text{V}} \hspace{-0.1cm} + \hspace{-0.1cm} \partial^{\alpha} P &= \epsilon \partial^{\alpha} \frac{\nabla^{\perp}}{\Delta} \cdot \left( \underline{\omega} \cdot N^{\mu} \underline{\text{V}} \right)\\
&\hspace{2cm} -  \epsilon \underline{\text{V}} \cdot \nabla^{\perp} \partial^{\alpha} \psit +  \widetilde{R^{2}_{\alpha}}\\
&= \epsilon \hspace{-0.15cm} \underset{k \in \{1,2 \}}{\displaystyle{\sum}} \hspace{-0.15cm} (-1)^{k+1} \hspace{-0.13cm} \left[\partial^{\alpha} \frac{\partial_{k}}{\Delta}, \underline{\text{V}}_{\, 3-k} \right] \left( \underline{\omega} \cdot N^{\mu} \right)\\
&\hspace{0.5cm} + \widetilde{R^{2}_{\alpha}}\\
&:=\widetilde{R^{3}_{\alpha}} + \widetilde{R^{2}_{\alpha}},\\
\end{align*}

\noindent where $\partial_{1} = \partial_{x}$ and $\partial_{2} = \partial_{y}$. Then, using Theorem 3 in \cite{Lannes_sharp_estimates}, Lemma \ref{P_product} and Lemma \ref{boundary_control} we get

\begin{equation*}
\left\lvert \widetilde{R^{3}_{\alpha}} \right\rvert_{2} + \left\lvert \mathfrak{P} \widetilde{R^{3}_{\alpha}} \right\rvert_{2} \leq \epsilon M_{N} \llver \text{V} \rrver_{H^{N,1}} \llver \omega \rrver_{H^{N-1,1}} + \epsilon \left\lvert \mathfrak{P} \frac{\nabla^{\perp}}{\Delta} \cdot \left( \underline{\omega} \cdot N^{\mu} \partial^{\alpha} \underline{\text{V}} \right) \right\rvert_{2}.
\end{equation*}

\noindent Furthermore,

\begin{align*}
\left\lvert \mathfrak{P} \frac{\nabla^{\perp}}{\Delta} \cdot \left( \underline{\omega} \cdot N^{\mu} \partial^{\alpha} \underline{\text{V}} \right) \right\rvert_{2} \hspace{-0.1cm} &\hspace{-0.1cm}\leq \left\lvert \frac{1}{\sqrt{1+\sqrt{\mu}|D|}} \left( \underline{\omega} \cdot N^{\mu} \partial^{\alpha} \underline{\text{V}} \right) \right\rvert_{2},\\
&\hspace{-0.1cm} \leq \left\lvert \frac{1}{\sqrt{1+\sqrt{\mu}|D|}} \left( \partial_{k} \left(\underline{\omega} \cdot N^{\mu} \right) \partial^{\gamma} \underline{\text{V}} \right) \right\rvert_{2} \hspace{-0.2cm} + \hspace{-0.05cm} \left\lvert \mathfrak{P} \left( \underline{\omega} \cdot N^{\mu} \partial^{\gamma} \underline{\text{V}} \right) \right\rvert_{2} ,\\
&\leq C \left(\epsilon \left\lvert \zeta \right\rvert_{H^{N}} \right) \left\lvert \underline{\omega} \right\rvert_{H^{N-2}} \left( \left\lvert \underline{\text{V}} \right\rvert_{H^{N-1}} + \left\lvert \mathfrak{P}  \partial^{\gamma} \underline{\text{V}} \right\rvert_{2} \right),
\end{align*}

\noindent where we use Lemma \ref{P_commutator}. The first term is controlled thanks to the trace Lemma \ref{trace_lemma} and Theorem \ref{high_order_estimate_U}. For the second term, we have

\begin{equation*}
\partial^{\gamma} \underline{\text{V}} = \nabla \partial^{\gamma} \psi - \epsilon \underline{\text{w}} \nabla \partial^{\gamma} \zeta - \epsilon \partial^{\gamma} \underline{\text{w}} \nabla \zeta + \nabla^{\perp} \partial^{\gamma} \psit - \epsilon \left[\partial^{\gamma},  \underline{\text{w}} , \nabla \zeta \right],
\end{equation*}

\noindent and the control follows from  Lemma \ref{P_product}, Lemma \ref{boundary_control}, Theorem \ref{high_order_estimate_U} and Lemma \ref{psit_control}. Then, we obtain

\begin{equation*}
\partial_{t} \psia + \mathfrak{a} \partial^{\alpha} \zeta + \epsilon \underline{\text{V}} \cdot \nabla \psia +  \frac{\epsilon}{\text{Ro}}  \partial^{\alpha} \frac{\nabla^{\perp}}{\Delta} \cdot \underline{\text{V}} + \partial^{\alpha} P = \widetilde{\widetilde{R^{2}_{\alpha}}},
\end{equation*}

\noindent where $\widetilde{\widetilde{R^{2}_{\alpha}}}$ satisfied also the estimate \eqref{control_remainder2}. Finally, we can adapt Lemma 4.4 in \cite{Castro_Lannes_vorticity} thanks to Remark \ref{transport_eq_vort_surf}, Theorem \ref{high_order_estimate_U} and Proposition  \ref{time_derivative_control} and we get

\begin{equation*}
\partial_{t} \psia = \partial_{t} \left( \text{U}^{\mu}_{(\gamma) \sslash} \cdot \textbf{e}_{k} \right) + \widetilde{R_{\alpha}},
\end{equation*}

\noindent where $\widetilde{R_{\alpha}}$ satisfies the same estimate as $R_{2}$ in \eqref{control_remainder}. The third equality is a direct consequence of Proposition \ref{vorticity_estimate}.
\end{proof}

\noindent In order to establish an a priori estimate we need to control the Rayleigh-Taylor coefficient $\mathfrak{a}$. The following Proposition is adapted from Proposition 2.10 in \cite{my_proud_res}.

\begin{prop}\label{controls_rt}
Let $T > 0$, $t_{0}> \frac{d}{2}$, $N \geq 5$, $(\zeta,\psi,\omega) \in E_{T}^{N}$ is a solution of the water waves equations \eqref{Castro_lannes_formulation_straight}, $P \in L^{\infty}(\R^{+}; \Hdot^{N+1}(\RD))$ and $b \in L^{\infty} \cap \Hdot^{N+1}(\RD)$, such that Condition \eqref{nonvanishing} is satisfied. We assume also that $\epsilon, \beta, \text{Ro}, \mu$ satisfy \eqref{constraints_parameters}. Then, for all $0 \leq t \leq T$,

\begin{equation*}
\lver \mathfrak{a} -1 \rver_{W^{1,\infty}} \leq C\left( M_{N}, \epsilon \sqrt{\mathcal{E}^{N} \hspace{-0.1cm} \left( \zeta, \psi, \omega \right)} \right) \epsilon \sqrt{\mathcal{E}^{N} \hspace{-0.1cm} \left( \zeta, \psi, \omega \right)} + \epsilon M_{N} \left\lvert \nabla P \right\rvert_{L^{\infty}_{t} H_{\! X}^{N}}.
\end{equation*}

\noindent Furthermore, if $\partial_{t} P \in L^{\infty}(\R^{+}; \Hdot^{N}(\RD))$, then,

\begin{equation*}
\lver \partial_{t} \mathfrak{a} \rver_{L^{\infty}} \leq C \hspace{-0.1cm} \left( \hspace{-0.1cm} M_{N}, \left\lvert \nabla P \right\rvert_{L^{\infty}_{t} H_{\! X}^{N}}, \epsilon \sqrt{\mathcal{E}^{N} \hspace{-0.1cm} \left( \zeta, \psi, \omega \right)} \right) \epsilon \sqrt{\mathcal{E}^{N} \hspace{-0.1cm} \left( \zeta, \psi, \omega \right)} + \epsilon M_{N} \left\lvert \nabla P \right\rvert_{W^{1,\infty}_{t} H_{\! X}^{N}}.
\end{equation*}
\end{prop}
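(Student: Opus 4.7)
The plan is to first derive an explicit formula for $\mathfrak{a}$ from the Euler equations, then estimate it via an elliptic problem for the internal pressure $\widetilde{\mathcal{P}}$. Taking the vertical component of \eqref{eq_int1} at the free surface and noting that the horizontal pressure gradient and the Coriolis term have no vertical component, while the third component of $N^\mu$ equals $1$, one obtains
\[
\epsilon(\partial_t + \epsilon\underline{\text{V}}\cdot\nabla)\underline{\text{w}} = -1 - (\partial_z\widetilde{\mathcal{P}})|_{z=\epsilon\zeta},
\]
so $\mathfrak{a} = -(\partial_z\widetilde{\mathcal{P}})|_{z=\epsilon\zeta}$ and the problem reduces to showing that the internal pressure is close to hydrostatic, namely $(\partial_z\widetilde{\mathcal{P}})|_{z=\epsilon\zeta} = -1 + O(\epsilon)$ in $W^{1,\infty}$.

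To exploit this, I would apply $\text{div}^\mu$ to \eqref{nondim_Euler_equations}, using $\text{div}^\mu\textbf{U}^\mu = 0$, to obtain a Poisson equation
\[
\Delta^\mu \widetilde{\mathcal{P}} = -\frac{\epsilon^2}{\mu}\text{div}^\mu\!\bigl[(\textbf{U}^\mu\cdot\nabla^\mu)\textbf{U}^\mu\bigr] - \frac{\epsilon^2\mu}{\text{Ro}}\nabla\cdot\textbf{V}^\perp - \epsilon\mu\Delta P
\]
in $\Omega_t$, with Dirichlet condition $\widetilde{\mathcal{P}}|_{z=\epsilon\zeta} = 0$ and a Neumann-type condition at $z = -1+\beta b$ obtained by dotting Euler at the bottom with $N_b^\mu$ (using that $\partial_t(\textbf{U}^\mu_b\cdot N_b^\mu) = 0$ since $b$ is time-independent). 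Decomposing $\widetilde{\mathcal{P}} = (\epsilon\zeta - z) + Q$, the hydrostatic part captures the leading $-1$ and leaves $Q$ solving a Poisson problem whose source and bottom data are both of size $\epsilon$. Elliptic regularity on the straightened strip $\mathcal{S}$, combined with the $H^{s,k}$-bounds on $\nabla^\mu\textbf{U}^\mu$ from Theorem \ref{high_order_estimate_U}, the trace estimates of Lemma \ref{boundary_control}, and Sobolev embedding (valid because $N \geq 5$), yields $|Q|_{W^{2,\infty}}$ of order $\epsilon$, controlled by $\mathcal{E}^N$ and $|\nabla P|_{H^N}$; this gives the first claimed bound on $\mathfrak{a}-1$.

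For the bound on $\partial_t\mathfrak{a}$, I differentiate the identity $\mathfrak{a} = -(\partial_z\widetilde{\mathcal{P}})|_{z=\epsilon\zeta}$ in time; this produces $\partial_t\partial_z\widetilde{\mathcal{P}}$ at the surface plus a $\partial_t\zeta$-correction coming from the moving evaluation point. The former is handled by the Poisson problem obtained by time-differentiating the previous one, with Proposition \ref{time_derivative_control} controlling $\partial_t\textbf{U}^\mu$ and its derivatives, which is precisely what forces the appearance of $|\nabla P|_{W^{1,\infty}_t H^N_X}$.

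The overall scheme follows Proposition 2.10 of \cite{my_proud_res}, but must be adapted on two fronts. The main obstacle is the non-flat bottom: it contributes genuinely new $\beta\nabla b$ and $\beta\nabla^2 b$ terms in the Neumann data at $z=-1+\beta b$ which must be absorbed into $M_N$, and the time-differentiated bottom condition still depends on $\textbf{U}^\mu$, requiring a careful use of the trace estimates. A secondary difficulty is the vorticity contribution: it enters quadratically in the Poisson source through $\text{div}^\mu[(\textbf{U}^\mu\cdot\nabla^\mu)\textbf{U}^\mu]$, so the full energy $\mathcal{E}^N$ (including its vorticity and bottom-trace pieces) is needed to close the estimate, while the Coriolis source $\epsilon^2\mu/\text{Ro}\cdot\nabla\cdot\textbf{V}^\perp$ remains uniformly controlled thanks to the constraint $\epsilon/\text{Ro} \leq 1$ from \eqref{constraints_parameters}.
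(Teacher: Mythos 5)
Your route is genuinely different from the paper's. You go through the internal pressure: you identify $\mathfrak{a}=-\bigl(\partial_{z}\widetilde{\mathcal{P}}\bigr)_{|z=\epsilon\zeta}$, derive the Poisson problem for $\widetilde{\mathcal{P}}$ by taking $\text{div}^{\mu}$ of the Euler system, subtract the hydrostatic solution, and invoke elliptic regularity on the flattened strip. The paper instead never touches the pressure: it uses Proposition~\ref{time_derivative_control} to replace $\partial_{t}\underline{\text{w}}$ by $\underline{\text{w}}[\epsilon\zeta,\beta b]$ applied to $(\partial_{t}\psi-\epsilon\underline{\text{w}}\partial_{t}\zeta+\epsilon\sqrt{\mu}\tfrac{\nabla}{\Delta}\cdot(\underline{\omega_{h}}^{\perp}\partial_{t}\zeta),\partial_{t}^{\sigma}\omega)$ plus a convective remainder (formula~\eqref{rt_other_expression}); then substituting the evolution equations of~\eqref{Castro_lannes_formulation_straight} gives an expression controlled directly through Theorems~\ref{control_U} and~\ref{high_order_estimate_U} and Remark~\ref{control_omega_bott}, and the $\epsilon M_{N}\lver\nabla P\rver$ term appears because $\partial_{t}\psi$ carries $-P$. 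The paper's route is shorter because it reuses the div-curl machinery already built; your route would require elliptic estimates for the mixed Dirichlet/Neumann Poisson problem in the variable-bottom strip, which are not established in this paper (though they are available in the irrotational setting in \cite{Lannes_ww}, Ch.~4). A further subtlety your proposal glosses over: the proposition is stated for a solution of the straightened Castro--Lannes system, where $\mathfrak{a}$ is \emph{defined} by \eqref{rayleigh_taylor_coefficient} and no pressure is part of the data; to use $\mathfrak{a}=-\bigl(\partial_{z}\widetilde{\mathcal{P}}\bigr)_{|z=\epsilon\zeta}$ you would first have to reconstruct $\widetilde{\mathcal{P}}$ from $(\zeta,\psi,\omega)$ and verify consistency, an extra step the intrinsic formula avoids. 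You also drop the Coriolis contribution $-\frac{\epsilon\mu\beta}{\text{Ro}}\textbf{V}^{\perp}_{b}\cdot\nabla b$ in the bottom Neumann data; it is harmless (again absorbed via $\epsilon/\text{Ro}\leq 1$), but it should be written.
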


\begin{proof}
\noindent Using Proposition \ref{time_derivative_control} we get that

\begin{equation}\label{rt_other_expression}
\begin{aligned}
\mathfrak{a}[\epsilon \zeta, \beta b](\psi, \omega) &= 1 + \epsilon^{2} \underline{\text{V}} \cdot \nabla \underline{\text{w}} + \epsilon \partial_{t} \zeta \underline{\partial_{z}^{\sigma} \text{w}}\\
&\hspace{0.1cm} + \epsilon \underline{\text{w}}[\epsilon \zeta, \beta b] \left(\partial_{t} \psi - \epsilon \underline{\text{w}}[\epsilon \zeta, \beta b](\psi,\omega) \partial_{t} \zeta + \epsilon \sqrt{\mu} \frac{\nabla}{\Delta} \cdot \left(\underline{\omega_{h}}^{\perp} \partial_{t} \zeta \right), \partial_{t}^{\sigma} \omega \right).\\
\end{aligned}
\end{equation}

\noindent Then, using the equations satisfied by $\left(\zeta,\psi,\omega\right)$, Theorems \ref{control_U} and \ref{high_order_estimate_U}, Remark \ref{control_omega_bott} and standard controls, we easily get the first inequality. The second inequality can be proved similarly.
\end{proof}

\noindent We can now establish an a priori estimate for the Castro-Lannes System with a Coriolis forcing under the positivity on the Rayleigh-Taylor coefficient

\begin{equation}\label{rayleigh_taylor_assumption}
  \exists \, \mathfrak{a}_{\min} > 0 \text{ ,  } \mathfrak{a} \geq \mathfrak{a}_{\min}.
\end{equation}

\begin{thm}\label{energy_estimate}
\noindent Let \upshape$N \geq 5$, $T > 0$, $b \in L^{\infty} \cap \Hdot^{N+2}(\RD)$, $P \in L^{\infty}_{t} \left( \R^{+}; \Hdot^{N+1}(\RD) \right)$ \itshape and \upshape $\left( \zeta, \psi, \omega \right) \in E^{N}_{T}$ \itshape solution of the water waves equations \eqref{Castro_lannes_formulation_straight} such that \upshape$\left(\zeta, b \right)$ \itshape satisfy Condition \eqref{nonvanishing} and \upshape $\mathfrak{a}[\epsilon \zeta, \beta b] \left( \psi, \omega \right)$ \itshape satisfies \eqref{rayleigh_taylor_assumption} on \upshape$\left[0,T \right]$\itshape. We assume also that $\epsilon, \beta, \text{Ro}, \mu$ satisfy \eqref{constraints_parameters}. Then, for all \upshape $t \in \left[0,T \right]$, \itshape

\upshape
\small
\begin{equation}
\begin{aligned}
\frac{d}{dt} \mathcal{E}^{N} \hspace{-0.1cm} \left( \zeta, \psi, \omega \right) &\leq \hspace{-0.05cm} C \hspace{-0.05cm} \left(\hspace{-0.05cm} \mu_{\max}, \hspace{-0.05cm} \frac{1}{h_{\min}}, \hspace{-0.05cm} \epsilon \sqrt{\mathcal{E}^{N} \hspace{-0.1cm} \left( \zeta, \psi, \omega \right)}, \hspace{-0.05cm} \beta \left\lvert \nabla b \right\rvert_{H^{N+1}}, \hspace{-0.05cm} \beta \left\lvert b \right\rvert_{L^{\infty}}, \lver \nabla P \rver_{W^{1,\infty}_{t} H^{N}_{\! X}} \right) \times  \\
&\hspace{1cm} \left( \epsilon \mathcal{E}^{N} \hspace{-0.1cm} \left( \zeta, \psi, \omega \right)^{\frac{3}{2}} + \max \left(\epsilon, \beta, \frac{\epsilon}{\text{Ro}} \right) \mathcal{E}^{N} \hspace{-0.1cm} \left( \zeta, \psi, \omega \right) + \lver \nabla P \rver_{L^{\infty}_{t} H^{N}_{\! X}} \sqrt{\mathcal{E}^{N} \hspace{-0.1cm} \left( \zeta, \psi, \omega \right)} \right).
\end{aligned}
\end{equation}
\normalsize
\itshape

\end{thm}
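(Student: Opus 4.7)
The plan is to differentiate each of the five pieces of $\mathcal{E}^{N}(\zeta,\psi,\omega)$ in time and combine the resulting estimates using the quasilinearized system from Proposition \ref{quasilinearization} together with the vorticity a priori estimate of Proposition \ref{vorticity_estimate}.

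First, I would handle the vorticity part $\llver \omega \rrver_{H^{N-1}}^{2} + |\frac{1}{\mathfrak{P}} (\omega_b \cdot N_b^{\mu})|_2^2$ directly by invoking Proposition \ref{vorticity_estimate}, which already produces the $\epsilon \mathcal{E}^{N,3/2}$ and $\max(\epsilon, \epsilon/\text{Ro}) \mathcal{E}^{N}$ terms. For the lower-order piece $|\mathfrak{P} \psi|_{H^3}^2$, use the $\psi$-equation in \eqref{Castro_lannes_formulation_straight} to compute $\partial_t(\mathfrak{P} \psi)$ and bound it in $H^3$ via Proposition \ref{frakP_psi_H3}, Proposition \ref{time_derivative_control} and standard product/commutator estimates; here the pressure contributes $|\nabla P|_{L^{\infty}_{t} H^{N}_{X}} \sqrt{\mathcal{E}^N}$ by Cauchy--Schwarz.

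The main work is the hyperbolic part $|\zeta|_{H^{N}}^{2} + \sum_{1 \leq |\alpha|\leq N} |\mathfrak{P} \psia|_2^2$. For each $\alpha$ with $1 \leq |\alpha| \leq N$, I would write $\partial^{\alpha} = \partial_k \partial^{\gamma}$ with $|\gamma| \leq N-1$ and form a symmetric energy of the form
\begin{equation*}
e_{\alpha}(t) = \tfrac{1}{2} \int_{\RD} \mathfrak{a} \, (\partial^{\alpha} \zeta)^2 + \tfrac{1}{2} |\mathfrak{P} \psia|_{2}^{2}.
\end{equation*}
Differentiating $e_\alpha$ and substituting the quasilinearized equations \eqref{quasilinearized_system}, the principal cross-terms take the schematic form $\int \mathfrak{a}\, \partial^{\alpha}\zeta \cdot \frac{1}{\mu} \partial_k \underline{\text{U}}^{\mu}_{(\gamma)} \cdot N^{\mu}$ versus $-\int \mathfrak{a}\, \partial^{\alpha}\zeta \cdot (\underline{\text{U}}^{\mu}_{(\gamma)\sslash} \cdot e_k)$; these should cancel to leading order thanks to the identification $\psia \sim \underline{\text{U}}^{\mu}_{(\gamma)\sslash}\cdot e_k$ provided by Proposition \ref{div_curl_good_unknowns_control}, combined with the pairing estimate of Proposition \ref{speed_surface_control} which controls such terms in the $(\mathfrak{P}, 1/\sqrt{1+\sqrt{\mu}|D|})$ duality. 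The $\partial_t \mathfrak{a}$ term produced by the weighted energy is estimated by Proposition \ref{controls_rt}, the transport terms $\epsilon \underline{\text{V}} \cdot \nabla$ are handled by integration by parts with $\epsilon\,\text{div}\,\underline{\text{V}}$ controlled via Theorem \ref{high_order_estimate_U}, and the remainders $R^1_{\alpha}, R^2_{\alpha}$ are absorbed using \eqref{control_remainder}, which supplies the $\epsilon \mathcal{E}^{N,3/2}$, $\frac{\epsilon}{\text{Ro}} \mathcal{E}^{N}$, and $\frac{\epsilon}{\text{Ro}}\sqrt{\mathcal{E}^{N}}$ contributions, while $\partial^{\alpha} P$ tested against $\mathfrak{P}\psia$ gives the pressure contribution.

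The main obstacle is the symmetrization argument in the hyperbolic step: the identification between $\psia$ and $\underline{\text{U}}^{\mu}_{(\gamma)\sslash}\cdot e_k$ must be used without losing derivatives, which is precisely why the $\mathfrak{P}$-weighted norm of $\psia$ appears in the energy and why the pairing with $\frac{1}{\mu}\partial_k \underline{\text{U}}^{\mu}_{(\gamma)} \cdot N^{\mu}$ has to be done against test functions in the $(\mathfrak{P}, 1/\sqrt{1+\sqrt{\mu}|D|})$ scale of Proposition \ref{speed_surface_control}. The additional terms coming from the Coriolis force appear as $\frac{\epsilon}{\text{Ro}} \partial^{\alpha} \frac{\nabla^\perp}{\Delta}\cdot \underline{\text{V}}$ in the quasilinearization and are harmless since the non-local operator $\frac{\nabla^\perp}{\Delta}$ is compensated by $\mathfrak{P}$, and the non-flat bottom enters solely through the constants $M_N$ defined in \eqref{M_N_def}, together with the adaptations to Propositions \ref{formulation_nabla}--\ref{Control_psit} carried out in Section 2.
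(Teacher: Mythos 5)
Your outline correctly identifies the main ingredients (the quasilinearization of Proposition \ref{quasilinearization}, the vorticity bound of Proposition \ref{vorticity_estimate}, the pairing Proposition \ref{speed_surface_control}, and Proposition \ref{controls_rt} for $\partial_t\mathfrak{a}$), and the treatment of the vorticity block and the low-order $|\mathfrak{P}\psi|_{H^3}$ block is in the spirit of the paper. However, there is a genuine gap in the treatment of the hyperbolic block. The energy $e_\alpha=\frac12\int\mathfrak{a}(\partial^\alpha\zeta)^2+\frac12|\mathfrak{P}\psia|_2^2$ does \emph{not} produce the required cancellation of the principal cross terms. Differentiating $\frac12|\mathfrak{P}\psia|_2^2$ gives $\int\mathfrak{P}^2\psia\cdot\partial_t\psia$, and substituting $\partial_t\psia\approx-\mathfrak{a}\partial^\alpha\zeta$ yields a cross term $-\int\mathfrak{P}^2\psia\cdot\mathfrak{a}\partial^\alpha\zeta$, whereas the $\zeta$-part produces $+\int\mathfrak{a}\partial^\alpha\zeta\cdot\frac1\mu\partial_k\underline{\text{U}}^\mu_{(\gamma)}\cdot N^\mu$. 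These cancel only if $\mathfrak{P}^2\psia$ equals $\frac1\mu\partial_k\underline{\text{U}}^\mu_{(\gamma)}\cdot N^\mu$, which is false: $\mathfrak{P}^2=\frac{|D|^2}{1+\sqrt\mu|D|}$ is a fixed Fourier multiplier while the latter is (essentially) the order-$\mu$--rescaled Dirichlet--Neumann operator for the moving, non-flat domain. They are only comparable, and the difference carries a derivative that is \emph{not} controlled by $\mathcal{E}^N$ at top order $|\alpha|=N$. Trying instead to estimate the surviving cross term with Proposition \ref{speed_surface_control} does not close either, because one would then need $|\mathfrak{P}(\mathfrak{a}\partial^\alpha\zeta)|_2$, which exceeds the $H^N$ regularity of $\zeta$.

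The paper sidesteps this by introducing the modified energy $\mathcal{F}^N$ of \eqref{modif_energy}, in which the $|\mathfrak{P}\psia|_2^2$ piece is replaced by the interior integral $\frac1{2\mu}\int_{\mathcal{S}}(1+\partial_z\sigma)|\partial_k\text{U}^\mu_{(\gamma)}|^2$, and proving equivalence $\mathcal{E}^N\sim\mathcal{F}^N$ via Propositions \ref{div_curl_good_unknowns_control} and \ref{frakP_psi_H3}. The crucial structural fact is that the boundary pairing $\big(\partial_t(\underline{\text{U}}^\mu_{(\gamma)\sslash}\cdot e_k),\,\frac1\mu\partial_k\underline{\text{U}}^\mu_{(\gamma)}\cdot N^\mu\big)$ arising after symmetrization is, by Green's identity on $\mathcal{S}$, exactly the time derivative of this interior integral, up to terms involving $\nabla^{\sigma,\mu}\cdot\text{U}^\mu_{(\alpha)}$ and $\nabla^{\sigma,\mu}\times\text{U}^\mu_{(\alpha)}-\mu\partial^\alpha\omega$ (controlled by Proposition \ref{div_curl_good_unknowns_control}), $\partial_t^\sigma$-terms (Proposition \ref{time_derivative_control}), the identity \eqref{time_integral}, and a bottom contribution where the non-flat $b$ enters, requiring $b\in\Hdot^{N+2}$. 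Without this conjugate energy the cancellation fails exactly at the highest order. You would need to either adopt the paper's $\mathcal{F}^N$ or prove a sharp identity linking $\mathfrak{P}^2\psia$ to $\frac1\mu\partial_k\underline{\text{U}}^\mu_{(\gamma)}\cdot N^\mu$ up to controlled remainders, which is substantially more delicate than invoking the quasi-equivalence $\psia\sim\underline{\text{U}}^\mu_{(\gamma)\sslash}\cdot e_k$.
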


\begin{proof}
\noindent Compared to \cite{Castro_Lannes_vorticity}, we have here a non flat bottom, a Coriolis forcing and a non constant pressure. We focus on these terms. Inspired by \cite{Castro_Lannes_vorticity} we can symmetrize the Castro-Lannes system. We define a modified energy

\begin{equation}\label{modif_energy}
\begin{aligned}
\mathcal{F}^{N} \left( \psi, \zeta, \omega \right) = \frac{1}{2} \Big( \llver \bm{\omega} \rrver_{H^{N-1}}^{2} &+  \left\lvert \frac{1}{\mathfrak{P}} \left( \omega_{b} \cdot N_{b}^{\mu} \right) \right\rvert_{2}^{2} +  \underset{|\alpha| \leq 3}{\sum} \left\lvert \partial^{\alpha} \zeta \right\rvert_{2}^{2} + \frac{1}{\mu} \int_{\mathcal{S}} \left(1+\partial_{z} \sigma \right) \left\lvert \partial^{\alpha} \text{U}^{\mu} \right\rvert^{2} \\
&\hspace{1cm} + \hspace{-1cm} \underset{k=x,y, 1 \leq |\gamma| \leq N-1}{\sum} \hspace{-1cm} \left( \mathfrak{a} \partial_{k} \partial^{\gamma} \zeta,  \partial_{k} \partial^{\gamma} \zeta \right) + \frac{1}{\mu} \int_{\mathcal{S}} \left(1+\partial_{z} \sigma \right) \left\lvert \partial_{k} \text{U}^{\mu}_{(\gamma)} \right\rvert^{2}  \Big).
\end{aligned}
\end{equation}

\noindent From Proposition \ref{div_curl_good_unknowns_control} and Proposition \ref{frakP_psi_H3} we get 

\begin{equation*}
\mathcal{E}^{N} \hspace{-0.1cm} \left( \psi, \zeta, \omega \right) \leq C \left( \frac{1}{\mathfrak{a}_{\min}}, M_{N} \right) \mathcal{F}^{N} \hspace{-0.1cm} \left( \psi, \zeta, \omega \right),
\end{equation*}

\noindent and from Theorem \ref{control_U}, Theorem \ref{high_order_estimate_U}, Remark \ref{control_omega_bott} and Proposition \ref{controls_rt} we obtain that

\small
\begin{equation*}
\mathcal{F}^{N} \hspace{-0.1cm} \left( \psi, \zeta, \omega \right) \leq C \left(\frac{1}{h_{\min}}, \beta \left\lvert b \right\rvert_{L^{\infty}}, \beta \left\lvert \nabla b \right\rvert_{H^{N}}, \left\lvert \nabla P \right\rvert_{L^{\infty}_{t} H^{N}_{\! X}}, \epsilon \sqrt{\mathcal{E}^{N} \hspace{-0.1cm} \left( \psi, \zeta, \omega \right) } \right) \mathcal{E}^{N} \hspace{-0.1cm} \left( \psi, \zeta, \omega \right).
\end{equation*}
\normalsize

\noindent Hence, in the following we estimate $\frac{d}{dt} \mathcal{F}^{N} \left( \psi, \zeta, \omega \right)$. We already did the work for the vorticity in Proposition \ref{vorticity_estimate}. In the following $R$ will be a remainder whose exact value has no importance and satisfying

\begin{equation}\label{control_remainder3}
\lver R \rver_{2} \leq  C \hspace{-0.1cm} \left(\frac{1}{h_{\min}}, \beta \left\lvert b \right\rvert_{L^{\infty}}, \beta \left\lvert \nabla b \right\rvert_{H^{N+1}}, \left\lvert \nabla P \right\rvert_{W^{1,\infty}_{t} H^{N}_{\! X}}, \epsilon \sqrt{\mathcal{E}^{N} \hspace{-0.1cm} \left( \psi, \zeta, \omega \right) } \right) \mathcal{E}^{N} \hspace{-0.1cm} \left( \psi, \zeta, \omega \right) \hspace{-0.1cm}.
\end{equation}

\noindent We start by the low order terms. Let $\alpha \in \mathbb{N}^{d}$, $|\alpha| \leq 3$. We apply $\partial^{\alpha}$ to the first equation of System \eqref{Castro_lannes_formulation_straight} and we multiply it by $\zeta$. Then, we apply  $\partial^{\alpha}$ to the second equation and we multiply it by $\frac{1}{\mu} \underline{\text{U}}^{\mu} \cdot N^{\mu}$. By summing these two equations, we obtain, thanks to Theorem \ref{control_U}, Theorem \ref{high_order_estimate_U}, Remark \ref{control_omega_bott} and the trace Lemma,

\begin{equation}\label{inequality1}
\begin{aligned}
\frac{1}{2} \partial_{t} \left(\partial^{\alpha} \zeta, \partial^{\alpha} \zeta \right) + \left( \partial_{t} \partial^{\alpha} \psi, \frac{1}{\mu} \partial^{\alpha} \underline{\text{U}}^{\mu} \cdot N^{\mu} \right) \hspace{-0.1cm} &+ \hspace{-0.05cm} \frac{\epsilon}{\text{Ro}} \left( \frac{\nabla}{\Delta} \cdot \partial^{\alpha} \underline{\text{V}}^{\perp} , \frac{1}{\mu} \partial^{\alpha} \underline{\text{U}}^{\mu} \cdot N^{\mu} \right)\\
&\hspace{0.5cm} + \left(\partial^{\alpha} P, \frac{1}{\mu} \partial^{\alpha} \underline{\text{U}}^{\mu} \cdot N^{\mu} \right) \leq \epsilon \lver R \rver_{2}.
\end{aligned}
\end{equation}

\noindent Furthermore, using again the same Propositions as before, we get

\small
\begin{equation*}
\frac{\epsilon}{\text{Ro}} \hspace{-0.05cm} \left(\hspace{-0.05cm} \frac{\nabla}{\Delta} \hspace{-0.05cm} \cdot \hspace{-0.05cm} \partial^{\alpha} \underline{\text{V}}^{\perp} \hspace{-0.05cm}, \frac{1}{\mu} \hspace{-0.05cm} \partial^{\alpha}  \underline{\text{U}}^{\mu} \hspace{-0.05cm} \cdot \hspace{-0.05cm} N^{\mu} \hspace{-0.05cm} \right) \hspace{-0.08cm}+\hspace{-0.08cm} \left(\hspace{-0.05cm} \partial^{\alpha} \hspace{-0.05cm} P, \frac{1}{\mu} \hspace{-0.05cm} \partial^{\alpha} \underline{\text{U}}^{\mu} \hspace{-0.05cm} \cdot \hspace{-0.05cm} N^{\mu} \right) \hspace{-0.05cm} \leq \frac{\epsilon}{\text{Ro}} \lver R \rver_{2} + M_{N} \hspace{-0.05cm} \left\lvert \nabla P \right\rvert_{L^{\infty}_{t} H^{N}_{\! X}} \hspace{-0.1cm} \sqrt{\hspace{-0.05cm} \mathcal{E}^{N} \hspace{-0.05cm} \left( \psi, \zeta, \omega \right)}.
\end{equation*}
\normalsize

\noindent Then, we have to link $\left( \partial_{t} \partial^{\alpha} \psi, \partial^{\alpha}  \underline{\text{U}}^{\mu} \cdot N^{\mu} \right)$ to $\partial_{t} \int_{\mathcal{S}} \left(1+\partial_{z} \sigma \right) \left\lvert \partial^{\alpha} \text{U}^{\mu} \right\rvert^{2}$. Remarking that $\psi =\underline{\phi}$, where $\phi$ satisfies

\begin{equation}\label{lap_problem}
\left\{
\begin{aligned}
&\nabla^{\mu}_{\! X,z} \cdot P(\Sigma) \nabla^{\mu}_{\! X,z} \phi = 0 \text{ in } \mathcal{S},\\
&\phi_{|z=0} = \psi \text{, } \text{e}_{z} \hspace{-0.05cm} \cdot P(\Sigma) \hspace{-0.05cm} \nabla^{\mu}  \phi_{|z=-1} = 0,
\end{aligned}
\right.
\end{equation}

\noindent we get thanks to Green's identity

\begin{equation*}
\begin{aligned}
\left(\partial_{t} \partial^{\alpha} \psi, \frac{1}{\mu} \partial^{\alpha} \underline{\text{U}}^{\mu} \cdot N^{\mu} \right) &= \frac{1}{\mu} \int_{\mathcal{S}} \hspace{-0.05cm} \left(1+\partial_{z} \sigma \right) \nabla^{\sigma, \mu}_{\! X,z} \left( \partial_{t}   \partial^{\alpha} \phi \right) \cdot \partial^{\alpha} \text{U}^{\mu}\\
& + \frac{1}{\mu} \int_{\mathcal{S}} \hspace{-0.05cm} \left(1+\partial_{z} \sigma \right) \partial^{\alpha} \partial_{t} \phi  \nabla^{\sigma, \mu}_{\! X,z} \cdot \partial^{\alpha} \text{U}^{\mu} + \left(\partial_{t} \partial^{\alpha} \phi_{b}, \frac{1}{\mu} \partial^{\alpha} \text{U}^{\mu}_{b} \cdot N^{\mu}_{b} \right) \hspace{-0.1cm}.
\end{aligned}
\end{equation*}

\noindent Then, notice that $\partial_{k} = \partial_{k}^{\sigma} + \partial_{k} \sigma \partial_{z}^{\sigma}$ for  $k \in \left\{t, x,y \right\}$ and $ \partial_{k}^{\sigma}$ and $\nabla^{\sigma, \mu}_{\! X,z}$ commute. We differentiate Equation \eqref{lap_problem} with respect to $t$ and we obtain thanks to Theorems \ref{control_U}, \ref{high_order_estimate_U}, Proposition \ref{time_derivative_control} and Lemma 2.38 in \cite{Lannes_ww} (irrotational theory),

\begin{equation*}
\begin{aligned}
\left(\hspace{-0.1cm} \partial_{t} \partial^{\alpha} \psi, \frac{1}{\mu} \partial^{\alpha} \underline{\text{U}}^{\mu} \cdot N^{\mu} \hspace{-0.1cm} \right) &= \frac{1}{\mu} \int_{\mathcal{S}} \hspace{-0.1cm} \left(1+\partial_{z} \sigma \right) \partial_{t}^{\sigma} \partial^{\sigma, \alpha} \nabla^{\sigma, \mu}_{\! X,z} \phi  \cdot \partial^{\alpha} \text{U}^{\mu}\\
 &\hspace{1cm} + \left(\partial_{t} \partial^{\alpha} \phi_{b}, \frac{1}{\mu} \partial^{\alpha} \text{U}^{\mu}_{b} \cdot N^{\mu}_{b} \right) + \max(\epsilon,\beta) R.
 \end{aligned}
\end{equation*}

\noindent Using the fact that $\text{w}_{b} = \mu \beta \nabla b \cdot \text{V}_{b}$, we get

\begin{equation*}
\left(\hspace{-0.1cm} \partial_{t} \partial^{\alpha} \phi_{b}, \frac{1}{\mu} \partial^{\alpha} \text{U}^{\mu}_{b} \cdot N^{\mu}_{b} \hspace{-0.1cm} \right) \leq \beta M_{N} \lver \partial_{t} \partial^{\alpha} \phi_{b} \rver \sqrt{\hspace{-0.05cm} \mathcal{E}^{N} \hspace{-0.05cm} \left( \psi, \zeta, \omega \right)}.
\end{equation*}

\noindent Then, by the trace Lemma, we finally obtain 
 
\begin{equation*}
\left(\hspace{-0.1cm} \partial_{t} \partial^{\alpha} \phi_{b}, \frac{1}{\mu} \partial^{\alpha} \text{U}^{\mu}_{b} \cdot N^{\mu}_{b} \hspace{-0.1cm} \right) \leq \beta \lver R \rver_{2}.
\end{equation*}

\noindent Furthermore, remarking that $\text{U}^{\mu} = \nabla^{\sigma, \mu}_{\! X,z} \phi + \text{U}^{\sigma, \mu} [\epsilon \zeta, \beta b] \left( 0, \omega \right)$, we obtain, thanks to Proposition \ref{time_derivative_control}, Theorem \ref{control_U} and Theorem \ref{high_order_estimate_U},

\begin{equation*}
\left(\partial_{t} \partial^{\alpha} \psi, \frac{1}{\mu} \partial^{\alpha} \underline{\text{U}}^{\mu} \cdot N^{\mu} \right) = \frac{1}{\mu} \int_{\mathcal{S}} \left(1+\partial_{z} \sigma \right) \partial_{t} \partial^{\alpha} \text{U}^{\mu} \cdot \partial^{\alpha} \text{U}^{\mu} + \max \left( \epsilon, \beta, \frac{\epsilon}{\text{Ro}} \right) R.
\end{equation*}

\noindent Using the following identity

\begin{equation}\label{time_integral}
\partial_{t} \int_{\mathcal{S}} (1+\partial_{z} \sigma)fg = \int_{\mathcal{S}} (1+\partial_{z} \sigma) \partial_{t}^{\sigma}f g + \int_{\mathcal{S}} (1+\partial_{z} \sigma) f \partial_{t}^{\sigma} g  + \int_{\RD} \epsilon \partial_{t} \zeta \underline{f} \underline{g},
\end{equation}

\noindent we obtain that

\begin{equation*}
\frac{1}{\mu} \partial_{t} \int_{\mathcal{S}}  \hspace{-0.15cm} \left(1+\partial_{z} \sigma \right)  \hspace{-0.05cm} \left\lvert \partial^{\alpha} \text{U}^{\mu} \right\rvert^{2} \hspace{-0.05cm} \leq \max \left( \epsilon, \beta, \frac{\epsilon}{\text{Ro}} \right) \lvert R \rvert_{2} + M_{N} \hspace{-0.1cm} \left\lvert \nabla P \right\rvert_{L^{\infty}_{t} H^{N}_{\! X}}  \hspace{-0.1cm} \sqrt{\mathcal{E}^{N} \left( \psi, \zeta, \omega \right)}.
\end{equation*}

\noindent To control the high order terms of $\mathcal{F}^{N} \left( \psi, \zeta, \omega \right)$ we adapt Step 2 in Proposition 4.5 in \cite{Lannes_ww}. Thanks to Proposition \ref{quasilinearization}, we have

\begin{equation*}
\begin{aligned}
&\left( \partial_{t} + \epsilon \underline{\text{V}} \cdot \nabla \right) \partial^{\alpha} \zeta - \frac{1}{\mu} \partial_{k} \underline{\text{U}}^{\mu}_{(\gamma)} \cdot N^{\mu} = R^{1}_{\alpha},\\
&\left( \partial_{t} + \epsilon \underline{\text{V}} \cdot \nabla \right) \left(\underline{\text{U}}^{\mu}_{(\gamma) \sslash} \cdot e_{\textbf{k}} \right) + \mathfrak{a} \partial^{\alpha} \zeta = - \partial^{\alpha} P + R^{2}_{\alpha}.\\
\end{aligned}
\end{equation*}

\noindent Then, we multiply the first equation by $\mathfrak{a} \partial^{\alpha} \zeta$ and the second by $\frac{1}{\mu} \partial_{k} \underline{\text{U}}^{\mu}_{(\gamma)} \cdot N^{\mu}$ and we integrate over $\RD$. Then, using Propositions  \ref{control_U}, \ref{speed_surface_control} and \ref{controls_rt},

\small
\begin{equation*}
\begin{aligned}
\frac{1}{2} \partial_{t} \left(\mathfrak{a} \partial^{\alpha} \zeta, \partial \zeta \right) + \left(\left( \partial_{t} + \epsilon \underline{\text{V}} \cdot \nabla \right) \left(\underline{\text{U}}^{\mu}_{(\gamma) \sslash} \cdot e_{\textbf{k}} \right), \frac{1}{\mu} \partial_{k} \underline{\text{U}}^{\mu}_{(\gamma)} \cdot N^{\mu} \right) &\leq \epsilon \mathcal{E}^{N} \hspace{-0.1cm} \left( \psi, \zeta, \omega \right)^{\frac{3}{2}}\\
&\hspace{-3cm} + \max \left(\epsilon, \frac{\epsilon}{\text{Ro}} \right) \lver R \rver_{2} + M_{N} \hspace{-0.05cm} \left\lvert \nabla P \right\rvert_{L^{\infty}_{t} H^{N}_{\! X}} \hspace{-0.1cm} \sqrt{\hspace{-0.05cm} \mathcal{E}^{N} \hspace{-0.1cm} \left( \psi, \zeta, \omega \right)}.
\end{aligned}
\end{equation*}
\normalsize

\noindent Then, we remark that 

\begin{equation*}
\left( \partial_{t} + \epsilon \underline{\text{V}} \cdot \nabla \right) \left(\underline{\text{U}}^{\mu}_{(\gamma) \sslash} \cdot e_{\textbf{k}} \right) = \underline{\left( \partial_{t}^{\sigma} + \frac{\epsilon}{\mu} \text{U}^{\mu} \cdot \nabla^{\sigma,\mu} \right) \left(\text{U}^{\mathfrak{b},\mu}_{(\gamma) \sslash} \cdot e_{\textbf{k}} \right)},
\end{equation*}

\noindent where $\text{U}^{\mathfrak{b},\mu}_{(\gamma) \sslash} = \text{V}_{(\gamma)} + \text{w}_{(\gamma)} \nabla \sigma$. Then, we have

\begin{equation*}
\begin{aligned}
\left(\left( \partial_{t} + \epsilon \underline{\text{V}} \cdot \nabla \right) \left(\underline{\text{U}}^{\mu}_{(\gamma) \sslash} \cdot e_{\textbf{k}} \right), \frac{1}{\mu} \partial_{k} \underline{\text{U}}^{\mu}_{(\gamma)} \cdot N^{\mu} \right)&\\
&\hspace{-4cm} = \frac{1}{\mu} \int_{\mathcal{S}} (1+\partial_{z} \sigma) \left( \partial_{t}^{\sigma} + \frac{\epsilon}{\mu} \text{U}^{\mu} \cdot \nabla^{\sigma,\mu} \right) \left(\text{U}^{\mathfrak{b},\mu}_{(\gamma) \sslash} \cdot e_{\textbf{k}} \right) \nabla^{\sigma,\mu}_{\! X,z} \cdot \left( \partial_{k} \text{U}^{\mu}_{(\gamma)} \right)\\
&\hspace{-4cm} + \frac{1}{\mu} \int_{\mathcal{S}} (1+\partial_{z} \sigma) \nabla^{\sigma,\mu}_{\! X,z} \left( \partial_{t}^{\sigma} + \frac{\epsilon}{\mu} \text{U}^{\mu} \cdot \nabla^{\sigma,\mu} \right) \left(\text{U}^{\mathfrak{b},\mu}_{(\gamma) \sslash} \cdot e_{\textbf{k}} \right) \left( \partial_{k} \text{U}^{\mu}_{(\gamma)} \right)\\
&\hspace{-4cm} + \left( \left( \partial_{t} + \epsilon \text{V}_{b} \cdot \nabla \right) \left(\text{U}^{\mathfrak{b},\mu}_{(\gamma) \sslash} \cdot e_{\textbf{k}} \right)_{b}, \frac{1}{\mu} \partial_{k} \left(\text{U}^{\mu}_{(\gamma)}\right)_{b} \cdot N^{\mu}_{b} \right).
\end{aligned}
\end{equation*}

\noindent We focus on the last term (bottom contribution). The two other terms can be controlled as in Step 2 in Proposition 4.5 in \cite{Castro_Lannes_vorticity}. Using the same computations as in Proposition \ref{speed_surface_control}, we have

\begin{equation*}
\frac{1}{\mu} \partial_{k} \left(\text{U}^{\mu}_{(\gamma)}\right)_{b} \cdot N^{\mu}_{b} = - \mu \beta \nabla \partial^{\alpha} b \cdot \text{V}_{b} + \text{l.o.t},
\end{equation*}

\noindent where l.o.t stands for lower order terms that can be controlled by the energy. Then, since $b \in \Hdot^{N+2}(\RD)$, we have by standard controls,

\begin{equation*}
\lver \frac{1}{\mu} \partial_{k} \left(\text{U}^{\mu}_{(\gamma)}\right)_{b} \cdot N^{\mu}_{b} \rver_{H^{\frac{1}{2}}} \leq \beta \lver \nabla b \rver_{H^{N+1}} \sqrt{\hspace{-0.05cm} \mathcal{E}^{N} \hspace{-0.05cm} \left( \psi, \zeta, \omega \right)}.
\end{equation*}

\noindent Furthermore, using Propositions \ref{control_U}, \ref{high_order_estimate_U} and \ref{time_derivative_control} and standard controls, we have

\begin{equation*}
\lver \left( \partial_{t} + \epsilon \text{V}_{b} \cdot \nabla \right) \left(\text{U}^{\mathfrak{b},\mu}_{(\gamma) \sslash} \cdot e_{\textbf{k}} \right)_{b} \rver_{H^{-\frac{1}{2}}} \leq \epsilon \lver R \rver_{2} + M_{N} \sqrt{\hspace{-0.05cm} \mathcal{E}^{N} \hspace{-0.05cm} \left( \psi, \zeta, \omega \right)},
\end{equation*}

\noindent and the control follows easily.
\end{proof}

\subsection{Existence result}\label{Existence_result}

\noindent We can now establish our existence theorem. Notice that thanks to Equation \eqref{rt_other_expression}, we can define the Rayleigh-Taylor coefficient at time $t=0$.

\begin{thm}\label{existence}
\noindent Let \upshape$A > 0$, $N \geq 5$, $b \in L^{\infty} \cap \Hdot^{N+2}	\! \left(\RD \right)$, $P \in W^{1,\infty}(\mathbb{R}^{+}; \Hdot^{N+1}(\RD))$, $\left( \zeta_{0}, \psi_{0}, \omega_{0} \right) \in \text{E}^{N}_{0}$\itshape such that \upshape$\nabla_{\! X,z}^{\sigma, \mu} \cdot \omega_{0} =0$\itshape. We suppose that \upshape$\left( \epsilon,\beta,\mu,\text{Ro} \right)$ \itshape satisfy \eqref{constraints_parameters}. We assume also that

\begin{equation*}
\exists \, h_{\min} \text{, } \mathfrak{a}_{\min} > 0 \text{ ,  } \epsilon \zeta _{0}+ 1 - \beta b \geq h_{\min} \text{ and } \mathfrak{a}[\epsilon \zeta, \beta b] \left(\psi, \omega \right)_{|t=0} \geq \mathfrak{a}_{\min}
\end{equation*}
\itshape

\noindent and 

\begin{equation*}
\mathcal{E}^{N} \hspace{-0.1cm} \left( \zeta_{0}, \psi_{0}, \omega_{0} \right) + \left\lvert \nabla P \right\rvert_{L^{\infty}_{t} H_{\! X}^{N}} \leq A.
\end{equation*}

 \itshape
 
\noindent Then, there exists \upshape$T > 0$, \itshape and a unique solution \upshape$\left( \zeta, \psi, \omega \right) \in \text{E}^{N}_{T}$ \itshape to the water waves equations \eqref{Castro_lannes_formulation_straight} with initial data \upshape$\left( \zeta_{0}, \psi_{0}, \omega_{0} \right)$\itshape. Moreover, 

\upshape
\begin{equation*}
T = \min \left( \frac{T_{0}}{\max(\epsilon, \beta, \frac{\epsilon}{\text{Ro}})}, \frac{T_{0}}{ \left\lvert \nabla P \right\rvert_{L^{\infty}_{t} H_{\! X}^{N}}} \right) \text{ , }  \frac{1}{T_{0}} =c^{1} \text{ and  } \underset{t \in [0,T]}{\sup} \mathcal{E}^{N} \left( \zeta(t), \psi(t), \omega(t) \right) = c^{2},
\end{equation*}
\itshape

\noindent with \upshape $c^{j} = C \left(A, \mu_{\max}, \frac{1}{h_{\min}}, \frac{1}{\mathfrak{a}_{\min}},\left\lvert b \right\rvert_{L^{\infty}}, \left\lvert \nabla b \right\rvert_{H^{N+1}}, \left\lvert \nabla P \right\rvert_{W^{1,\infty}_{t} H_{\! X}^{N}} \right)$.\itshape
\end{thm}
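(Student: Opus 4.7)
The plan is to construct the solution via an iterative scheme modelled on the quasilinearization of Proposition~\ref{quasilinearization} and to close it with the a priori estimate of Theorem~\ref{energy_estimate}, following closely the strategy of Section~4 in~\cite{Castro_Lannes_vorticity}. The non-flat bottom, the Coriolis term and the pressure forcing~$P$ have already been absorbed into the quasilinearization and the energy estimate; what remains is essentially to promote these a priori bounds into an existence statement.

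First I would regularize the initial data into a smooth sequence $\left(\zeta_{0}^{\delta}, \psi_{0}^{\delta}, \omega_{0}^{\delta}\right)$ (for instance by convolution in~$X$, adjusting $\omega_{0}^{\delta}$ so that $\nabla^{\sigma,\mu}\cdot \omega_{0}^{\delta}=0$ is preserved), and then define a sequence $\left(\zeta^{n}, \psi^{n}, \omega^{n}\right)_{n\geq 0}$ iteratively. At each step, $\left(\zeta^{n+1}, \psi^{n+1}\right)$ solves the linear system obtained by freezing, in \eqref{Castro_lannes_formulation_straight}, the velocity field $\text{U}^{\sigma,\mu}[\epsilon \zeta^{n}, \beta b](\psi^{n}, \omega^{n})$ (whose existence comes from Theorem~\ref{control_U}) and the Rayleigh-Taylor coefficient $\mathfrak{a}$ at iterate~$n$, while $\omega^{n+1}$ is obtained from the linear transport equation derived from \eqref{straight_vorticity_eq} by freezing $\text{U}^{\mu}$. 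The divergence-free constraint $\nabla^{\sigma,\mu}\cdot\omega^{n+1}=0$ and the transport equation of Remark~\ref{transport_eq_vort_surf} for $\omega_{b}\cdot N^{\mu}_{b}$ are preserved by the structure of the equation, which is essential so that Theorem~\ref{control_U} applies again at the next iterate.

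Second I would propagate uniform bounds by induction. If $\mathcal{E}^{N}\left(\zeta^{n}, \psi^{n}, \omega^{n}\right)\leq R$ and if $\epsilon\zeta^{n}+1-\beta b\geq h_{\min}/2$ and $\mathfrak{a}^{n}\geq \mathfrak{a}_{\min}/2$ on $[0,T]$, then rerunning the symmetrization argument of Theorem~\ref{energy_estimate} for the linearized system yields, thanks to Propositions~\ref{vorticity_estimate}, \ref{quasilinearization} and~\ref{controls_rt},
\begin{equation*}
\frac{d}{dt} \mathcal{E}^{N}\!\left(\zeta^{n+1}, \psi^{n+1}, \omega^{n+1}\right) \leq C(R)\!\left(\max\!\left(\epsilon, \beta, \tfrac{\epsilon}{\text{Ro}}\right) \mathcal{E}^{N}_{n+1} + \epsilon \left(\mathcal{E}^{N}_{n+1}\right)^{3/2} + \left\lvert \nabla P \right\rvert_{L^{\infty}_{t} H^{N}_{\! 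X}} \sqrt{\mathcal{E}^{N}_{n+1}}\right)\!,
\end{equation*}
so a Gronwall argument gives a time $T_{0}=T_{0}(A,\mu_{\max},h_{\min}^{-1},\mathfrak{a}_{\min}^{-1},\lvert b\rvert_{L^{\infty}},\lvert \nabla b\rvert_{H^{N+1}},\lvert \nabla P\rvert_{W^{1,\infty}_{t}H^{N}})$ on which the induction closes with $\mathcal{E}^{N}_{n+1}\leq 2R$ on the time interval displayed in the statement. Continuity in time of $\zeta^{n+1}$ and $\mathfrak{a}^{n+1}$ (the latter controlled through Proposition~\ref{controls_rt}) then allows me to preserve the non-cavitation and Rayleigh-Taylor conditions on the same time interval.

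Third I would pass to the limit: the iteration is a contraction in a lower norm, as one sees by writing the linearized system satisfied by $\left(\zeta^{n+1}-\zeta^{n}, \psi^{n+1}-\psi^{n}, \omega^{n+1}-\omega^{n}\right)$ and estimating it in $\mathcal{E}^{N-1}$, the uniform $\mathcal{E}^{N}$ bounds controlling all source terms coming from variations of the coefficients. Convergence in $E^{N-1}_{T}$ combined with weak-$\ast$ compactness and interpolation gives a limit $\left(\zeta,\psi,\omega\right)\in E^{N}_{T}$ solving \eqref{Castro_lannes_formulation_straight}, and uniqueness follows by applying the same $\mathcal{E}^{N-1}$ stability estimate to the difference of two hypothetical solutions. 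The hardest part will be the design of the linearized problem itself: one must re-derive, at the level of the iterates, the symmetric hyperbolic structure of Proposition~\ref{quasilinearization} based on the good unknowns $\psia$ and $\Umua$, while keeping the div-free constraint and the bottom boundary condition on $\omega^{n+1}$ propagated so that the div-curl theory of Section~2 still applies at each step; once this algebraic setup is in place, everything else is a routine Picard scheme fed by Theorem~\ref{energy_estimate}.
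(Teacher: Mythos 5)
Your Picard-iteration strategy is a legitimate alternative, but it is not the route the paper takes. The paper's argument (referred to Step~2 of the proof of Theorem~4.7 in Castro--Lannes) regularizes the \emph{system}, not the data: one inserts mollifiers $J_\delta$ directly into the equations \eqref{Castro_lannes_formulation_straight}, so that the mollified system becomes a genuine ODE in a Banach space and Cauchy--Lipschitz yields a solution on a $\delta$-dependent interval; the energy estimate of Theorem~\ref{energy_estimate} is then shown to hold \emph{uniformly} in $\delta$, which both lets one reach a $\delta$-independent time $T$ and allows passage to the limit $\delta\to 0$. Your scheme instead regularizes the initial data and freezes coefficients at the previous iterate; this shifts the difficulty onto the design of the linearized problem, which you rightly flag as the hardest step: when $\text{U}^{\sigma,\mu}[\epsilon\zeta^n,\beta b](\psi^n,\omega^n)$ is frozen, the consistency between the advecting velocity and the unknowns $(\zeta^{n+1},\psi^{n+1},\omega^{n+1})$ that underlies Proposition~\ref{quasilinearization} is broken, and the good-unknown structure and cancellations must be re-established for the two-level linearization before Theorem~\ref{energy_estimate} can be reused. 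There is also a minor pitfall in mollifying the data: the constraint $\nabla^{\sigma,\mu}\cdot\omega_0=0$ involves $\sigma$ and hence $\zeta_0$, so convolving $\zeta_0$ and $\omega_0$ independently does not preserve it; one has to project back.

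The more substantive place where you gloss over what the paper singles out is uniqueness and the contraction in $\mathcal{E}^{N-1}$. When you write the system for differences of iterates (or of two solutions), the dominant source terms come from the difference of the nonlocal velocity fields $\text{U}^{\sigma,\mu}[\epsilon\zeta^n,\beta b](\psi^n,\omega^n)-\text{U}^{\sigma,\mu}[\epsilon\zeta^{n-1},\beta b](\psi^{n-1},\omega^{n-1})$, and closing the estimate in a lower norm requires a quantitative Lipschitz dependence of the solution operator of the div-curl problem \eqref{div_curl_formulation_S} on $(\zeta,\psi,\omega)$. This is precisely the analogue of Corollary~3.19 in Castro--Lannes that the paper cites as the key ingredient for uniqueness, and it does not follow automatically from the estimates in Section~2 (which only give bounds for a \emph{fixed} surface and vorticity). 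Your proposal would be complete if you stated and used this Lipschitz property explicitly; as written, it is implicitly assumed inside the vague phrase about source terms ``coming from variations of the coefficients.''
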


\begin{proof}
\noindent We do not give the proof. It is very similar to Theorem 4.7 in \cite{Castro_Lannes_vorticity}. We can regularize the system \eqref{Castro_lannes_formulation_straight} (see Step 2 of the proof of Theorem 4.7 in \cite{Castro_Lannes_vorticity}) and thanks to the energy estimate of Theorem \ref{energy_estimate} we get the existence. The uniqueness mainly follows from a similar proposition to Corollary 3.19 in \cite{Castro_Lannes_vorticity} which shows that the operator $\text{U}^{\sigma, \mu}$ has a Lipschitz dependence on its coefficients.
\end{proof}

\section{The nonlinear shallow water equations}

\subsection{The context}

\noindent In this part we justify rigorously the derivation of the nonlinear rotating shallow water equations from the water waves equations. We recall that, in this paper, we do not consider fast Coriolis forcing, i.e $\text{Ro} \leq \epsilon$. The nonlinear shallow water equations (or Saint Venant equations) is a model used by the mathematical and physical communities to study the water waves in shallow waters. Coupled with a Coriolis term, we usually describe shallow waters under the influence of the Coriolis force thanks to it  (see for instance \cite{Bresch_shallow_water}, \cite{Majda_ocean} or \cite{vilibic_Proudman_resonance}). But to our knowledge, there is no mathematical justification of this fact. Without the Coriolis term, many authors mathematically justify the Saint Venant equations; for the irrotationnal case, there are, for instance the works of Iguchi \cite{Iguchi_shallow_water} and Alvarez-Samaniego and Lannes (\cite{Alvarez_Lannes}). It is also done in \cite{Lannes_ww}. More recently, Castro and Lannes proposed a way to justify the Saint-Venant equations without the irrotational condition(\cite{Castro_Lannes_shallow_water} and \cite{Castro_Lannes_vorticity}), we address here the case in which the Coriolis force is present. We denote the depth

\begin{equation}
h(t,X) = 1 + \epsilon \zeta(t,X) - \beta b(X),
\end{equation}

\noindent and the averaged horizontal velocity

\begin{equation}
\overline{\textbf{V}} = \overline{\textbf{V}}[\epsilon \zeta, \beta b](\psi,\bm{\omega})(t,X) = \frac{1}{h(t,X)} \int_{z=-1+\beta b(X)}^{\epsilon \zeta(t,X)} \textbf{V}[\epsilon \zeta, \beta b](\psi,\bm{\omega})(t,X,z) dz.
\end{equation}

\noindent The Saint-Venant equations (in the nondimensionalized form) are

\begin{equation}\label{shallow_water_eq}
\left\{
\begin{array}{l}
\partial_{t} \zeta + \nabla \cdot (h \overline{\textbf{V}}) = 0,\\
\partial_{t} \overline{\textbf{V}} + \epsilon \left( \overline{\textbf{V}} \cdot \nabla \right) \overline{\textbf{V}} + \nabla \zeta + \frac{\epsilon}{\text{Ro}} \overline{\textbf{V}}^{\perp} = -\nabla P.
\end{array}
\right.
\end{equation}

\noindent It is well-known that the shallow water equations are wellposed (see Chapter 6 in \cite{Lannes_ww}  or \cite{Alvarez_Lannes} without the pressure term and the Coriolis forcing and \cite{Bresch_shallow_water}) and that we have the following Proposition.

\begin{prop}\label{local_existence_SW}
\noindent Let \upshape$t_{0} > \frac{d}{2}$, $s \geq t_{0}+1$ \itshape and \upshape$\zeta_{0}, b \in H^{s}(\RD)$, $\overline{V}_{0} \in H^{s}(\RD)^{d}$.\itshape We assume that Condition \eqref{nonvanishing} is satisfied by $(\zeta_{0},b)$. Assume also that $\epsilon, \beta \text{ and } \text{Ro}$ satisfy Condition \eqref{constraints_parameters}. Then, there exists $T>0$ and a unique solution \small \upshape$\left( \zeta,\overline{V} \right) \in \mathcal{C}^{0}\left(\left[0, \frac{T}{\max(\epsilon, \beta)} \right], H^{s}(\RD)^{d+1} \right)$ \itshape \normalsize to the Saint-Venant equations \eqref{shallow_water_eq} with initial data \upshape$\left(\zeta_{0},\overline{V}_{0} \right)$\itshape. Furthermore, for all $t \leq \frac{T}{\max(\epsilon, \beta)}$,

\upshape
\begin{equation*}
\frac{1}{T} =c^{1} \text{ and } \lver \zeta(t,\cdot) \rver_{H^{s}} + \lver \overline{V}(t,\cdot) \rver_{H^{s}} \leq c^{2},
\end{equation*} 
\itshape

\noindent with \upshape$c^{j}=C \left(\frac{1}{\text{h}_{\min}}, \lver \zeta_{0} \rver_{H^{s}}, \lver b \rver_{H^{s}}, \lver \overline{V}_{0} \rver_{H^{s}} \right)$ \itshape.
\end{prop}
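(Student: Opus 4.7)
The plan is to treat \eqref{shallow_water_eq} as a quasilinear symmetrizable hyperbolic system in $U = (\zeta, \overline{V})^{T}$, written abstractly as $\partial_{t} U + \sum_{j=1}^{d} A_{j}[U] \partial_{j} U + \frac{\epsilon}{\text{Ro}} \mathcal{C} U = F[b,P]$. The Coriolis matrix $\mathcal{C}$ acts on $\overline{V}$ as rotation by $\pi/2$ and is constant and skew-symmetric, so it contributes nothing to any $L^{2}$-based energy at any order. A symmetrizer is $S(U) = \mathrm{diag}(1, h\, I_{d})$ with $h = 1 + \epsilon \zeta - \beta b$; under \eqref{nonvanishing} it is uniformly positive definite, and the usual algebraic check shows that each $S A_{j}$ is symmetric, so the classical Friedrichs / Picard theory applies.

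Solutions are then constructed by an iteration scheme: linearize by freezing the coefficients at a mollified $U^{n}$, treat the Coriolis and pressure contributions as source terms, and produce iterates $U^{n+1}$ on a common interval. Uniform $H^{s}$ bounds on that interval will follow from the energy estimate below, and a contraction argument at low regularity gives convergence and uniqueness of the limit. Continuity in time at the top regularity is obtained by a standard Bona-Smith regularization.

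The heart of the proof is the $H^{s}$ energy estimate. For $|\alpha| \leq s$, I would apply $\partial^{\alpha}$ to the system, test the $\zeta$-equation against $\partial^{\alpha} \zeta$ and the $\overline{V}$-equation against $h\, \partial^{\alpha} \overline{V}$, and integrate by parts. The principal cross terms $\int h\, \partial^{\alpha} \zeta\, \nabla \cdot \partial^{\alpha} \overline{V}$ and $\int h\, \partial^{\alpha} \overline{V} \cdot \nabla \partial^{\alpha} \zeta$ cancel up to a lower-order term in $\nabla h$, and the Coriolis contribution vanishes pointwise since $\partial^{\alpha} \overline{V} \cdot (\partial^{\alpha} \overline{V})^{\perp} = 0$. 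The remaining transport commutators $[\partial^{\alpha}, \epsilon \overline{V} \cdot \nabla]$ and product commutators $[\partial^{\alpha}, h]$ are handled by standard tame estimates, while the bottom term $\beta \overline{V} \cdot \nabla b$ and the forcing $\nabla P$ are absorbed directly. Every nonlinear or bathymetric contribution then carries a factor $\epsilon$ (from $\epsilon \overline{V}$ or $\epsilon \zeta$) or $\beta$ (from $\beta b$), leading to
\begin{equation*}
\frac{d}{dt} E_{s}(U) \leq \max(\epsilon,\beta)\, C\!\left(\frac{1}{h_{\min}}, E_{s}(U), \lver b \rver_{H^{s}}\right) E_{s}(U) + \lver \nabla P \rver_{H^{s}} \sqrt{E_{s}(U)},
\end{equation*}
with $E_{s}(U) = \sum_{|\alpha|\leq s} \int \left(|\partial^{\alpha} \zeta|^{2} + h\, |\partial^{\alpha} \overline{V}|^{2}\right)$. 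A Gronwall argument on $[0, T/\max(\epsilon,\beta)]$ then closes the estimate and yields the claimed lifetime.

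The main obstacle I anticipate is the careful accounting of the $\epsilon$ and $\beta$ scalings inside every commutator, so that the existence time is genuinely of order $\max(\epsilon,\beta)^{-1}$ rather than merely $O(1)$; the Sobolev embedding $H^{s} \hookrightarrow W^{1,\infty}$ (which requires $s \geq t_{0}+1$) is what makes the tame Moser-type estimates work. A secondary point is that the strict positivity of $h$ assumed in \eqref{nonvanishing} at $t=0$ must be propagated in time, which is obtained by a short continuity argument using the $H^{s}$ bound on $\zeta$ and is the reason the existence interval may have to be shortened.
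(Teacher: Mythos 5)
The paper does not actually prove this proposition: it cites Chapter 6 of Lannes's monograph, Alvarez-Samaniego--Lannes, and Bresch's survey as sources for well-posedness of the rotating shallow water system, so there is no in-paper argument to compare against. Your proof is the standard symmetrizable-hyperbolic-system argument, which is precisely what those references carry out: the Friedrichs symmetrizer $S(U)=\mathrm{diag}(1,h\,I_d)$ with $h=1+\epsilon\zeta-\beta b$ does make each $SA_j$ symmetric, the Coriolis matrix is skew and drops from the energy, the principal cross terms $\int h\,\partial^\alpha\zeta\,\nabla\!\cdot\partial^\alpha\overline V$ and $\int h\,\partial^\alpha\overline V\cdot\nabla\partial^\alpha\zeta$ cancel after integration by parts up to a $\nabla h$ commutator carrying a factor $\max(\epsilon,\beta)$, and tame Moser estimates close the scheme once $H^s\hookrightarrow W^{1,\infty}$, i.e.\ $s\geq t_0+1$.

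One point worth recording explicitly: the source $-\nabla P$ contributes a term $\lver \nabla P\rver_{H^s}\sqrt{E_s}$ to your energy inequality that does not carry a factor of $\max(\epsilon,\beta)$, so Gronwall on $[0,T/\max(\epsilon,\beta)]$ closes only if the lifetime $T$ and the constants $c^j$ are also allowed to depend on $\lver\nabla P\rver_{L^\infty_t H^s_{X}}$, exactly as in the paper's main Theorem~\ref{existence}. The proposition as stated omits $P$ from the $c^j$; this looks like a slip in the paper's statement rather than a defect of your argument, but your writeup should make the $P$-dependence of the lifetime and of the a priori bound explicit, or else assume that $P$ (resp.\ $\nabla P$) is of size $\max(\epsilon,\beta)$.
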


\subsection{WKB expansion with respect to $\mu$}

\noindent In this part, we study the dependence of $\text{U}^{\mu}$ with respect to $\mu$. The first Proposition shows that $\overline{\textbf{V}}$ is linked to $\underline{\text{U}}^{\mu} \cdot N^{\mu}$.

\begin{prop}\label{WKB_V}
\noindent Under the assumptions of Theorem \ref{control_U}, we have

\upshape
\begin{equation*}
\underline{\text{U}}^{\mu} \cdot N^{\mu} = - \mu \nabla \cdot \left(h \overline{\textbf{V}} \right).
\end{equation*}
\itshape

\end{prop}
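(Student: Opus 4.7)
The plan is a direct computation combining the incompressibility of $\textbf{U}^{\mu}$, the no-penetration condition at the bottom, and the Leibniz rule for differentiating an integral whose endpoints depend on $X$. There is no real analytic obstacle here; the statement is essentially a bookkeeping identity encoding that the flux of $\textbf{U}^{\mu}$ through the free surface equals the $X$-divergence of the vertically averaged horizontal flux.

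First I would unfold the left-hand side. From the definitions of $N^{\mu}$ and $\textbf{U}^{\mu}=(\sqrt{\mu}\textbf{V},\textbf{w})^{T}$, one has
\begin{equation*}
\underline{\text{U}}^{\mu}\cdot N^{\mu} = \underline{\textbf{w}} - \epsilon\mu\,\underline{\textbf{V}}\cdot\nabla\zeta.
\end{equation*}
Next, since $\text{div}^{\mu}\textbf{U}^{\mu}=0$ in $\Omega$ means $\mu\,\nabla_{\! X}\cdot\textbf{V}+\partial_{z}\textbf{w}=0$, I would integrate vertically from $z=-1+\beta b(X)$ to $z=\epsilon\zeta(X)$ to obtain
\begin{equation*}
\mu\int_{-1+\beta b}^{\epsilon\zeta}\nabla_{\! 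X}\cdot\textbf{V}\,dz + \underline{\textbf{w}} - \textbf{w}_{b} = 0.
\end{equation*}

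Then I would rewrite $\nabla\cdot(h\overline{\textbf{V}})$ using the Leibniz rule. Since $h\overline{\textbf{V}}=\int_{-1+\beta b}^{\epsilon\zeta}\textbf{V}\,dz$, differentiating under the integral yields
\begin{equation*}
\nabla\cdot(h\overline{\textbf{V}}) = \int_{-1+\beta b}^{\epsilon\zeta}\nabla_{\! X}\cdot\textbf{V}\,dz + \epsilon\,\underline{\textbf{V}}\cdot\nabla\zeta - \beta\,\textbf{V}_{b}\cdot\nabla b.
\end{equation*}
Multiplying by $\mu$ and substituting the vertically integrated divergence identity gives
\begin{equation*}
\mu\,\nabla\cdot(h\overline{\textbf{V}}) = -\underline{\textbf{w}} + \textbf{w}_{b} + \epsilon\mu\,\underline{\textbf{V}}\cdot\nabla\zeta - \mu\beta\,\textbf{V}_{b}\cdot\nabla b.
\end{equation*}

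Finally, I would invoke the bottom boundary condition $\text{U}_{b}^{\mu}\cdot N_{b}^{\mu}=0$ from \eqref{div_curl_formulation}, which reads $\textbf{w}_{b}=\mu\beta\nabla b\cdot\textbf{V}_{b}$, so the last two terms on the right-hand side cancel. What remains is
\begin{equation*}
\mu\,\nabla\cdot(h\overline{\textbf{V}}) = -\underline{\textbf{w}} + \epsilon\mu\,\underline{\textbf{V}}\cdot\nabla\zeta = -\,\underline{\text{U}}^{\mu}\cdot N^{\mu},
\end{equation*}
which is the claimed identity. The regularity afforded by Theorem \ref{control_U} (namely $\textbf{U}^{\mu}\in H^{1}(\Omega)$) is more than enough to justify Fubini and the Leibniz rule in this one-dimensional vertical integration, so no additional argument is needed.
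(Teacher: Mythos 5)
Your argument is correct and reaches the same identity, but it does so by a different route than the paper. You compute pointwise: you integrate the incompressibility relation $\mu\nabla_{\! X}\cdot\textbf{V}+\partial_{z}\textbf{w}=0$ vertically, apply the Leibniz rule to differentiate $\int_{-1+\beta b}^{\epsilon\zeta}\textbf{V}\,dz$ under the integral sign, and use the bottom condition $\textbf{w}_{b}=\mu\beta\,\textbf{V}_{b}\cdot\nabla b$ to cancel the bottom boundary term. The paper instead works in a weak formulation: it tests $\underline{\text{U}}^{\mu}\cdot N^{\mu}$ against a smooth compactly supported $\varphi(X)$, uses the divergence theorem together with $\text{div}^{\mu}\textbf{U}^{\mu}=0$ and the bottom condition to rewrite the surface integral as a volume integral $\mu\int_{\Omega}\nabla\varphi\cdot\textbf{V}$, then integrates by parts in $X$. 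The two proofs encode exactly the same ingredients — incompressibility, no-penetration at the bottom, and interchanging vertical integration with horizontal differentiation — but the paper's test-function formulation sidesteps justifying the Leibniz rule at the trace level (it only ever differentiates the smooth $\varphi$), whereas your direct version is more transparent but implicitly needs enough regularity of $\textbf{V}$ and of the graphs $\epsilon\zeta$, $-1+\beta b$ to differentiate the limits of integration; as you note, the $H^{1}(\Omega)$ regularity from Theorem~\ref{control_U} together with $\zeta,b\in W^{2,\infty}$ makes that unproblematic. Both are fine; the paper's is marginally more robust, yours is marginally more self-contained.
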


\begin{proof}
\noindent This proof is similar to Proposition 3.35 in \cite{Lannes_ww}. Consider $\varphi$ smooth and compactly supported in $\RD$. Then, a simple computation gives

\begin{align*}
\int_{\RD} \varphi \textbf{U}^{\mu} \cdot N^{\mu}  dX &= \int_{\Omega} \nabla^{\mu}_{\! X,z} \cdot \left( \varphi \textbf{U}^{\mu} \right) dX dz,\\
&= \int_{\Omega} \mu \nabla \varphi  \cdot \textbf{V} dX dz,\\
&= - \mu \int_{\RD} \varphi \nabla \cdot \left(\int_{z=-1+\beta b}^{\epsilon \zeta} \textbf{V} \right) dX.
\end{align*} 
\end{proof}

\noindent Then we need a WKB expansion with respect to $\mu$ of $\textbf{U}^{\mu}$. 

\begin{prop}\label{WKB_U}
\noindent Let $t_{0} > \frac{d}{2}$, $0 \leq s \leq t_{0}$, $\zeta \in H^{t_{0}+2}(\RD)$, $b \in L^{\infty} \cap \Hdot^{t_{0}+2}(\RD)$. Under the assumptions of Theorem \ref{control_U}, we have

\upshape
\begin{equation*}
\textbf{U}^{\mu} = \begin{pmatrix} \sqrt{\mu} \overline{\textbf{V}} + \mu \left(\int_{z}^{\epsilon \zeta} \bm{\omega}_{h}^{\perp} - \textbf{Q} \right) + \mu^{\frac{3}{2}} \widetilde{\textbf{V}}\\ \mu \widetilde{\textbf{w}} \end{pmatrix},
\end{equation*}
\itshape

\noindent with 

\upshape
\begin{equation*}
\textbf{Q}(X) = \frac{1}{h(X)} \int_{z'=-1+\beta b(X)}^{\epsilon \zeta(X)} \int_{s=z'}^{\epsilon \zeta(X)} \bm{\omega}_{h}^{\perp}(X,s),
\end{equation*}
\itshape

\noindent and 

\upshape
\begin{equation*}
\llver \widetilde{\textbf{V}} \circ \Sigma \rrver_{H^{s,1}} + \llver \widetilde{\textbf{w}} \circ \Sigma \rrver_{H^{s,1}} \leq C \left(\frac{1}{\text{h}_{\min}}, \epsilon \left\lvert \zeta \right\rvert_{H^{t_{0}+2}}, \beta \left\lvert b \right\rvert_{L^{\infty}} , \beta \left\lvert \nabla b \right\rvert_{H^{t_{0}+1}} \right) \llver \textbf{V} \circ \Sigma \rrver_{H^{t_{0}+2,1}}.
\end{equation*}
\itshape

\end{prop}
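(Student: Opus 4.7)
The plan is to extract the expansion directly from the div-curl system \eqref{div_curl_formulation} by integrating the curl and divergence equations in the vertical direction. Writing $\textbf{U}^{\mu}=(\sqrt{\mu}\textbf{V},\textbf{w})$, a direct computation of $\text{curl}^{\mu}\textbf{U}^{\mu}=\mu\bm{\omega}$ yields $\partial_{z}\textbf{V}=\nabla_{\!X}\textbf{w}-\sqrt{\mu}\,\bm{\omega}_{h}^{\perp}$, while $\text{div}^{\mu}\textbf{U}^{\mu}=0$ together with the bottom condition $\textbf{U}^{\mu}_{b}\cdot N^{\mu}_{b}=0$ gives $\partial_{z}\textbf{w}=-\mu\,\nabla_{\!X}\cdot\textbf{V}$ and $\textbf{w}_{|z=-1+\beta b}=\mu\beta\,\nabla b\cdot\textbf{V}_{|z=-1+\beta b}$.

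First I would integrate the divergence equation from the bottom, obtaining
\begin{equation*}
\textbf{w}(X,z)=\mu\Bigl(\beta\,\nabla b(X)\cdot\textbf{V}_{|z=-1+\beta b}-\int_{-1+\beta b(X)}^{z}\nabla_{\!X}\cdot\textbf{V}(X,s)\,ds\Bigr),
\end{equation*}
so that $\textbf{w}=\mu\widetilde{\textbf{w}}$ with an explicit formula. Next, to obtain the expansion of the horizontal component, I use the identity
\begin{equation*}
\textbf{V}(X,z)-\overline{\textbf{V}}(X)=\frac{1}{h(X)}\int_{-1+\beta b(X)}^{\epsilon\zeta(X)}\!\!\int_{z'}^{z}\partial_{s}\textbf{V}(X,s)\,ds\,dz',
\end{equation*}
and substitute the curl identity. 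The $\sqrt{\mu}\,\bm{\omega}_{h}^{\perp}$ contribution, after splitting $\int_{z'}^{z}=\int_{z'}^{\epsilon\zeta}-\int_{z}^{\epsilon\zeta}$ and applying Fubini, produces exactly $\sqrt{\mu}(\int_{z}^{\epsilon\zeta}\bm{\omega}_{h}^{\perp}-\textbf{Q})$. The $\nabla_{\!X}\textbf{w}$ contribution is of order $\mu$ and defines
\begin{equation*}
\widetilde{\textbf{V}}(X,z)=\frac{1}{h(X)}\int_{-1+\beta b(X)}^{\epsilon\zeta(X)}\!\!\int_{z'}^{z}\nabla_{\!X}\widetilde{\textbf{w}}(X,s)\,ds\,dz'.
\end{equation*}

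Finally I would transfer these formulas to the flat strip through $\Sigma$ and bound $\|\widetilde{\textbf{V}}\circ\Sigma\|_{H^{s,1}}+\|\widetilde{\textbf{w}}\circ\Sigma\|_{H^{s,1}}$. The formula for $\widetilde{\textbf{w}}$ involves only a trace at the bottom (controlled via Lemma \ref{trace_lemma}) and a vertical antiderivative of $\nabla_{\!X}\cdot\textbf{V}$; the formula for $\widetilde{\textbf{V}}$ involves one additional horizontal derivative applied to $\widetilde{\textbf{w}}$ together with multiplication by $1/h$. Since $1/h$ and $\nabla b$ lie in $H^{t_{0}+1}$ by the assumptions on $\zeta$ and $b$, product and composition estimates in the anisotropic spaces $H^{s,k}$ for $0\leq s\leq t_{0}$ yield the claimed bound in terms of $\|\textbf{V}\circ\Sigma\|_{H^{t_{0}+2,1}}$.

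The main obstacle is not the algebra of the expansion, which is essentially forced once one observes that $\textbf{w}=O(\mu)$, but rather the bookkeeping when one transports everything through the diffeomorphism $\Sigma$ and controls the iterated vertical integrals whose endpoints $\epsilon\zeta(X)$ and $-1+\beta b(X)$ depend on $X$. Commuting $\partial_{z}$ and horizontal derivatives past these integrals generates surface and bottom trace contributions, and keeping track of one vertical derivative (the $k=1$ in $H^{s,1}$) while only assuming $s\leq t_{0}$ is what forces the loss of two indices of regularity (the $t_{0}+2$ on the right-hand side) through the trace lemma and the product rule.
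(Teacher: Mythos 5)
Your argument is correct and follows essentially the same route as the paper: use $\operatorname{div}^{\mu}\textbf{U}^{\mu}=0$ together with the bottom condition (or, as the paper does, the surface trace identity from Proposition \ref{WKB_V}) to write $\textbf{w}=\mu\widetilde{\textbf{w}}$, then feed the horizontal part of $\text{curl}^{\mu}\textbf{U}^{\mu}=\mu\bm{\omega}$ (i.e.\ $\partial_{z}\textbf{V}=\nabla_{\!X}\textbf{w}-\sqrt{\mu}\,\bm{\omega}_{h}^{\perp}$) into the vertical-mean-zero identity for $\textbf{V}-\overline{\textbf{V}}$, finally straightening through $\Sigma$ and invoking the trace and product estimates. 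The only cosmetic difference is that the paper introduces the ansatz $\textbf{V}=\overline{\textbf{V}}+\sqrt{\mu}\,\textbf{V}_{1}$ and then solves for the trace $\underline{\textbf{V}_{1}}$ using the zero-average constraint, whereas you bypass the ansatz by writing the averaged double integral directly; the two are algebraically identical.
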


\begin{proof}
\noindent This proof is inspired from the computations of Part 2.2 in \cite{Castro_Lannes_shallow_water} and Part 5.7.1 \cite{Castro_Lannes_vorticity}. First, using the Previous Proposition, we get that 

\begin{equation*}
\underline{\text{w}} = \epsilon \mu \nabla \zeta \cdot \underline{\text{V}} - \mu \nabla \cdot \left(h \overline{\textbf{V}} \right).
\end{equation*}

\noindent Furthermore, using the fact that $\textbf{U}^{\mu}$ is divergent free we have 

\begin{equation*}
\partial_{z}  \textbf{w} = - \mu \nabla_{\! X} \cdot \textbf{V}.
\end{equation*}

\noindent Then, we obtain

\begin{align*}
\textbf{w} &= \epsilon \mu \nabla \zeta \cdot \underline{\text{V}} - \mu \nabla \cdot \left(h \overline{\textbf{V}} \right) + \mu \int_{z}^{\epsilon \zeta} \nabla_{\! X} \textbf{V}\\
&= - \mu \nabla_{\! X} \cdot \left(\int_{-1+\beta b}^{z} \textbf{V} \right).
\end{align*}

\noindent The control of $\widetilde{\textbf{w}}$ follows easily. Furthermore, using the ansatz

\begin{equation}\label{ansatz}
\textbf{V} = \overline{\textbf{V}} + \sqrt{\mu} \textbf{V}_{\! 1},
\end{equation}

\noindent  and plugging it into the orthogonal of the horizontal part of $\text{curl}^{\mu} \textbf{U}^{\mu} = \mu \bm{\omega}$, we get that

\begin{equation*}
\partial_{z} \textbf{V}_{\! 1} = \sqrt{\mu} \nabla_{\! X} \widetilde{\textbf{w}} - \bm{\omega}_{h}^{\perp}.
\end{equation*}

\noindent Then, integrating with respect to $z$ the previous equation from $z$ to $\epsilon \zeta(X)$ we get

\begin{equation}\label{intermed1}
\textbf{V}_{\! 1}(X,z) = \int_{s=z}^{\epsilon \zeta(X)} \bm{\omega}_{h}^{\perp}(X,s) ds + \underline{\textbf{V}_{\! 1}}(X) + \mu^{\frac{1}{2}} \textbf{R}(X,z),
\end{equation}

\noindent where $\textbf{R}$ is a remainder uniformly bounded with respect to $\mu$ and 

\begin{equation*}
\underline{\textbf{V}_{\! 1}} = \frac{\underline{\textbf{V}} - \overline{\textbf{V}}}{\sqrt{\mu}}.
\end{equation*}

\noindent Integrating Equation \eqref{ansatz} with respect to $z$ from $-1+\beta$ to $\epsilon \zeta$ we obtain that

\begin{equation*}
\int_{z=-1+\beta b(X)}^{\epsilon \zeta(X)} \textbf{V}_{\! 1}(X,z) dz = 0 \text{ , } \forall X \in \RD.
\end{equation*}

\noindent Then, we integrate Equation \eqref{intermed1} with respect to $z$ from $-1+\beta b$ to $\epsilon \zeta$ and we get

\begin{equation*}
h \underline{\textbf{V}_{\! 1}} = - \int_{z'=-1+\beta b}^{\epsilon \zeta} \int_{s=z'}^{\epsilon \zeta} \bm{\omega}_{h}^{\perp} + \mu^{\frac{1}{2}} \widetilde{\textbf{R}},
\end{equation*}

\noindent where $\widetilde{\textbf{R}}$ is a remainder uniformly bounded with respect to $\mu$. Plugging the previous expression into Equation \eqref{intermed1}, we get the result. The control of the remainders is straightforward thanks to Lemma \ref{boundary_control} (see also the comments about the notations of \cite{Castro_Lannes_vorticity} in Subsection \ref{transformed_div_curl}).
\end{proof}

\begin{remark}\label{WBM_w}
\noindent Under the assumptions of the previous Proposition, it is easy to check that

\upshape
\begin{equation*}
\textbf{w} = - \mu \nabla_{\! X} \cdot \left( \left[1+z - \beta b \right] \overline{\textbf{V}} \right) + \mu^{\frac{3}{2}} \textbf{w}_{1},
\end{equation*}
\itshape

\noindent with 

\upshape
\begin{equation}\label{control_remainder_mu}
\llver \textbf{w}_{1} \circ \Sigma \rrver_{H^{s,1}} \leq C \left(\frac{1}{\text{h}_{\min}}, \epsilon \left\lvert \zeta \right\rvert_{H^{t_{0}+2}}, \beta \left\lvert b \right\rvert_{L^{\infty}} , \beta \left\lvert \nabla b \right\rvert_{H^{t_{0}+1}} \right) \llver \textbf{V} \circ \Sigma \rrver_{H^{t_{0}+2,1}}.
\end{equation}
\itshape
\end{remark}

\noindent Then, we define the quantity

\begin{equation}
\textbf{Q} = \textbf{Q}[\epsilon \zeta, \beta b](\psi,\bm{\omega})(t,X) = \frac{1}{h} \int_{z'=-1+\beta b}^{\epsilon \zeta} \int_{s=z'}^{\epsilon \zeta} \bm{\omega}_{h}^{\perp}.
\end{equation}

\noindent The following Proposition shows that $\textbf{Q}$ satisfies the evolution equation 

\begin{equation}\label{Q_equation}
\partial_{t} \textbf{Q} + \epsilon \left( \overline{\textbf{V}} \cdot \nabla \right) \textbf{Q} + \epsilon \left( \textbf{Q} \cdot \nabla \right) \overline{\textbf{V}} + \frac{\epsilon}{\text{Ro}} \textbf{Q}^{\perp} = 0,
\end{equation}

\noindent up to some small terms.

\begin{prop}\label{Q_controls}
\noindent Let $T>0$, $t_{0} > \frac{d}{2}$, $0 \leq s \leq t_{0}$, $0 \leq \mu \leq 1$, \upshape$\zeta \in \mathcal{C}^{1}([0,T]; H^{t_{0}+2}(\RD))$\itshape, $b \in L^{\infty} \cap \Hdot^{t_{0}+2}(\RD)$. Let \upshape$\bm{\omega}, \textbf{V}, \textbf{w} \in \mathcal{C}^{1}([0,T]; H^{t_{0}+2}(\RD))$\itshape. Suppose that we are under the assumption of Theorem \ref{control_U}, that $\bm{\omega}$ satisfies the third equation of the Castro-Lannes system \eqref{Castro_lannes_formulation} (the vorticity equation) and that \upshape$\partial_{t} \zeta + \nabla \cdot \left(h \overline{\textbf{V}} \right) = 0$,  \itshape on $[0,T]$. Then \upshape$\textbf{Q}$ \itshape satisfies

\upshape
\begin{equation*}
\partial_{t} \textbf{Q} + \epsilon \left( \overline{\textbf{V}} \cdot \nabla \right) \textbf{Q} + \epsilon \left( \textbf{Q} \cdot \nabla \right) \overline{\textbf{V}} + \frac{\epsilon}{\text{Ro}} \textbf{Q}^{\perp} = \sqrt{\mu} \max \left(\epsilon, \frac{\epsilon}{\text{Ro}} \right) \widetilde{\textbf{R}},
\end{equation*}
\itshape

\noindent and 

\upshape
\begin{equation*}
\llver \widetilde{\textbf{R}} \circ \Sigma \rrver_{H^{s,1}} \leq C \left(\frac{1}{\text{h}_{\min}}, \epsilon \left\lvert \zeta \right\rvert_{H^{t_{0}+2}}, \beta \left\lvert b \right\rvert_{L^{\infty}} , \beta \left\lvert \nabla b \right\rvert_{H^{t_{0}+1}} \right) \llver \textbf{V} \circ \Sigma \rrver_{H^{t_{0}+2,1}}.
\end{equation*}
\itshape

\end{prop}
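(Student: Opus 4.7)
The plan is to differentiate the defining integral of $h\textbf{Q}$ in time, substitute the horizontal part of the vorticity equation~\eqref{vorticity_eq} for $\partial_{t}\bm{\omega}_{h}^{\perp}$, and then use the WKB expansions of Proposition~\ref{WKB_U} and Remark~\ref{WBM_w} to recognize the target evolution equation modulo an $O\bigl(\sqrt{\mu}\max(\epsilon,\epsilon/\text{Ro})\bigr)$ remainder.

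By Fubini's theorem,
\begin{equation*}
h\textbf{Q}(t,X) = \int_{-1+\beta b(X)}^{\epsilon\zeta(t,X)}(s+1-\beta b(X))\,\bm{\omega}_{h}^{\perp}(t,X,s)\,ds,
\end{equation*}
and differentiating in $t$ with the Leibniz rule, using the assumption $\partial_{t}\zeta = -\nabla\cdot(h\overline{\textbf{V}})$, first gives a boundary contribution proportional to $\underline{\bm{\omega}_{h}^{\perp}}$ together with $\int(s+1-\beta b)\partial_{t}\bm{\omega}_{h}^{\perp}\,ds$. Taking the $\perp$ of the horizontal part of \eqref{vorticity_eq}, and using the splits $\frac{\epsilon}{\mu}\textbf{U}^{\mu}\cdot\nabla^{\mu}_{\! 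X,z} = \epsilon\textbf{V}\cdot\nabla_{\! X} + \frac{\epsilon}{\mu}\textbf{w}\partial_{z}$ and $\frac{\epsilon}{\mu}\bm{\omega}\cdot\nabla^{\mu}_{\! X,z} = \frac{\epsilon}{\sqrt{\mu}}\bm{\omega}_{h}\cdot\nabla_{\! X} + \frac{\epsilon}{\mu}\omega_{v}\partial_{z}$, the only terms a priori singular in $\mu$ are $\frac{\epsilon}{\sqrt{\mu}}\omega_{v}(\partial_{z}\textbf{V})^{\perp}$ and $\frac{\epsilon}{\sqrt{\mu}\,\text{Ro}}(\partial_{z}\textbf{V})^{\perp}$. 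From Proposition~\ref{WKB_U} I read off $\partial_{z}\textbf{V} = -\sqrt{\mu}\,\bm{\omega}_{h}^{\perp} + O(\mu)$, hence $(\partial_{z}\textbf{V})^{\perp} = \sqrt{\mu}\,\bm{\omega}_{h} + O(\mu)$, which exactly kills the $1/\sqrt{\mu}$ factors and turns these two terms into $\epsilon\,\omega_{v}\bm{\omega}_{h} + \frac{\epsilon}{\text{Ro}}\bm{\omega}_{h}$ plus a remainder of the advertised size.

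Next I multiply everything by $(s+1-\beta b)$ and integrate in $z$. The horizontal transport term $-\epsilon\textbf{V}\cdot\nabla_{\! X}\bm{\omega}_{h}^{\perp}$, after moving $\nabla_{\! X}$ outside using Leibniz on the $z$-integral (whose bounds depend on $X$ through $\zeta$ and $b$), reconstructs $-\epsilon h(\overline{\textbf{V}}\cdot\nabla)\textbf{Q}$ together with lower-order corrections involving $\nabla h = \epsilon\nabla\zeta - \beta\nabla b$. The vertical advection $\frac{\epsilon}{\mu}\textbf{w}\partial_{z}\bm{\omega}_{h}^{\perp}$ is integrated by parts in $z$; its two boundary contributions cancel against the Leibniz boundary term from the first paragraph and the $\nabla h$ corrections just mentioned, using $\underline{\text{w}} = -\mu\nabla\cdot(h\overline{\textbf{V}}) + \epsilon\mu\nabla\zeta\cdot\underline{\textbf{V}}$, $\textbf{w}_{b} = \mu\beta\nabla b\cdot\textbf{V}_{b}$, and mass conservation. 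The stretching term $\epsilon((\bm{\omega}_{h}\cdot\nabla_{\! X})\textbf{V})^{\perp}$ combined with the leading-order piece $\epsilon\omega_{v}\bm{\omega}_{h}$ collapses to $-\epsilon h(\textbf{Q}\cdot\nabla)\overline{\textbf{V}}$ after using $\omega_{v} \approx \partial_{x}\overline{\textbf{V}}_{2} - \partial_{y}\overline{\textbf{V}}_{1}$, a standard $2$D vectorial identity, and the key identity $\int(s+1-\beta b)\bm{\omega}_{h}\,ds = -h\textbf{Q}^{\perp}$. Finally the Coriolis source integrates to $-\frac{\epsilon h}{\text{Ro}}\textbf{Q}^{\perp}$ by the same identity. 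Moving the four contributions to the left-hand side of $h\partial_{t}\textbf{Q} = (\cdots)$ and dividing by $h$ yields the target equation.

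The main obstacle is this bookkeeping: several of the intermediate terms are naively of size $\epsilon/\sqrt{\mu}$ or $\epsilon^{2}$, and their cancellation down to an $O(\sqrt{\mu}\max(\epsilon,\epsilon/\text{Ro}))$ residue rests on using simultaneously the WKB identity, the divergence-free constraint $\nabla^{\sigma,\mu}\cdot\omega = 0$, the zero-mean identity $\int_{-1+\beta b}^{\epsilon\zeta}\textbf{V}_{1}\,dz = 0$ built into the proof of Proposition~\ref{WKB_U}, and $\partial_{t}\zeta + \nabla\cdot(h\overline{\textbf{V}}) = 0$. Once this is in place, the $H^{s,1}$-bound on $\widetilde{\textbf{R}}$ is routine: it follows from the product, trace, and Sobolev estimates already used in the proof of Proposition~\ref{WKB_U} (in particular Lemma~\ref{boundary_control} and Lemma~\ref{trace_lemma}), combined with the $\mathcal{C}^{1}$-in-time regularity of $\bm{\omega}, \textbf{V}, \textbf{w}, \zeta$ assumed in the statement.
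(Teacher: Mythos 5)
Your plan follows the same route as the paper's proof: start from the horizontal part of the vorticity equation, use the WKB expansion of $\textbf{U}^{\mu}$ (in particular $\partial_{z}\textbf{V}=-\sqrt{\mu}\,\bm{\omega}_{h}^{\perp}+O(\mu)$ and Remark~\ref{WBM_w} for $\textbf{w}$) to tame the $1/\sqrt{\mu}$-singular stretching and Coriolis terms, then integrate in $z$ and invoke $\partial_{t}\zeta+\nabla\cdot(h\overline{\textbf{V}})=0$ to assemble the evolution equation for $\textbf{Q}$. The only difference is organizational: the paper reaches $\textbf{Q}$ in two stages through the intermediate unknown $\textbf{V}_{sh}=\int_{z}^{\epsilon\zeta}\bm{\omega}_{h}^{\perp}$, whereas you collapse to a single weighted integral via Fubini, $h\textbf{Q}=\int(s+1-\beta b)\,\bm{\omega}_{h}^{\perp}\,ds$, but the cancellations and estimates invoked are the same.
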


\begin{proof}
\noindent This proof is inspired from Subsection 2.3 in \cite{Castro_Lannes_shallow_water}. We know that $\bm{\omega}_{h}$ satisfies 

\begin{equation*}
\partial_{t} \bm{\omega}_{h} + \epsilon \left( \textbf{V} \cdot \nabla \right) \bm{\omega}_{h} + \frac{\epsilon}{\mu} \textbf{w} \partial_{z} \bm{\omega}_{h} = \epsilon \left( \bm{\omega}_{h} \cdot \nabla \right) \textbf{V} + \frac{\epsilon}{\sqrt{\mu}} \left( \bm{\omega}_{v} + \frac{1}{\text{Ro}} \right) \partial_{z} \textbf{V}.
\end{equation*} 

\noindent Using Proposition \ref{WKB_V} and Remark \ref{WBM_w} and the fact that $\bm{\omega}_{v} = \nabla^{\perp} \cdot \textbf{V}$, we get

\small
\begin{align*}
\partial_{t} \bm{\omega}_{h} + \epsilon \left( \overline{\textbf{V}} \cdot \nabla \right) \bm{\omega}_{h} - \epsilon \nabla_{\! X} \cdot \left( \left[1+z - \beta b \right] \overline{\textbf{V}} \right) \partial_{z} \bm{\omega}_{h} &= \epsilon \left( \bm{\omega}_{h} \cdot \nabla \right) \overline{\textbf{V}} - \epsilon \left( \nabla^{\perp} \cdot \overline{\textbf{V}} + \frac{1}{\text{Ro}} \right) \bm{\omega}_{h}^{\perp}\\
&\hspace{0.5cm} +  \sqrt{\mu} \max \left(\epsilon, \frac{\epsilon}{\text{Ro}} \right) \textbf{R},
\end{align*}
\normalsize

\noindent where $\textbf{R} \circ \Sigma$ satisfies the same estimate as $\textbf{w}_{1} \circ \Sigma$ in \eqref{control_remainder_mu}. If we denote $\textbf{V}_{\! sh} = \int_{z}^{\epsilon \zeta} \bm{\omega}^{\perp}_{h}$, doing the same computations as in Subsection 2.3 \cite{Castro_Lannes_shallow_water} and using the fact that $\partial_{t} \zeta + \nabla \cdot \left(h  \overline{\textbf{V}} \right) = 0$, we get

\small
\begin{equation*}
\partial_{t} \textbf{V}_{\! sh} + \epsilon \left( \overline{\textbf{V}} \cdot \nabla \right) \textbf{V}_{\! sh} + \epsilon \left(\textbf{V}_{\! sh} \cdot \nabla \right) \overline{\textbf{V}} - \nabla \cdot \left( \left[1+z-\beta b \right] \overline{\textbf{V}} \right) + \frac{\epsilon}{\text{Ro}} \textbf{V}_{\! sh}^{\perp} = \sqrt{\mu} \max \left(\epsilon, \frac{\epsilon}{\text{Ro}} \right) \int_{z}^{\epsilon \zeta} \hspace{-0.3cm} \textbf{R}.
\end{equation*}
\normalsize

\noindent Then, integrating this expression with respect to $z$ and using again the fact that $\partial_{t} \zeta + \nabla \cdot \left(h  \overline{\textbf{V}} \right) = 0$, we get 

\begin{equation*}
\partial_{t} \textbf{Q} + \epsilon \left( \overline{\textbf{V}} \cdot \nabla \right) \textbf{Q} + \epsilon \left( \textbf{Q} \cdot \nabla \right) \overline{\textbf{V}} + \frac{\epsilon}{\text{Ro}} \textbf{Q}^{\perp} = \sqrt{\mu} \max \left(\epsilon, \frac{\epsilon}{\text{Ro}} \right) \int_{-1+\beta b}^{\epsilon \zeta} \int_{z}^{\epsilon \zeta} \textbf{R},
\end{equation*}

\noindent and the result follows easily.
\end{proof}

\subsection{Rigorous derivation}

\noindent The purpose of this part is to prove a rigorous derivation of the water waves equations to the shallow water equations. This part is devoted to the proof of the following Theorem. We recall that $\Sigma$ is defined in \eqref{diffeo}.

\begin{thm}
\noindent Let \upshape$N \geq 6$, $0 \leq \mu \leq 1$, $\epsilon, \beta, \text{Ro}$ \itshape satisfying \eqref{constraints_parameters}. We assume that we are under the assumptions of Theorem \ref{existence}. Then, we can define the following quantity \upshape $\bm{\omega}_{0} = \omega_{0} \circ \Sigma^{-1}$, $\bm{\omega} = \omega \circ \Sigma^{-1}$, $\overline{\textbf{V}}_{0} = \overline{\textbf{V}} [\epsilon \zeta_{0}, \beta b](\psi_{0},\bm{\omega}_{0})$, $\overline{\textbf{V}} = \overline{\textbf{V}} [\epsilon \zeta, \beta b](\psi,\bm{\omega})$, $\textbf{Q}_{0} = \textbf{Q} [\epsilon \zeta_{0}, \beta b](\psi_{0},\bm{\omega}_{0})$ and $\textbf{Q} = \textbf{Q} [\epsilon \zeta, \beta b](\psi,\bm{\omega})$\itshape and there exists a time $T > 0$ such that

\medskip

\noindent (i) $T$ has the form

\begin{equation*}
T = \min \left( \frac{T_{0}}{\max(\epsilon, \beta, \frac{\epsilon}{\text{Ro}})}, \frac{T_{0}}{ \left\lvert \nabla P \right\rvert_{L^{\infty}_{t} H_{\! X}^{N}}} \right) \text{ and }  \frac{1}{T_{0}} =c^{1}.
\end{equation*}
\itshape

\medskip

\noindent (ii) There exists a unique solution \upshape$\left(\zeta_{SW},\overline{\textbf{V}}_{SW} \right)$ \itshape of \eqref{shallow_water_eq} with initial conditions \upshape$\left(\zeta_{0}, \overline{\textbf{V}}_{0} \right)$ \itshape on $\left[ 0,T \right]$.

\medskip

\noindent(iii) There exists a unique solution \upshape$\textbf{Q}_{SW}$ \itshape to Equation \eqref{Q_equation} on $\left[ 0,T \right]$.

\medskip

\noindent (iv)  There exists a unique solution \upshape$\left(\zeta,\psi,\omega \right)$ \itshape of \eqref{Castro_lannes_formulation_straight} with initial conditions \upshape$\left(\zeta_{0}, \psi_{0}, \omega_{0} \right)$ \itshape on $\left[ 0,T \right]$.

\medskip

\noindent (v) The following error estimates hold, for $0 \leq t \leq T$,

\upshape
\begin{equation*}
\lver \left(\zeta,\overline{\textbf{V}}, \sqrt{\mu} \textbf{Q} \right) - \left(\zeta_{SW},\overline{\textbf{V}}_{SW}, \sqrt{\mu} \textbf{Q}_{SW} \right) \rver_{L^{\infty}([0,t] \times \RD)} \leq \mu \, t c^{2},
\end{equation*}
\itshape

\noindent and 

\upshape
\begin{equation*}
\lver \underline{\textbf{V}} - \overline{\textbf{V}} + \sqrt{\mu} \textbf{Q} \rver_{L^{\infty}([0,T] \times \RD)} \leq \mu \, c^{3},
\end{equation*}
\itshape

\noindent with \upshape $c^{j} = C \left(A, \mu_{\max}, \frac{1}{h_{\min}}, \frac{1}{\mathfrak{a}_{\min}},\left\lvert b \right\rvert_{L^{\infty}}, \left\lvert \nabla b \right\rvert_{H^{N+1}}, \left\lvert \nabla P \right\rvert_{W^{1,\infty}_{t} H_{\! X}^{N}} \right)$.\itshape
\end{thm}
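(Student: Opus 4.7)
The plan is to combine the three existence results already in place with a standard consistency-plus-stability argument. Existence and uniqueness of the water waves solution $(\zeta,\psi,\omega)$ on the claimed interval is provided by Theorem \ref{existence}; existence of $(\zeta_{SW},\overline{\textbf{V}}_{SW})$ comes from Proposition \ref{local_existence_SW} applied to the initial data $(\zeta_0,\overline{\textbf{V}}_0)$, which is well-defined since $\overline{\textbf{V}}_0 = \overline{\textbf{V}}[\epsilon\zeta_0,\beta b](\psi_0,\bm{\omega}_0)$ is $H^N$-controlled by $\mathcal{E}^N(\zeta_0,\psi_0,\omega_0)$ via Theorem \ref{high_order_estimate_U}; and $\textbf{Q}_{SW}$ solves a linear transport equation with $L^\infty_t H^{N-1}_X$ coefficients $\overline{\textbf{V}}_{SW}$, hence exists on the whole $[0,T]$ by characteristics and Gronwall. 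By taking the minimum of the three lifetimes one obtains an interval $[0,T]$ of the claimed form.

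The crux is the consistency step: I will show that the WKB-reconstructed triple $(\zeta,\overline{\textbf{V}},\sqrt{\mu}\textbf{Q})$ extracted from $(\zeta,\psi,\omega)$ satisfies the coupled shallow water plus $\textbf{Q}$-equation system up to an $O(\mu)$ residual. First, Proposition \ref{WKB_V} converts the kinematic equation $\partial_t\zeta - \tfrac{1}{\mu}\underline{\text{U}}^{\mu}\cdot N^{\mu}=0$ \emph{exactly} into $\partial_t\zeta + \nabla\cdot(h\overline{\textbf{V}})=0$. For the momentum equation, I would revisit the horizontal component of the nondimensionalized Euler-Coriolis system \eqref{nondim_Euler_equations}, average it in $z$ over the water column, integrate by parts using the kinematic boundary conditions, and substitute the WKB expansion of Proposition \ref{WKB_U}:
\begin{equation*}
\textbf{V} = \overline{\textbf{V}} + \sqrt{\mu}\Bigl(\int_{z}^{\epsilon\zeta}\bm{\omega}_h^{\perp} - \textbf{Q}\Bigr) + \mu \, \widetilde{\textbf{V}}.
\end{equation*}
The Reynolds-stress commutator $\overline{\textbf{V}\otimes\textbf{V}} - \overline{\textbf{V}}\otimes\overline{\textbf{V}}$ is $O(\mu)$ (the $\sqrt{\mu}$ shear corrections of $\textbf{V}$ have zero vertical average by construction of $\textbf{Q}$, so they contribute only to the quadratic term, which is $O(\mu)$); the hydrostatic expansion of $\widetilde{\mathcal{P}}/\epsilon$ yields $\nabla\zeta + O(\mu)$; and the Coriolis term averages cleanly to $\tfrac{\epsilon}{\text{Ro}}\overline{\textbf{V}}^\perp$. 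The result is
\begin{equation*}
\partial_t\overline{\textbf{V}} + \epsilon(\overline{\textbf{V}}\cdot\nabla)\overline{\textbf{V}} + \nabla\zeta + \frac{\epsilon}{\text{Ro}}\overline{\textbf{V}}^{\perp} = -\nabla P + \mu\, \textbf{R}_1,
\end{equation*}
where $\textbf{R}_1$ is controlled in $H^{N-2}$ by the water waves energy through Theorem \ref{high_order_estimate_U} and the uniform bound on $\mathcal{E}^N$ from Theorem \ref{energy_estimate}. Proposition \ref{Q_controls} provides the analogous consistency for $\textbf{Q}$ with residual $\sqrt{\mu}\max(\epsilon,\epsilon/\text{Ro})\,\widetilde{\textbf{R}}$.

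The stability step is then a routine energy estimate on the differences $(\delta\zeta,\delta\overline{\textbf{V}},\sqrt{\mu}\,\delta\textbf{Q}) := (\zeta-\zeta_{SW},\overline{\textbf{V}}-\overline{\textbf{V}}_{SW},\sqrt{\mu}(\textbf{Q}-\textbf{Q}_{SW}))$. The linearization of \eqref{shallow_water_eq} around $(\zeta_{SW},\overline{\textbf{V}}_{SW})$ is symmetrizable hyperbolic (the Coriolis term is skew-symmetric and enters at order zero, so it contributes nothing to the symmetric energy), and the linearization of \eqref{Q_equation} is a transport equation with zero-order skew-symmetric Coriolis coupling. A standard Friedrichs-type estimate in $H^{s}$ with $s$ slightly less than $N-1$, together with the $O(\mu)$ consistency residuals and vanishing initial error, gives $\|(\delta\zeta,\delta\overline{\textbf{V}},\sqrt{\mu}\,\delta\textbf{Q})(t)\|_{L^\infty}\lesssim \mu t$ via Sobolev embedding and Gronwall, proving the first error estimate in (v). The second estimate follows from evaluating the WKB expansion of Proposition \ref{WKB_U} at $z=\epsilon\zeta$: the integral $\int_{z}^{\epsilon\zeta}\bm{\omega}_h^\perp$ vanishes at the surface, so $\underline{\textbf{V}} - \overline{\textbf{V}} + \sqrt{\mu}\textbf{Q}$ equals $\mu\,\underline{\widetilde{\textbf{V}}}$, which is $O(\mu)$ in $L^\infty$ by the trace lemma and the uniform control of $\widetilde{\textbf{V}}\circ\Sigma$ in $H^{s,1}$ for $s>d/2$.

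The main obstacle I anticipate is the consistency calculation for the momentum equation: carefully tracking the $O(\mu)$ terms produced by averaging the bilinear advection, by the integration by parts against a non-flat bottom $b$ and against the time-varying graph $\epsilon\zeta$, and by the surface contribution from the internal pressure $\widetilde{\mathcal{P}}$. Each of these produces boundary integrals that must be recombined and estimated in the correct $H^{s}$ norm uniformly in $\mu\in(0,\mu_{\max}]$ using Theorem \ref{high_order_estimate_U}, Remark \ref{control_omega_bott} and the bootstrap from Theorem \ref{energy_estimate}. Once this bookkeeping is complete, the rest of the argument is standard.
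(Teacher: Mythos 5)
Your high-level structure (existence from the three results already in place, then consistency-plus-stability via the WKB expansion and a Gronwall argument in the style of Proposition~6.3 of Lannes' book) is exactly the paper's plan, and your derivation of the second estimate in (v) by evaluating Proposition~\ref{WKB_U} at $z=\epsilon\zeta$ is the same observation the paper uses. However, the consistency step for the momentum equation follows a genuinely different route. You propose to depth-average the horizontal Euler--Coriolis equation over the fluid column, which reintroduces the internal pressure $\widetilde{\mathcal{P}}$ and requires proving that its horizontal gradient is hydrostatic up to an $O(\mu)$ correction controlled by the energy, together with a Reynolds-stress commutator analysis; none of the pressure estimates needed for this are established in the paper (the Castro--Lannes formulation was designed precisely to eliminate the pressure from the unknowns). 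The paper instead takes the gradient of the second Castro--Lannes equation for $\partial_t\psi$, combines it with the transport equation for the surface vorticity trace $\underline{\omega}\cdot N^{\mu}$ from Remark~\ref{transport_eq_vort_surf}, and recovers the exact surface Bernoulli-type equation \eqref{eq_int2} for $\Umus$ in which the pressure has already been eliminated; one then substitutes the WKB expansion $\Umus = \overline{\textbf{V}} - \sqrt{\mu}\,\textbf{Q} + \mu\,\textbf{R}$ and reads off the shallow-water momentum equation plus $-\sqrt{\mu}$ times the $\textbf{Q}$-equation plus $O(\mu)$. The paper's route is cleaner because it uses only trace quantities already controlled by Theorems~\ref{control_U}, \ref{high_order_estimate_U} and Proposition~\ref{time_derivative_control}; your depth-averaging route is the classical formal derivation, but to make it rigorous you would need a non-hydrostatic pressure expansion uniform in $\mu$, which is a substantial additional step that the paper bypasses. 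You should either supply those pressure estimates (along the lines of the irrotational case in Lannes' book, but adapted to the rotational, Coriolis-forced setting) or switch to the surface-equation route.
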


\begin{remark}
\noindent Hence, in shallow waters the rotating Saint-Venant equations are a good model to approximate the water waves equations under a Coriolis forcing. Furthermore, we notice that if we start initially with a irrotational flow, at the order $\mu$, the flow stays irrotational. It means that a Coriolis forcing (not too fast) does not generate a horizontal vorticity in shallow waters and the assumption of a columnar motion, which is the fact that the velocity is horizontal and independent of the vertical variable $z$, stays valid. It could be interesting to develop an asymptotic model of the water waves equations at the order $\mu^{2}$ (Green-Naghdi or Boussinesq models) and study the influence a Coriolis forcing in these models. It will be done in a future work \cite{my_long_wave_corio}.
\end{remark}

\begin{proof}
\noindent The point \textit{(ii)} follows from Proposition \ref{local_existence_SW} and the point \textit{(iv)} from Theorem \ref{existence}. Since, Equation \eqref{Q_equation} is linear, the point \textit{(iii)} is clear. We only need to show that $\left(\zeta,\overline{\textbf{V}} \right)$ satisfy the shallow water equations up to a remainder of order $\mu$. Then, a small adaptation of Proposition 6.3 in \cite{Lannes_ww} allows us to prove the point \textit{(v)}. First, we know that 

\begin{equation*}
 \partial_{t} \psi \hspace{-0.05cm} + \hspace{-0.05cm} \zeta \hspace{-0.05cm} + \hspace{-0.05cm} \frac{\epsilon}{2} \hspace{-0.05cm} \left\lvert  \text{U}^{\mu}_{\! \sslash} \right\rvert^{2} \hspace{-0.15cm} - \hspace{-0.05cm} \frac{\epsilon}{2 \mu} \hspace{-0.05cm} \left(\hspace{-0.05cm} 1 + \hspace{-0.05cm} \epsilon^{2} \mu \left\lvert \nabla \zeta \right\rvert^{2} \hspace{-0.05cm} \right) \! \underline{\textbf{w}}^{2} \hspace{-0.1cm} + \hspace{-0.1cm} \epsilon \frac{\nabla}{\Delta} \hspace{-0.1cm} \cdot \hspace{-0.1cm} \left[ \left( \underline{\bm{\omega}} \hspace{-0.05cm} \cdot \hspace{-0.05cm} N^{\mu} \hspace{0.05cm} + \frac{1}{\text{Ro}} \right) \underline{\textbf{V}}^{\perp} \right] = - P,
\end{equation*}

\noindent and  

\begin{equation*}
\partial_{t} \left( \underline{\omega} \cdot N^{\mu} \right) + \epsilon \nabla \cdot \left( \left[\underline{\omega} \cdot N^{\mu} + \frac{1}{\text{Ro}} \right] \underline{\text{V}} \right) = 0.
\end{equation*}

\noindent Since $\Us^{\mu} = \nabla \psi + \frac{\nabla^{\perp}}{\Delta} \left( \underline{\bm{\omega}} \cdot N^{\mu} \right)$, we get that

\begin{equation*}
\partial_{t} \Umus +  \nabla \zeta + \frac{\epsilon}{2} \nabla \left\lvert \Umus \right\rvert^{2} - \frac{\epsilon}{2 \mu} \nabla \left[ \left( 1 +  \epsilon^{2} \mu \left\lvert \nabla \zeta\right\rvert^{2} \right) \underline{\textbf{w}}^{2} \right] + \epsilon \left( \underline{\bm{\omega}} \hspace{-0.05cm} \cdot \hspace{-0.05cm} N^{\mu} \hspace{0.05cm} + \frac{1}{\text{Ro}} \right) \underline{\textbf{V}}^{\perp} = - \nabla P.
\end{equation*}

\noindent Then, using Proposition \ref{WKB_U} and plugging the fact that $\Umus = \overline{\textbf{V}} - \sqrt{\mu} \textbf{Q} + \mu \textbf{R}$, we get

\begin{align*}
\partial_{t} \overline{\textbf{V}} + \epsilon \left( \overline{\textbf{V}} \cdot \nabla \right) \overline{\textbf{V}} + \nabla \zeta + &\frac{\epsilon}{\text{Ro}} \overline{\textbf{V}}^{\perp} + \nabla P - \sqrt{\mu} \Big( \partial_{t} \textbf{Q}\\
&+\epsilon \left(\overline{\textbf{V}} \cdot \nabla \right) \textbf{Q} + \epsilon \left( \textbf{Q} \cdot \nabla \right) \overline{\textbf{V}} + \frac{\epsilon}{\text{Ro}} \textbf{Q}^{\perp} \Big) = - \mu \partial_{t} \textbf{R} + \widetilde{\textbf{R}},
\end{align*}

\noindent and using the same idea as Proposition \ref{WKB_U}, it is easy to check that

\begin{align*}
\llver \widetilde{\textbf{R}} \circ \Sigma \rrver_{H^{2,1}} \hspace{-0.5cm} + \llver \partial_{t} \textbf{R} \circ \Sigma \rrver_{H^{2,1}} \leq C \left(\frac{1}{\text{h}_{\min}}, \epsilon \left\lvert \zeta \right\rvert_{H^{4}}, \epsilon \left\lvert  \partial_{t} \zeta \right\rvert_{H^{4}}, \beta \left\lvert b \right\rvert_{L^{\infty}} , \beta \left\lvert \nabla b \right\rvert_{H^{3}} \right) \times&\\
&\hspace{-4cm} \left(\llver \textbf{V} \circ \Sigma \rrver_{H^{4,1}} + \llver \partial_{t} \textbf{V} \circ \Sigma \rrver_{H^{4,1}} \right).
\end{align*}

\noindent Using Proposition \ref{Q_controls}, Theorem \ref{existence}, Theorems \ref{control_U} and \ref{high_order_estimate_U} and Remark \ref{control_omega_bott}, we get the result .
\end{proof}

\newpage
\appendix

\section{Useful estimates}

\noindent In this part, we give some classical estimates. See \cite{alinhac_gerard}, \cite{Lannes_ww} or \cite{Lannes_sharp_estimates} for the proofs.

\begin{lemma}\label{P_product}
\noindent Let \upshape $u \in W^{1,\infty}(\RD)$ \itshape and \upshape $v \in H^{\frac{1}{2}}(\RD)$ \itshape. Then,

\upshape
\begin{equation*}
\left\lvert \sqrt{\mu} \mathfrak{P} \left( u v \right) \right\rvert_{2} \leq C \left( \mu_{\max} \right) \left\lvert u \right\rvert_{W^{1,\infty}(\RD)} \left\lvert \sqrt{1+\sqrt{\mu} |D|} v \right\rvert_{2}.
\end{equation*}
\itshape
\end{lemma}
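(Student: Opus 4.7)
Let me denote $\Lambda_\mu := \sqrt{1+\sqrt{\mu}|D|}$, so that $\mathfrak{P} = |D|/\Lambda_\mu$. The whole strategy rests on the algebraic identity
$$\sqrt{\mu}\,\mathfrak{P} = \Lambda_\mu - \Lambda_\mu^{-1},$$
which follows at the symbol level from $\tfrac{r}{\sqrt{1+r}} = \sqrt{1+r} - \tfrac{1}{\sqrt{1+r}}$ applied with $r = \sqrt{\mu}|\xi|$. This splits the estimate into the ``lower-order'' piece $\Lambda_\mu^{-1}(uv)$, immediately controlled by $|uv|_2 \leq |u|_\infty |v|_2 \leq |u|_\infty|\Lambda_\mu v|_2$ since $\Lambda_\mu^{-1}$ has symbol bounded by $1$, and the ``leading'' piece $\Lambda_\mu(uv)$.

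For the leading piece I would use the Leibniz decomposition
$$\Lambda_\mu(uv) = u\,\Lambda_\mu v + [\Lambda_\mu,u]\,v.$$
H\"older handles the first term: $|u\,\Lambda_\mu v|_2 \leq |u|_\infty |\Lambda_\mu v|_2$. The whole inequality thus reduces to the commutator bound
$$|[\Lambda_\mu,u]\,v|_2 \leq C(\mu_{\max})\,|\nabla u|_\infty\,|v|_2,$$
which, combined with the trivial $|v|_2 \leq |\Lambda_\mu v|_2$, completes the proof.

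The main obstacle is precisely this commutator estimate, because $\sqrt{1+\sqrt{\mu}|\xi|}$ is \emph{not} a standard order-zero Calder\'on--Zygmund symbol: its $\xi$-derivative is uniformly bounded by $\sqrt{\mu}/2$ but does not decay in $|\xi|$, so no black-box pseudodifferential statement applies directly. I would prove it by writing
$$[\Lambda_\mu,u]v(x) = \int K_\mu(x-y)\bigl(u(x)-u(y)\bigr)v(y)\,dy,$$
where $K_\mu$ is the Schwartz kernel of $\Lambda_\mu - 1$, then using $u(x)-u(y) = (x-y)\cdot\int_0^1 \nabla u(y+\theta(x-y))\,d\theta$ to absorb one power of $|x-y|$, and verifying by the rescaling $\xi \mapsto \xi/\sqrt{\mu}$ that $|y|\,K_\mu(y)$ is an $L^1$ function with norm $\lesssim \sqrt{\mu}$; Young's inequality then concludes. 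An alternative route closer to the framework of Lannes' book is to use Bony's paraproduct decomposition $uv = T_u v + T_v u + R(u,v)$ and estimate each dyadic piece using the pointwise symbol bound $\sqrt{\mu}|\xi|/\sqrt{1+\sqrt{\mu}|\xi|} \leq \sqrt{1+\sqrt{\mu}|\xi|}$, with $\nabla u \in L^\infty$ providing the decay needed to sum over dyadic blocks.
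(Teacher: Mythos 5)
The paper itself does not supply a proof of this lemma (it defers to the cited references of Alinhac--G\'erard and Lannes), so I can only assess your argument on its own terms. Your overall architecture is sound and elegant: the identity $\sqrt{\mu}\,\mathfrak{P} = \Lambda_\mu - \Lambda_\mu^{-1}$ (with $\Lambda_\mu = \sqrt{1+\sqrt{\mu}|D|}$), which follows from $r/\sqrt{1+r} = \sqrt{1+r} - 1/\sqrt{1+r}$, is correct, and the reduction to the commutator bound $\lvert [\Lambda_\mu,u]v\rvert_2 \le C(\mu_{\max})\,\lvert\nabla u\rvert_\infty\,\lvert v\rvert_2$ via the Leibniz split $\Lambda_\mu(uv) = u\Lambda_\mu v + [\Lambda_\mu,u]v$ is a legitimate strategy.

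The gap is in the kernel verification. You claim that $\lvert y\rvert K_\mu(y) \in L^1$ with norm $\lesssim \sqrt{\mu}$, where $K_\mu$ is the kernel of $\Lambda_\mu - 1$, but this is false. The scaling $K_\mu(y) = \mu^{-d/2}K_1(y/\sqrt{\mu})$ reduces the claim to $\lvert z\rvert K_1(z)\in L^1$, and this fails at the tail: the symbol $\sqrt{1+\lvert\xi\rvert}-1$ is Lipschitz but not $C^1$ at $\xi = 0$ (the gradient contains the discontinuous factor $\xi/\lvert\xi\rvert$), so $K_1(z)$ decays only like $\lvert z\rvert^{-d-1}$ as $\lvert z\rvert\to\infty$, and $\int_{\lvert z\rvert>1}\lvert z\rvert\,\lvert K_1(z)\rvert\,dz \sim \int_1^\infty r^{-1}\,dr$ diverges logarithmically, in both $d=1$ and $d=2$. (The growth $\sim\lvert\xi\rvert^{1/2}$ at infinity, which you flagged as the delicate issue, is actually benign: it only produces an integrable $\lvert z\rvert^{-d-1/2}$ singularity at $z=0$, and $\lvert z\rvert K_1$ is locally integrable.) The commutator estimate itself is still true, and your argument can be repaired by splitting the integral into $\lvert x-y\rvert\le\delta$ and $\lvert x-y\rvert>\delta$, using $\lvert u(x)-u(y)\rvert\le\lvert x-y\rvert\,\lvert\nabla u\rvert_\infty$ in the near field and the trivial bound $\lvert u(x)-u(y)\rvert\le 2\lvert u\rvert_\infty$ in the far field (where $K_\mu$ itself \emph{is} integrable); this yields a bound by $\lvert u\rvert_{W^{1,\infty}}$ rather than $\lvert\nabla u\rvert_\infty$ alone, which is all the lemma requires. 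Your alternative paraproduct route is also viable, and in fact closer in spirit to what one would do following Lannes' book, but as written it is only a sketch and the low-frequency blocks ($2^j\le 1$, where the Bernstein gain $\lvert\Delta_j u\rvert_\infty\lesssim 2^{-j}\lvert\nabla u\rvert_\infty$ is useless) need the small-frequency bound $\sqrt{\mu}\,\lvert\xi\rvert/\sqrt{1+\sqrt{\mu}\lvert\xi\rvert}\lesssim\sqrt{\mu}\,\lvert\xi\rvert$ together with $\lvert u\rvert_\infty$ to make the dyadic sum converge.
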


\begin{lemma}\label{P_commutator}
\noindent Let \upshape $t_{0} > \frac{d}{2}$, $u \in H^{t_{0}+1}(\RD)$ \itshape and \upshape $v \in H^{\frac{1}{2}}(\RD)$. \itshape Then,

\upshape
\begin{equation*}
\left\lvert \left[\mathfrak{P},u \right] v \right\rvert_{2} \leq C \left\lvert u \right\rvert_{H^{t_{0}+1}} \left( \left\lvert v \right\rvert_{2} + \left\lvert \mathfrak{P} v \right\rvert_{2} \right).
\end{equation*}
\itshape
\end{lemma}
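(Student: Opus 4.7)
The plan is to reduce this to a pointwise bound on the symbol $p(\xi):=|\xi|/\sqrt{1+\sqrt\mu\,|\xi|}$ of $\mathfrak{P}$, and then conclude by a classical bilinear multiplier estimate.

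First I would write down the Fourier representation of the commutator,
\begin{equation*}
\widehat{[\mathfrak{P},u]v}(\xi)=(2\pi)^{-d}\int_{\RD}\bigl(p(\xi)-p(\eta)\bigr)\hat u(\xi-\eta)\hat v(\eta)\,d\eta,
\end{equation*}
so that bounding $[\mathfrak{P},u]v$ in $L^2$ reduces to estimating the bilinear Fourier multiplier with symbol $p(\xi)-p(\eta)$.

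Second I would establish the key symbol estimate $|p(\xi)-p(\eta)|\leq|\xi-\eta|$, uniformly in $\mu\in(0,\mu_{\max}]$. Since $p$ is radial, write $p(\xi)=f(|\xi|)$ with $f(r)=r/\sqrt{1+\sqrt\mu\,r}$. A direct computation gives $f'(r)=(1+\sqrt\mu\,r/2)(1+\sqrt\mu\,r)^{-3/2}$, which is positive and bounded above by $1$ for all $r\geq 0$ (since $1+\sqrt\mu r/2\le 1+\sqrt\mu r\le (1+\sqrt\mu r)^{3/2}$). Combined with $\bigl||\xi|-|\eta|\bigr|\leq|\xi-\eta|$ and the mean value theorem applied to $f$, this yields the stated inequality.

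Third, with the bilinear symbol dominated by $|\xi-\eta|\leq\langle\xi-\eta\rangle$, the conclusion follows from a standard Coifman–Meyer bilinear estimate, as recorded in the references cited by the author. Concretely, one decomposes $uv$ via Bony's paraproduct $uv=T_uv+T_vu+R(u,v)$: on $T_uv$ the high-frequency factor is $v$ and $u$ enters through low-frequency blocks, so $[\mathfrak{P},T_u]v$ gains one derivative from symbolic calculus and is bounded by $|\nabla u|_{L^\infty}|v|_{L^2}$; the pieces involving $T_vu$ and $R(u,v)$ are controlled by putting $u$ in $L^\infty$ via Sobolev embedding (using $t_0>d/2$, hence $|u|_{W^{1,\infty}}\lesssim|u|_{H^{t_0+1}}$). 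Altogether this yields
\begin{equation*}
\lver[\mathfrak{P},u]v\rver_{2}\leq C\,\lver u\rver_{H^{t_0+1}}\lver v\rver_{2},
\end{equation*}
which is stronger than the stated bound (the extra $\lver\mathfrak{P}v\rver_{2}$ term on the right-hand side is harmless).

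The main obstacle is checking the Coifman–Meyer step cleanly. The symbol $p$ is of mixed order (behaving like $|\xi|$ at low frequencies and $|\xi|^{1/2}$ at high frequencies), but since our symbol estimate only invokes the uniform upper bound $|f'|\leq 1$ and not any finer structure of $p$, the paraproduct analysis is routine and mirrors the treatment in Alinhac–Gérard and in Lannes' book, to which the author defers.
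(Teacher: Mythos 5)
Your proposal is sound, but note that the paper itself gives no proof of this lemma: it is stated in the appendix as a classical estimate with a pointer to the references (Alinhac--G\'erard, Lannes' book, and Lannes' sharp commutator estimates), where such bounds are obtained by paradifferential methods. Your argument is therefore a genuine self-contained alternative, and its core is correct: the computation $f'(r)=\bigl(1+\tfrac{\sqrt\mu}{2}r\bigr)(1+\sqrt\mu\,r)^{-3/2}\le 1$ does give the Lipschitz symbol bound $\lver p(\xi)-p(\eta)\rver\le\lver\xi-\eta\rver$ uniformly in $\mu$, and the Fourier representation of the commutator is the right starting point. In fact, once you have that bound you do not need Coifman--Meyer or Bony's decomposition at all: by Plancherel and Young's inequality,
\begin{equation*}
\left\lvert \left[\mathfrak{P},u\right]v\right\rvert_{2}\leq C\left\lvert\, \lvert\cdot\rvert\,\hat u\,\right\rvert_{L^{1}}\left\lvert v\right\rvert_{2}\leq C\left\lvert\langle\cdot\rangle^{-t_{0}}\right\rvert_{L^{2}}\left\lvert u\right\rvert_{H^{t_{0}+1}}\left\lvert v\right\rvert_{2},
\end{equation*}
which uses only $t_{0}>\tfrac d2$ and yields the stronger bound (without the $\lvert\mathfrak{P}v\rvert_{2}$ term) with a constant independent of $\mu$. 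This is worth doing, because your third step as written is slightly overstated: the paraproduct/symbolic-calculus route does not follow from the single bound $\lvert f'\rvert\le1$ alone; it requires uniform-in-$\mu$ control of higher symbol derivatives of $p$ (these do hold, e.g. $\lvert\partial_\xi^\alpha p\rvert\lesssim\langle\xi\rangle^{1-\lvert\alpha\rvert}$ uniformly for $\mu\le\mu_{\max}$, but it must be checked, and the claim that $[\mathfrak{P},T_u]$ ``gains one derivative'' rests on it). With the elementary convolution argument this issue disappears entirely, so I would replace the Coifman--Meyer paragraph by the two-line Young estimate; everything else stands.
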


\begin{lemma}\label{commutator_estimate}
\noindent Let $s > \frac{d}{2} + 1$. Then, for $f,g \in L^{2} \left( \RD \right)$,

\upshape
\begin{equation*}
\left\lvert \left[\Lambda^{s}, f \right] g \right\rvert_{2} \leq C \left\lvert f \right\rvert_{H^{s}} \left\lvert g \right\rvert_{H^{s-1}}.
\end{equation*}
\itshape
\end{lemma}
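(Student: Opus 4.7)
The plan is to reduce this to the classical Kato–Ponce commutator estimate
\[
\left\lvert [\Lambda^{s}, f] g \right\rvert_{2} \;\leq\; C \left( \left\lvert \nabla f \right\rvert_{L^{\infty}} \left\lvert g \right\rvert_{H^{s-1}} + \left\lvert f \right\rvert_{H^{s}} \left\lvert g \right\rvert_{L^{\infty}} \right),
\]
and then to close the bound using Sobolev embeddings under the hypothesis $s > d/2 + 1$. Indeed, once the Kato–Ponce inequality is established, the embedding $H^{s-1}(\R^d) \hookrightarrow W^{1,\infty}(\R^d)$ (valid because $s-1 > d/2$) gives $\lver \nabla f \rver_{L^{\infty}} \leq C \lver f \rver_{H^{s}}$ and $\lver g \rver_{L^{\infty}} \leq C \lver g \rver_{H^{s-1}}$, yielding the stated estimate immediately.

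To prove the Kato–Ponce inequality I would use a Littlewood–Paley / Bony paraproduct decomposition. Write $fg = T_{f}g + T_{g}f + R(f,g)$ with $T_{u}v = \sum_{j} S_{j-2}u \, \Delta_{j}v$ and $R(u,v) = \sum_{|j-k|\leq 1} \Delta_{j}u \, \Delta_{k}v$, and split
\[
[\Lambda^{s}, f] g \;=\; \bigl(\Lambda^{s} T_{f} g - T_{f} \Lambda^{s} g\bigr) + \Lambda^{s} T_{g} f + \Lambda^{s} R(f,g) - T_{\Lambda^{s} g} f - R(f, \Lambda^{s} g).
\]
The first piece is a commutator of $\Lambda^{s}$ with a paraproduct whose low-frequency factor is $f$: since $\Lambda^{s}$ acts like a Fourier multiplier localized at the scale of $\Delta_{j}g$, a first-order Taylor expansion of its symbol around the high-frequency side produces an extra factor $|\xi - \eta|$ on the spectrum of $f$. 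This yields a pointwise bound at each dyadic scale of the form $\lver \Delta_{j}\!\bigl(\Lambda^{s} T_{f}g - T_{f}\Lambda^{s}g\bigr) \rver_{2} \lesssim 2^{js} \lver \nabla f \rver_{L^{\infty}} \lver \Delta_{j} g \rver_{2}/2^{j}$, and square-summing gives control by $\lver \nabla f \rver_{L^{\infty}} \lver g \rver_{H^{s-1}}$. The remaining pieces are handled by standard paraproduct estimates: the high-low piece $\Lambda^{s} T_{g} f - T_{\Lambda^{s} g} f$ is bounded by $\lver g \rver_{L^{\infty}} \lver f \rver_{H^{s}}$, and the high-high remainder $\Lambda^{s} R(f,g) - R(f, \Lambda^{s} g)$ is likewise controlled since in $R(f,g)$ both factors live at comparable scales, so again one gains/loses derivatives symmetrically and the result is bounded by $\lver g \rver_{L^{\infty}} \lver f \rver_{H^{s}} + \lver \nabla f \rver_{L^{\infty}} \lver g \rver_{H^{s-1}}$.

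The main obstacle is the commutator piece $\Lambda^{s} T_{f} g - T_{f} \Lambda^{s} g$, where one must exploit the frequency-localization of $\Delta_{j}g$ together with the smoothness of the symbol $\langle \xi \rangle^{s}$ to rigorously extract the gain of one derivative on $f$. This can equivalently be carried out on the Fourier side via the elementary symbol bound $\bigl| \langle \xi \rangle^{s} - \langle \eta \rangle^{s} \bigr| \leq C |\xi - \eta| \bigl(\langle \xi \rangle^{s-1} + \langle \eta \rangle^{s-1}\bigr)$, splitting the integral representation of the commutator according to whether $|\xi - \eta| \leq \tfrac{1}{2}|\eta|$ (paraproduct regime, where $\langle \xi \rangle \sim \langle \eta \rangle$) or not (remainder regime), and then applying Young's convolution inequality. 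Either route gives the Kato–Ponce bound, after which the Sobolev embedding $s - 1 > d/2$ closes the proof.
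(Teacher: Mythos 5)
Your proposal is correct and follows essentially the standard route: the paper does not prove this lemma itself but refers to \cite{alinhac_gerard}, \cite{Lannes_ww} and \cite{Lannes_sharp_estimates}, where precisely this Kato--Ponce commutator estimate is obtained by the Littlewood--Paley/paraproduct argument you sketch and then closed by Sobolev embedding under $s>\frac{d}{2}+1$. One small correction: the embedding you invoke should be $H^{s}(\RD)\hookrightarrow W^{1,\infty}(\RD)$ (equivalently $H^{s-1}(\RD)\hookrightarrow L^{\infty}(\RD)$ applied to $\nabla f$ and to $g$), valid because $s-1>\frac{d}{2}$, rather than $H^{s-1}(\RD)\hookrightarrow W^{1,\infty}(\RD)$, which can fail in this range; the two bounds $\left\lvert \nabla f \right\rvert_{L^{\infty}}\leq C\left\lvert f\right\rvert_{H^{s}}$ and $\left\lvert g\right\rvert_{L^{\infty}}\leq C\left\lvert g\right\rvert_{H^{s-1}}$ that you actually use are nevertheless the correct ones.
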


\begin{lemma}\label{product_estimate}
\noindent Let $s \text{, } s_{1} \text{, } s_{2} \in \R$ such that $s_{1} + s_{2} \geq 0$, $s\leq \min(s_{1},s_{2})$ and $s < s_{1} + s_{2} - \frac{d}{2}$. Then, for $f \in H^{s_{1}}(\RD)$ and $g \in H^{s_{2}}(\RD)$, we have $fg \in H^{s}(\RD)$ and

\begin{equation*}
\left\lvert fg \right\rvert_{H^{s}} \leq C \left\lvert f \right\rvert_{H^{s_{1}}} \left\lvert g \right\rvert_{H^{s_{2}}}.
\end{equation*}

\end{lemma}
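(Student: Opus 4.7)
The plan is to prove this bilinear Sobolev product estimate via the Bony paraproduct decomposition, the standard tool in such estimates. Setting up Littlewood--Paley projectors $\Delta_j$ onto the dyadic shell $\{|\xi| \sim 2^j\}$ and partial sums $S_j = \sum_{k < j} \Delta_k$, I would write
\begin{equation*}
fg = T_f g + T_g f + R(f,g), \quad T_f g = \sum_j S_{j-N_0} f \cdot \Delta_j g, \quad R(f,g) = \sum_{|j-k| \leq N_0} \Delta_j f \cdot \Delta_k g,
\end{equation*}
with $N_0$ a fixed integer, and bound each piece separately in $H^s$. The three hypotheses of the lemma correspond precisely to the convergence of these three pieces, which motivates their joint appearance.

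For the paraproduct $T_f g$: each summand $S_{j-N_0} f \cdot \Delta_j g$ is spectrally localized at frequency $\sim 2^j$, so by almost-orthogonality the $H^s$-norm equals a dyadic $\ell^2$-sum $\bigl(\sum_j 2^{2js} \|S_{j-N_0} f\cdot \Delta_j g\|_{L^2}^2\bigr)^{1/2}$. Controlling $\|S_{j-N_0}f\|_{L^\infty}$ by Bernstein splits into the case $s_1 > d/2$ (uniform bound by $|f|_{H^{s_1}}$) and $s_1 \leq d/2$ (a loss $2^{j(d/2-s_1)^+}$ appears), while $\|\Delta_j g\|_{L^2} \leq c_j 2^{-j s_2}|g|_{H^{s_2}}$ with $(c_j)_j \in \ell^2$. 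Summability of the resulting geometric series follows from $s \leq s_2$ together with $s < s_1 + s_2 - d/2$. The symmetric piece $T_g f$ is handled identically using $s \leq s_1$.

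The main obstacle is the resonant remainder $R(f,g)$, because $\Delta_j f \cdot \Delta_k g$ with $|j-k| \leq N_0$ is only spectrally supported in $\{|\xi| \lesssim 2^j\}$ and not localized away from zero, so that all dyadic scales overlap at low frequencies. Using Bernstein to pass from $L^1$ to $L^2$ gives
\begin{equation*}
\|\Delta_n R(f,g)\|_{L^2} \lesssim 2^{nd/2} \sum_{j \geq n - C} \|\Delta_j f\|_{L^2} \|\Delta_j g\|_{L^2},
\end{equation*}
and weighting by $2^{ns}$, applying Cauchy--Schwarz in the inner sum, produces a geometric series of ratio $2^{-(s_1+s_2-d/2)}$ which converges \emph{exactly} under the strict inequality $s < s_1 + s_2 - d/2$. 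This is where the $d/2$ threshold in the hypothesis enters.

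Gathering the three estimates and invoking the equivalence $|u|_{H^s}^2 \sim \sum_n 2^{2ns}\|\Delta_n u\|_{L^2}^2$ (valid for every $s \in \R$) yields the stated bound. The assumption $s_1 + s_2 \geq 0$ is needed upstream to guarantee that $fg$ makes sense as a tempered distribution in the first place (by duality $H^{s_1} \cdot H^{s_2} \hookrightarrow \mathcal{S}'$), ensuring the whole decomposition is meaningful. No further subtlety is expected beyond careful tracking of the signs of $s$, $s_1$, $s_2$ in the paraproduct estimates.
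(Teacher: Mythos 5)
The paper never proves this lemma: it is listed in Appendix A among ``classical estimates'' with pointers to Alinhac--G\'erard and Lannes, and the proofs in those references are precisely the Littlewood--Paley/Bony paraproduct argument you outline, so you are reconstructing the standard proof rather than diverging from the paper. Your sketch is correct in structure: the two paraproducts are handled by spectral localization plus Bernstein, using $s\le s_2$ (resp. $s\le s_1$), and the resonant term is where the strict inequality $s<s_1+s_2-\frac d2$ is consumed. Two small points deserve sharpening. First, the hypothesis $s_1+s_2\ge 0$ is not only an ``upstream'' condition making the product meaningful as a tempered distribution; it is actively used in the convergence of the remainder: when $s+\frac d2<0$ the factor $2^{(n-j)(s+d/2)}$ grows in $j$, and one instead writes $2^{ns+nd/2}\left\lVert \Delta_j f\right\rVert_{L^2}\left\lVert \Delta_j g\right\rVert_{L^2}\lesssim 2^{n(s+d/2)}2^{-j(s_1+s_2)}\alpha_j\beta_j$ with $\alpha,\beta\in\ell^2$, and it is $s_1+s_2\ge 0$ that lets you take the supremum over $j\ge n-C$ at $j=n-C$, yielding the decaying factor $2^{n(s+d/2-s_1-s_2)}$ whose square-summability over $n\ge -1$ is exactly $s<s_1+s_2-\frac d2$. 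Second, at the endpoint $s_1=\frac d2$ the Bernstein bound on $\left\lVert S_{j-N_0}f\right\rVert_{L^\infty}$ produces a factor of order $j^{1/2}\lver f\rver_{H^{s_1}}$ rather than the claimed $2^{j(d/2-s_1)^+}=1$; this is harmless because the hypotheses then force $s<s_2$ strictly, so the exponential factor $2^{j(s-s_2)}$ absorbs it, but it should be said. With these two clarifications your argument is complete and matches the proofs in the works the paper cites.
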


\noindent We also give a regularity estimate for functions in $H_{\ast}^{-\frac{1}{2}}(\RD)$.

\begin{lemma}\label{H_ast_reg}
\noindent Let $s \geq 0$ and $u \in H_{\ast}^{-\frac{1}{2}}(\RD) \cap H^{s-\frac{1}{2}}(\RD)$. Then $u \in H_{\ast}^{s-\frac{1}{2}}(\RD)$ and

\begin{equation*}
\left\lvert \frac{1}{\mathfrak{P}} u \right\rvert_{H^{s}} \leq \left\lvert \frac{1}{\mathfrak{P}} u \right\rvert_{2} + \left\lvert \sqrt{1+\sqrt{\mu} |D|} u \right\rvert_{H^{s-1}}.
\end{equation*}

\end{lemma}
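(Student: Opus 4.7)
The plan is to reduce the statement to an elementary Fourier-analytic inequality. The starting observation is the algebraic identity
\begin{equation*}
|D|\,\frac{1}{\mathfrak{P}}\,u \;=\; \sqrt{1+\sqrt{\mu}\,|D|}\,u,
\end{equation*}
which is immediate from the definition $\mathfrak{P}=|D|/\sqrt{1+\sqrt{\mu}|D|}$. Setting $f := \frac{1}{\mathfrak{P}}u$, the assumption $u\in H^{-1/2}_{\ast}\cap H^{s-1/2}$ translates into $f\in L^{2}$ and $|D|f\in H^{s-1}$, and the conclusion we have to prove becomes
\begin{equation*}
|f|_{H^{s}} \;\lesssim\; |f|_{2} + \bigl|\,|D|f\bigr|_{H^{s-1}}.
\end{equation*}

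First I would pass to Fourier side. For $s\geq 0$ we have the pointwise bound $(1+|\xi|^{2})^{s}\leq C_{s}\bigl(1+|\xi|^{2s}\bigr)$, so
\begin{equation*}
|f|_{H^{s}}^{2} \;\leq\; C_{s}\Bigl(|f|_{2}^{2} + \int_{\R^{d}} |\xi|^{2s}|\widehat{f}(\xi)|^{2}\,d\xi\Bigr).
\end{equation*}
It remains to control the second integral by $|\,|D|f|_{H^{s-1}}^{2}=\int (1+|\xi|^{2})^{s-1}|\xi|^{2}|\widehat{f}|^{2}d\xi$ (up to a contribution absorbed by $|f|_{2}$). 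I would split $\R^{d}$ into $\{|\xi|\leq 1\}$ and $\{|\xi|>1\}$. On the first region, $|\xi|^{2s}\leq 1$, giving a contribution bounded by $|f|_{2}^{2}$. On the second region, if $s\geq 1$ we have $(1+|\xi|^{2})^{s-1}\geq |\xi|^{2(s-1)}$, while if $0\leq s<1$ we have $(1+|\xi|^{2})^{s-1}\geq (2|\xi|^{2})^{s-1}=2^{s-1}|\xi|^{2(s-1)}$ (since $s-1<0$ and $1+|\xi|^{2}\leq 2|\xi|^{2}$); in both cases, multiplying by $|\xi|^{2}$,
\begin{equation*}
|\xi|^{2s} \;\leq\; 2^{|s-1|}\,(1+|\xi|^{2})^{s-1}\,|\xi|^{2}.
\end{equation*}
Integrating against $|\widehat{f}|^{2}$ yields the desired bound.

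Putting the two pieces together and substituting back $f=u/\mathfrak{P}$ and $|D|f=\sqrt{1+\sqrt{\mu}|D|}\,u$, one recovers exactly
\begin{equation*}
\Bigl|\tfrac{1}{\mathfrak{P}} u\Bigr|_{H^{s}} \;\leq\; \Bigl|\tfrac{1}{\mathfrak{P}} u\Bigr|_{2} + \bigl|\sqrt{1+\sqrt{\mu}|D|}\,u\bigr|_{H^{s-1}},
\end{equation*}
which proves in particular that $u\in H^{s-1/2}_{\ast}(\R^{d})$ (since the right-hand side is finite). There is no real obstacle: the proof is essentially a one-line Fourier multiplier identity followed by a Littlewood–Paley-style low/high frequency split. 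The only mild care needed is treating the two cases $s<1$ and $s\geq 1$ uniformly when comparing $(1+|\xi|^{2})^{s-1}$ with $|\xi|^{2(s-1)}$ at high frequencies, which is handled by the elementary inequality above.
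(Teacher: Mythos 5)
Your approach is fine, but there is nothing in the paper to compare it with: Lemma \ref{H_ast_reg} sits in the appendix of ``classical estimates'' and its proof is explicitly deferred to the cited references, so the paper contains no argument of its own. Your self-contained Fourier-side proof is the standard one: the identity $|D|\frac{1}{\mathfrak{P}}u=\sqrt{1+\sqrt{\mu}|D|}\,u$, the substitution $f=\frac{1}{\mathfrak{P}}u$ (the hypotheses do make $|f|_{2}$ and $\bigl||D|f\bigr|_{H^{s-1}}$ finite, uniformly for $\mu\leq\mu_{\max}$), and the low/high frequency split are exactly what one would find in those references, and each individual step you write is correct.

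One point to fix in the write-up: your last paragraph claims the computation ``recovers exactly'' the displayed inequality, but your chain of bounds produces multiplicative constants ($C_{s}$ from $(1+|\xi|^{2})^{s}\leq C_{s}(1+|\xi|^{2s})$ and $2^{|s-1|}$ from the high-frequency comparison), so what you actually prove is
\begin{equation*}
\left\lvert \frac{1}{\mathfrak{P}} u \right\rvert_{H^{s}} \leq C(s)\left( \left\lvert \frac{1}{\mathfrak{P}} u \right\rvert_{2} + \left\lvert \sqrt{1+\sqrt{\mu} |D|}\, u \right\rvert_{H^{s-1}}\right).
\end{equation*}
This is not a defect of your method: the constant-free form cannot hold verbatim for large $s$, since testing with $\widehat{u}$ concentrated near $|\xi|=t$ reduces it to the pointwise inequality $(1+t^{2})^{s/2}\leq 1+t\,(1+t^{2})^{(s-1)/2}$, which fails for instance when $s=4$ and $t$ is large. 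The estimate with a constant is all the paper ever uses (the lemma only enters inside constants $C(\cdot)$, e.g.\ in Remark \ref{control_omega_bott}), so simply state your conclusion with $C(s)$. Likewise, the membership $u\in H^{s-\frac{1}{2}}_{\ast}(\RD)$ follows from the finiteness of $\bigl\lvert\Lambda^{s}\frac{1}{\mathfrak{P}}u\bigr\rvert_{2}$ only after comparing $\sqrt{1+\sqrt{\mu}\,\lvert\xi\rvert}$ with $(1+\lvert\xi\rvert^{2})^{1/4}$, at the price of a $\mu$-dependent constant; this is harmless since $\mu>0$ is fixed, but it deserves the one line you omitted.
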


\section*{Acknowledgments}

\noindent The author has been partially funded by the ANR project Dyficolti ANR-13-BS01-0003.

\newpage
\small{
\bibliographystyle{plain}
\bibliography{biblio}

\begin{thebibliography}{10}

\bibitem{abz_zaka_euler}
T.~Alazard, N.~Burq, and C.~Zuily.
\newblock The water-wave equations: from {Z}akharov to {E}uler.
\newblock In {\em Studies in phase space analysis with applications to {PDE}s},
  volume~84 of {\em Progr. Nonlinear Differential Equations Appl.}, pages
  1--20. Birkh\"auser/Springer, New York, 2013.

\bibitem{Alinhac_good_unknown_ww}
T.~Alazard and G.~M\'etivier.
\newblock Paralinearization of the {D}irichlet to {N}eumann operator, and
  regularity of three-dimensional water waves.
\newblock {\em Comm. Partial Differential Equations}, 34(10-12):1632--1704,
  2009.

\bibitem{alinhac_gerard}
S.~Alinhac and P.~G\'erard.
\newblock {\em Op\'erateurs pseudo-diff\'erentiels et th\'eor\`eme de
  {N}ash-{M}oser}.
\newblock Savoirs Actuels. [Current Scholarship]. InterEditions, Paris;
  \'Editions du Centre National de la Recherche Scientifique (CNRS), Meudon,
  1991.

\bibitem{Alvarez_Lannes}
B.~Alvarez-Samaniego and D.~Lannes.
\newblock Large time existence for 3{D} water-waves and asymptotics.
\newblock {\em Invent. Math.}, 171(3):485--541, 2008.

\bibitem{Bresch_shallow_water}
D.~Bresch.
\newblock Shallow-water equations and related topics.
\newblock In {\em Handbook of differential equations: evolutionary equations.
  {V}ol. {V}}, Handb. Differ. Equ., pages 1--104. Elsevier/North-Holland,
  Amsterdam, 2009.

\bibitem{Buffoni_stability}
B.~Buffoni, M.~D. Groves, S.~M. Sun, and E.~Wahl{\'e}n.
\newblock Existence and conditional energetic stability of three-dimensional
  fully localised solitary gravity-capillary water waves.
\newblock {\em J. Differential Equations}, 254(3):1006--1096, 2013.

\bibitem{Castro_Lannes_shallow_water}
A.~Castro and D.~Lannes.
\newblock Fully nonlinear long-wave models in the presence of vorticity.
\newblock {\em Journal of Fluid Mechanics}, 759:642--675, 11 2014.

\bibitem{Castro_Lannes_vorticity}
A.~Castro and D.~Lannes.
\newblock Well-posedness and shallow-water stability for a new {H}amiltonian
  formulation of the water waves equations with vorticity.
\newblock {\em Indiana Univ. Math. J.}, 64(4):1169--1270, 2015.

\bibitem{coutand_shkoller_ww}
D.~Coutand and S.~Shkoller.
\newblock Well-posedness of the free-surface incompressible {E}uler equations
  with or without surface tension.
\newblock {\em J. Amer. Math. Soc.}, 20(3):829--930, 2007.

\bibitem{craig_boussinesq}
W.~Craig.
\newblock An existence theory for water waves and the {B}oussinesq and
  {K}orteweg-de {V}ries scaling limits.
\newblock {\em Comm. Partial Differential Equations}, 10(8):787--1003, 1985.

\bibitem{Craig_Sulem_1}
W.~Craig and C.~Sulem.
\newblock Numerical simulation of gravity waves.
\newblock {\em J. Comput. Phys.}, 108(1):73--83, 1993.

\bibitem{Craig_Sulem_2}
W.~Craig, C.~Sulem, and P.-L. Sulem.
\newblock Nonlinear modulation of gravity waves: a rigorous approach.
\newblock {\em Nonlinearity}, 5(2):497--522, 1992.

\bibitem{Dautray_Lions}
R.~Dautray and J.~L. Lions.
\newblock {\em Analyse math\'ematique et calcul num\'erique pour les sciences
  et les techniques. {V}ol. 5}.
\newblock INSTN: Collection Enseignement. [INSTN: Teaching Collection]. Masson,
  Paris, 1988.
\newblock Spectre des op{\'e}rateurs. [The operator spectrum], With the
  collaboration of Michel Artola, Michel Cessenat, Jean Michel Combes and Bruno
  Scheurer, Reprinted from the 1984 edition.

\bibitem{Deny_Lions_beppo_levi}
J.~Deny and J.~L. Lions.
\newblock Les espaces du type de {B}eppo {L}evi.
\newblock {\em Ann. Inst. Fourier, Grenoble}, 5:305--370 (1955), 1954.

\bibitem{ebin_illposedness}
D.~G. Ebin.
\newblock The equations of motion of a perfect fluid with free boundary are not
  well posed.
\newblock {\em Comm. Partial Differential Equations}, 12(10):1175--1201, 1987.

\bibitem{Iguchi_shallow_water}
T.~Iguchi.
\newblock A shallow water approximation for water waves.
\newblock {\em J. Math. Kyoto Univ.}, 49(1):13--55, 2009.

\bibitem{Lannes_wellposedness_ww}
D.~Lannes.
\newblock Well-posedness of the water-waves equations.
\newblock {\em J. Amer. Math. Soc.}, 18(3):605--654 (electronic), 2005.

\bibitem{Lannes_sharp_estimates}
D.~Lannes.
\newblock Sharp estimates for pseudo-differential operators with symbols of
  limited smoothness and commutators.
\newblock {\em J. Funct. Anal.}, 232(2):495--539, 2006.

\bibitem{Lannes_ww}
D.~Lannes.
\newblock {\em The water waves problem}, volume 188 of {\em Mathematical
  Surveys and Monographs}.
\newblock American Mathematical Society, Providence, RI, 2013.
\newblock Mathematical analysis and asymptotics.

\bibitem{lindblad_ww}
H.~Lindblad.
\newblock Well-posedness for the motion of an incompressible liquid with free
  surface boundary.
\newblock {\em Ann. of Math. (2)}, 162(1):109--194, 2005.

\bibitem{Majda_ocean}
A.~Majda.
\newblock {\em Introduction to {PDE}s and waves for the atmosphere and ocean},
  volume~9 of {\em Courant Lecture Notes in Mathematics}.
\newblock New York University, Courant Institute of Mathematical Sciences, New
  York; American Mathematical Society, Providence, RI, 2003.

\bibitem{masmoudi_rousset}
N.~Masmoudi and F.~Rousset.
\newblock Uniform regularity and vanishing viscosity limit for the free surface
  navier-stokes equations.
\newblock {\em Submitted, arXiv: 1202.0657}, 2013.

\bibitem{my_proud_res}
B.~M\'elinand.
\newblock A mathematical study of meteo and landslide tsunamis : The proudman
  resonance.
\newblock {\em Nonlinearity}, 28(11):4037--4080, 2015.

\bibitem{my_long_wave_corio}
B.~M\'elinand.
\newblock Long wave approximation for water waves under a coriolis forcing and
  the ostrovsy equation.
\newblock {\em arXiv:1603.08782}, 2016.

\bibitem{pedlosky}
J.~Pedlosky.
\newblock {\em Geophysical Fluid Dynamics}.
\newblock Springer study edition. Springer New York, 1992.

\bibitem{richard_gravi_shear_flows}
G.~L. Richard and S.~L. Gavrilyuk.
\newblock A new model of roll waves: comparison with {B}rock's experiments.
\newblock {\em J. Fluid Mech.}, 698:374--405, 2012.

\bibitem{schneider_wayne_NLS}
G.~Schneider and C.~E. Wayne.
\newblock Justification of the {NLS} approximation for a quasilinear water wave
  model.
\newblock {\em J. Differential Equations}, 251(2):238--269, 2011.

\bibitem{schneider_wayne_longwave}
G.~Schneider and C.~E. Wayne.
\newblock Corrigendum: {T}he long-wave limit for the water wave problem {I}.
  {T}he case of zero surface tension [mr1780702].
\newblock {\em Comm. Pure Appl. Math.}, 65(5):587--591, 2012.

\bibitem{Rayleigh-Taylor}
G.~Taylor.
\newblock The instability of liquid surfaces when accelerated in a direction
  perpendicular to their planes. {I}.
\newblock {\em Proc. Roy. Soc. London. Ser. A.}, 201:192--196, 1950.

\bibitem{teshukov_shear_flows}
V.~M. Teshukov.
\newblock Gas-dynamic analogy for vortex free-boundary flows.
\newblock {\em Journal of Applied Mechanics and Technical Physics},
  48(3):303--309, 2007.

\bibitem{vilibic_Proudman_resonance}
I.~Vilibic.
\newblock Numerical simulations of the proudman resonance.
\newblock {\em Continental Shelf Research}, 28:574--581, 2008.

\bibitem{Wu_2D}
S.~Wu.
\newblock Well-posedness in {S}obolev spaces of the full water wave problem in
  {$2$}-{D}.
\newblock {\em Invent. Math.}, 130(1):39--72, 1997.

\bibitem{Wu_3D}
S.~Wu.
\newblock Well-posedness in {S}obolev spaces of the full water wave problem in
  3-{D}.
\newblock {\em J. Amer. Math. Soc.}, 12(2):445--495, 1999.

\bibitem{Zakharov}
V.E Zakharov.
\newblock Stability of periodic waves of finite amplitude on the surface of a
  deep fluid.
\newblock {\em J. Applied Mech. Tech. Phys.}, 9:190--194, 1968.

\end{thebibliography}
}

\end{document}